\documentclass[links,onefignum,onetabnum]{siamart220329}

\usepackage{amsfonts,amssymb}

\newsiamremark{assumption}{Assumption}
\crefname{assumption}{Assumption}{Assumptions}

\headers{Zero-Sum Games for Volterra Integral Equations}{M. I. Gomoyunov}

\title{Zero-Sum Games for Volterra Integral Equations and Viscosity Solutions of Path-Dependent Hamilton--Jacobi Equations\thanks{Submitted to the editors April 16, 2024.
\funding{This work was supported by the Russian Science Foundation, project~21-71-10070, \\ \href{https://rscf.ru/en/project/21-71-10070/}{https://rscf.ru/en/project/21-71-10070/}}}}

\author{Mikhail I. Gomoyunov\thanks{N. N. Krasovskii Institute of Mathematics and Mechanics of the Ural Branch of the Russian Academy of Sciences, Ekaterinburg, 620108, Russia, and Ural Federal University, Ekaterinburg, 620002, Russia (\email{m.i.gomoyunov@gmail.com}).}}

\ifpdf
\hypersetup{
  pdftitle={Zero-Sum Games for Volterra Integral Equations},
  pdfauthor={M. I. Gomoyunov}
}
\fi

 \newcommand{\rd}{\mathrm{d}}
 \newcommand{\dist}{\operatorname{dist}}
 \newcommand{\Id}{\mathrm{Id}_n}
 \newcommand{\diam}{\operatorname{diam}}
 \newcommand{\argmin}[1]{\underset{#1}{\operatorname{argmin} \,}}
 \newcommand{\argmax}[1]{\underset{#1}{\operatorname{argmax} \,}}

%\allowdisplaybreaks

\begin{document}

\maketitle

\begin{abstract}
     We consider a game, in which the dynamics is described by a non-linear Volterra integral equation of Hammerstein type with a weakly-singular kernel and the goals of the first and second players are, respectively, to minimize and maximize a given cost functional.
     We propose a way of how the dynamic programming principle can be formalized and the theory of generalized (viscosity) solutions of  path-dependent Hamilton--Jacobi equations can be developed in order to prove the existence of the game value, obtain a characterization of the value functional, and construct players' optimal feedback strategies.
\end{abstract}

\begin{keywords}
    Zero-sum games, Volterra integral equations, dynamic programming principle, Hamilton--Jacobi equations, coinvariant derivatives, value functional, optimal feedback strategies.
\end{keywords}

\begin{MSCcodes}
    45D05, 49L20, 49L25, 49N70
\end{MSCcodes}

\section{Introduction}

    In the theory of zero-sum differential games (see, e.g., \cite{Isaacs_1965,Friedman_1971,Krasovskii_Subbotin_1988,Bardi_Capuzzo-Dolcetta_1997,Fleming_Soner_2006,Yong_2015}), the following two problems are among the most important ones.
    The first problem is to prove the existence of the game value, that is, the coincidence of the lower and upper game values, which are usually defined through players' non-anticipative strategies (also called Elliott--Kalton strategies \cite{Elliott_Kalton_1972}, see also, e.g., \cite{Fleming_Soner_2006,Yong_2015}).
    The second problem is to design methods for constructing players' optimal feedback controls (also called Krasovskii--Subbotin or positional control strategies \cite{Krasovskii_Subbotin_1988,Subbotin_1995}), which allow the players to ensure the achievement of the game value.
    To studying and solving both problems, several approaches were proposed.
    Among others, the well-established one is based on the dynamic programming principle and uses the results from the theory of Hamilton--Jacobi equations and their generalized solutions (for example, in the minimax \cite{Subbotin_1995} or viscosity \cite{Crandall_Lions_1983,Crandall_Evans_Lions_1984} sense).
    In the present paper, we develop this approach in order to investigate the above two problems for a broader class of zero-sum games in which the dynamic equation is a Volterra integral equation.

    Compared to differential games, dynamic games for systems described by Volterra integral equations have been studied quite little in the literature.
    Here, let us note that, in \cite{Pasikov_1986,Pasikov_2015}, a game with linear dynamics is considered and a method for constructing players' optimal positional strategies is proposed in a so-called regular case.
    In \cite{You_1999}, a linear-quadratic game (called ``integral game'' by analogy with the term ``differential game'') is investigated.
    In \cite{Chernov_2012,Chernov_2014}, a game is considered in which each player controls its own system described by a rather general Volterra functional operator equation and the problem of existence of the game value and $\varepsilon$-equilibria in the classes of so-called piecewise open-loop strategies of the players is studied by applying the technique going back to, e.g., \cite[Section 2.3]{Petrosjan_1993}.
    In \cite{Carlson_2017}, the existence of an open-loop Nash equilibria in nonzero-sum games is established.
    Thus, to the best of our knowledge, there has been no research devoted directly to the development of the dynamic programming principle and the theory of Hamilton--Jacobi equations for zero-sum games with the dynamics described by Volterra integral equations.
    This paper aims to contribute to this field.

    In addition, it should be noted that optimal control problems for Volterra integral equations (which can be treated as zero-sum games with a fictitious second player) are quite widely considered in the literature.
    The main theoretical questions are related to necessary optimality conditions for open-loop controls.
    The reader is referred to relatively recent papers \cite{Bonnans_de_la_Vega_Dupuis_2013,Dmitruk_Osmolovski_2017}, where more references to earlier works on this topic can also be found.
    Let us separately dwell on papers \cite{Belbas_2007,Belbas_2008} that touched upon questions related to the dynamic programming principle and the Hamilton--Jacobi equations in such optimal control problems.
    More precisely, in \cite{Belbas_2007}, the difficulties that arise in the case under consideration are indicated and it is proposed, in particular, to take pairs consisting of the current time and the control history up to this time as a so-called ``position'' of the system.
    Let us emphasize that, being applied to a zero-sum game, this approach leads to the fact that the current time and the histories of control actions of both players should be taken as a position of the system.
    Therefore, this does not seem reasonable enough since one player's knowledge of the entire opponent's control history may not be available, especially if the opponent is interpreted as an unknown disturbance, which is often the case in applications.
    In paper \cite{Belbas_2008}, which is a continuation of \cite{Belbas_2007}, the approach based on the reduction of the dynamic equation to an infinite-dimensional system of ordinary differential equations and subsequent application of the dynamic programming principle is developed.
    However, this approach requires certain smoothness properties of the integrand and continuity of controls.
    Furthermore, let us point out that, recently, there has been increased interest in necessary optimality conditions in optimal control problems for weakly-singular Volterra integral equations, where the integrand may have a singularity of power-law type (see, e.g., \cite{Lin_Yong_2020,Han_Lin_Yong_2023,Idczak_2023,Moon_2022,Moon_2023,Gasimov_Asadzade_Mahmudov_2023}).
    This is particularly related to investigations of optimal control problems for systems described by differential equations with fractional-order derivatives.
    So, the results of the present paper also contribute to the field of optimal control problems for both regular and weakly-singular Volterra integral equations.

    More precisely, in this paper, we deal with a finite-horizon game, in which the dynamics is described by a second-kind non-linear Volterra integral equation of Hammerstein type with a kernel $K$, which may have a weak singularity of power-law type, and the goals of the first and second players are, respectively, to minimize and maximize a given cost functional of rather general form.
    For the theory and applications of such integral equations, the reader is referred to, e.g., \cite{Tricomi_1957,Corduneanu_1991,Gorenflo_Vessella_1991,Brunner_2017}.
    Let us note that the considered class of integral equations include, as particular cases, the integral equations that correspond to ordinary differential equations as well as to multi-order Caputo fractional differential equations (see, e.g., \cite{Bourdin_2018}).

    The first objective of the paper is to prove that the game has the value, i.e., to prove that the lower and upper game values defined in the classes of non-anticipative strategies of the players coincide.
    To this end, we follow a rather standard approach mentioned above.
    Nevertheless, each of its steps requires a modification, as is shortly described below.

    At the first step, we introduce a notion of a position of the system as a pair $(t, w(\cdot))$ consisting of a time $t \in [0, T]$ and a function $w \colon [0, t] \to \mathbb{R}^n$ (from a certain function space), treated as a history of a system motion on the time interval $[0, t]$.
    Here, $T > 0$ is the fixed time horizon and $n \in \mathbb{N}$ is the dimension of the system state.
    Since systems described by Volterra integral equations are of hereditary nature (in other words, have a memory effect), such a view on the system positions seems quite natural.
    In this regard, an analogy can be traced with differential games for time-delay (see, e.g., \cite{Osipov_1971,Lukoyanov_2000_PMM_Eng,Kaise_2015}) and fractional-order (see, e.g., \cite{Gomoyunov_2021_Mathematics}) systems.

    At the second step, for every position $(t, w(\cdot))$ chosen as an initial one, we provide a statement of the game and define the lower and upper values of this game.
    Thus, we obtain the lower and upper value functionals defined on the space $\mathcal{G}$ of all system positions.
    The principal point is that these functionals satisfy the equations of the dynamic programming principle.
    Recall that, in a general case, a number of difficulties are associated with the application of the dynamic programming principle if the system is described by a Volterra integral equation.
    Nevertheless, as the present paper shows, it turns out that this can be done under some additional assumptions.
    In this connection, the key assumption made in the paper is that the linear Volterra integral operator with the same kernel $K$ is non-degenerate, i.e., has trivial kernel (some easily verified sufficient conditions for the fulfillment of this assumption are also presented).
    In particular, this assumption is used from the very beginning in order to define the space $\mathcal{G}$ of system positions $(t, w(\cdot))$ and provide the statement of the game for every position $(t, w(\cdot)) \in \mathcal{G}$ in a suitable way.
    Then, after establishing a semigroup property of system motions, the proof of the dynamic programming principle for the lower and upper value functionals can be carried out by rather standard arguments (see, e.g., \cite[Theorem 3.1]{Evans_Souganidis_1984} and also \cite[Chapter XI, Theorem 5.1]{Fleming_Soner_2006}, \cite[Theorem 3.3.5]{Yong_2015}).

    At the third step, for an arbitrary functional defined on the space $\mathcal{G}$, we introduce a notion of coinvariant differentiability ($ci$-differentiability for short).
    This notion depends on the kernel $K$ and generalizes the usual notion of $ci$-differentiability \cite{Kim_1999,Lukoyanov_2000_PMM_Eng} (see also the discussion in \cite[Section 5.2]{Gomoyunov_Lukoyanov_Plaksin_2021}) as well as the notion of fractional $ci$-diffe\-rentiability \cite{Gomoyunov_2020_SIAM}.
    The latter two notions proved to be a useful tool in developing the theory of path-dependent Hamilton--Jacobi equations associated to differential games for time-delay (see, e.g., \cite{Lukoyanov_2000_PMM_Eng,Lukoyanov_2004_PMM_Eng,Kaise_2015,Gomoyunov_Lukoyanov_Plaksin_2021,Plaksin_2021_SIAM,Kaise_2022}) and fractional-order (see, e.g., \cite{Gomoyunov_2021_Mathematics,Gomoyunov_Lukoyanov_2021,Gomoyunov_2023_JDE}) systems.
    In addition, we introduce a notion of a $ci$-smooth functional and obtain a formula for the total derivative of a $ci$-smooth functional along system motions.

    At the fourth step, we consider an abstract path-dependent Hamilton--Jacobi equation with the $ci$-derivatives and formulate a Cauchy problem for this equation and a right-end boundary condition.
    Following \cite{Crandall_Lions_1983,Crandall_Evans_Lions_1984} and, in the path-dependent setting, \cite{Soner_1988,Lukoyanov_2007_IMM_Eng,Gomoyunov_2023_JDE}, we give a definition of a viscosity solution of the Cauchy problem through $ci$-smooth test functionals and a sequence of compact subsets $\mathcal{G}_k$, $k \in \mathbb{N}$, of the space $\mathcal{G}$, each of which is invariant with respect to system motions and the union of which covers the whole space $\mathcal{G}$.
    Furthermore, we require that the restriction of a viscosity solution to each set $\mathcal{G}_k$ should be continuous and should satisfy a certain Lipschitz continuity condition in the functional variable $w(\cdot)$.
    Then, after verifying these continuity properties for the lower and upper value functionals, we derive from the dynamic programming principle and the established formula for the total derivative of $ci$-smooth functionals along system motions that the lower value functional (respectively, the upper value functional) is a viscosity solution of the Cauchy problem for the Hamilton--Jacobi equation with the lower Hamiltonian (respectively, the upper Hamiltonian) and a natural boundary condition in a rather standard way (see, e.g., \cite[Theorem 4.1]{Evans_Souganidis_1984} and also \cite[Chapter XI, Theorem 6.1]{Fleming_Soner_2006}, \cite[Theorem 3.3.6]{Yong_2015}).

    At the final step, we prove a theorem on uniqueness of viscosity solutions of the Cauchy problems under consideration.
    The proof almost completely repeats that of \cite[Theorem 5.1]{Gomoyunov_2023_JDE} (see also \cite{Lukoyanov_2007_IMM_Eng,Plaksin_2021_SIAM}), which is a similar result for Cauchy problems for path-dependent Hamilton--Jacobi equations with fractional $ci$-derivatives.
    The only thing that needs to be done is to establish that the Lyapunov--Krasovskii functional $\nu_\varepsilon$ from \cite{Gomoyunov_2023_JDE}, which is used to construct the required $ci$-smooth test functionals, has appropriate properties.
    As a corollary of this theorem, we find that, under the Isaacs condition (i.e., when the lower and upper Hamiltonians coincide), the game has the value.
    Furthermore, we actually obtain a stronger statement that the value functional is characterized as a unique viscosity solution of the corresponding Cauchy problem.
    This result is a generalization of \cite[Corollary 5.1]{Gomoyunov_2023_JDE} to the more general class of dynamical systems studied in the present paper.

    Non-anticipative strategies, being a convenient tool in theoretical considerations, are rather difficult to implement, especially if one of the players is interpreted as an unknown disturbance.
    In this regard, the second objective of the paper is to propose a way of constructing players' optimal positional strategies, which are more acceptable from a practical point of view.
    To this end, we follow \cite{Garnysheva_Subbotin_1994_Eng} and \cite[Section 12.2]{Subbotin_1995} (see also \cite{Lukoyanov_2004_PMM_Eng} and \cite{Gomoyunov_2021_Mathematics,Gomoyunov_2023_Motor} for the cases for of time-delay and fractional-order systems respectively) and perform some ``smoothing'' transformation of the value functional of the game.
    This allows us to determine certain extremal directions that we then use in the extremal aiming procedure \cite{Krasovskii_Subbotin_1988}.
    The basis for the transformation is again the Lyapunov--Krasovskii functional $\nu_\varepsilon$ from \cite{Gomoyunov_2023_JDE}.

    The paper is organized as follows.
    \Cref{section_statement} presents the statement of the game.
    The additional assumption is formulated and discussed in \cref{section_assumption}.
    In \cref{section_set_of_positions}, the space of system positions $\mathcal{G}$ is defined and the extended problem statement is given.
    In \cref{section_DPP}, the semigroup property of system motions and the dynamical programming equations for the lower and upper value functionals are described.
    The auxiliary sets $\mathcal{G}_k$, $k \in \mathbb{N}$, are defined and their properties are established in \cref{section_sequence}.
    In \cref{section_continuity}, the continuity properties of the lower and upper value functionals are obtained.
    In \cref{section_ci_derivatives}, the new notion of $ci$-differentiability is introduced and the formula for the total derivative of a $ci$-smooth functional is presented.
    In \cref{section_HJ}, the Cauchy problem for the path-dependent Hamilton--Jacobi equation is considered, the definition of a viscosity solution of this problem is given, and the theorem on uniqueness of the viscosity solutions is proved.
    Moreover, it is shown that the lower and upper value functionals are viscosity solutions of the corresponding Cauchy problems, which yields the existence of the game value and the characterization of the value functional.
    In \cref{section_positional}, the classes of players' positional strategies are described.
    In \cref{section_optimal_positional}, the method for constructing the corresponding optimal strategies is proposed and justified.
    Some concluding remarks are given in \cref{section_conclusion}.

\section{Problem Statement}
\label{section_statement}

    Let $n \in \mathbb{N}$ and $T > 0$.
    Let $\mathbb{R}^n$ be the Euclidean space of $n$-dimensional vectors with the inner product $\langle \cdot, \cdot \rangle$ and the norm $\|\cdot\|$, and let $\mathbb{R}^{n \times n}$ be the space of ($n \times n$)-dimensional matrices endowed with the corresponding induced norm, also denoted by $\|\cdot\|$.
    Given $R > 0$, let $B(R)$ be the closed ball in $\mathbb{R}^n$ centered at the origin of radius $R$.
    Let $C([0, T], \mathbb{R}^n)$ be the classical space of continuous functions from $[0, T]$ to $\mathbb{R}^n$ endowed with the norm $\|\cdot\|_{C([0, T], \mathbb{R}^n)}$.
    Denote
    \begin{displaymath}
        \Omega
        \doteq \bigl\{ (\tau, \xi) \in [0, T] \times [0, T] \colon \tau \geq \xi \bigr\},
        \quad \Omega^\circ
        \doteq \bigl\{ (\tau, \xi) \in \Omega \colon \tau > \xi \bigr\}.
    \end{displaymath}

    Consider a {\it dynamical system} described by the Volterra integral equation
    \begin{equation} \label{system}
        x(\tau)
        = y(\tau) + \int_{0}^{\tau} K(\tau, \xi) f(\xi, x(\xi), u(\xi), v(\xi)) \, \rd \xi.
    \end{equation}
    Here, $\tau \in [0, T]$ is time, $T$ is the time horizon;
    $x(\tau) \in \mathbb{R}^n$ is the state of the system at time $\tau$;
    $y(\cdot) \in C([0, T], \mathbb{R}^n)$ is a fixed function;
    $u(\xi) \in P$ and $v(\xi) \in Q$ are, respectively, the controls of the first and second players at time $\xi$,
    $P \subset \mathbb{R}^{n_P}$ and $Q \subset \mathbb{R}^{n_Q}$ are compact sets, $n_P$, $n_Q \in \mathbb{N}$.

    \begin{assumption} \label{assumption_f}
        The function $f \colon [0, T] \times \mathbb{R}^n \times P \times Q \to \mathbb{R}^n$ satisfies the conditions below.

        \noindent (a)
            The function $f$ is continuous.

        \noindent (b)
            For any $R > 0$, there exists $\lambda > 0$ such that
            \begin{displaymath}
                \|f(\tau, x, u, v) - f(\tau, x^\prime, u, v)\|
                \leq \lambda \|x - x^\prime\|
            \end{displaymath}
            for all $\tau \in [0, T]$, $x$, $x^\prime \in B(R)$, $u \in P$, and $v \in Q$.

        \noindent (c)
            There exists $c > 0$ such that
            \begin{displaymath}
                \|f(\tau, x, u, v)\|
                \leq c (1 + \|x\|)
                \quad \forall \tau \in [0, T], \ x \in \mathbb{R}^n, \ u \in P, \ v \in Q.
            \end{displaymath}
    \end{assumption}

    \begin{assumption} \label{assumption_K}
        The kernel $K \colon \Omega^\circ \to \mathbb{R}^{n \times n}$ satisfies the conditions below.

        \noindent (a)
            The function $K$ can be represented in the form
            \begin{equation} \label{K_representation}
                K(\tau, \xi)
                = \frac{K_\ast(\tau, \xi)}{(\tau - \xi)^{1 - \alpha}}
                \quad \forall (\tau, \xi) \in \Omega^\circ
            \end{equation}
            with a continuous function $K_\ast \colon \Omega \to \mathbb{R}^{n \times n}$ and a number $\alpha \in (0, 1)$.

        \noindent (b)
            For the function $K_\ast$ from \cref{K_representation}, there exist $\beta \in (0, 1]$ and $\lambda > 0$ such that
            \begin{displaymath}
                \|K_\ast(\tau, \xi) - K_\ast(\tau, \xi^\prime)\|
                \leq \lambda |\xi - \xi^\prime|^\beta
                \quad \forall (\tau, \xi), (\tau, \xi^\prime) \in \Omega.
            \end{displaymath}
    \end{assumption}

    Note that the representation of $K$ in form \cref{K_representation} is not unique.
    In particular, we assume that $\alpha$ is less than $1$, since if there is such a representation with $\alpha = 1$, we can always take arbitrarily $\alpha^\prime \in (0, 1)$, put $K_\ast^\prime(\tau, \xi) \doteq (\tau - \xi)^{1 - \alpha^\prime} K_\ast(\tau, \xi)$, $(\tau, \xi) \in \Omega$, and get the another representation
    \begin{displaymath}
        K(\tau, \xi)
        = \frac{K_\ast^\prime(\tau, \xi)}{(\tau - \xi)^{1 - \alpha^\prime}}
        \quad \forall (\tau, \xi) \in \Omega^\circ
    \end{displaymath}
    with the function $K_\ast^\prime$ satisfying the required continuity assumptions.

    An additional assumption on $K$ is made in \cref{section_assumption} below (see \cref{assumption_K_2}).

    Let us denote by $\mathcal{U}[0, T]$ and $\mathcal{V}[0, T]$ the sets of (Lebesgue) measurable functions $u \colon [0, T] \to P$ and $v \colon [0, T] \to Q$ respectively.
    Any functions $u(\cdot) \in \mathcal{U}[0, T]$ and $v(\cdot) \in \mathcal{V}[0, T]$ are considered as admissible {\it open-loop controls} on the time interval $[0, T]$ of the first and second players.
    A {\it motion} of system \cref{system} generated by a pair of controls $u(\cdot) \in \mathcal{U}[0, T]$ and $v(\cdot) \in \mathcal{V}[0, T]$ is defined as a function $x(\cdot) \in C([0, T], \mathbb{R}^n)$ that, together with $u(\cdot)$ and $v(\cdot)$, satisfies the integral equation \cref{system} for all $\tau \in [0, T]$.

    \begin{proposition} \label{proposition_existence_uniqueness}
        For any players' controls $u(\cdot) \in \mathcal{U}[0, T]$ and $v(\cdot) \in \mathcal{V}[0, T]$, there exists a unique motion $x(\cdot) \doteq x(\cdot; u(\cdot), v(\cdot))$ of system \cref{system}.
    \end{proposition}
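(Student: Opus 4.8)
The plan is to rewrite \cref{system} as a fixed-point equation $x(\cdot) = \Phi x(\cdot)$ for the nonlinear Volterra operator
\[
    (\Phi x)(\tau)
    \doteq y(\tau) + \int_{0}^{\tau} K(\tau, \xi) f(\xi, x(\xi), u(\xi), v(\xi)) \, \rd \xi,
    \quad \tau \in [0, T],
\]
acting on the Banach space $C([0, T], \mathbb{R}^n)$, and then to apply the contraction mapping principle after a localization step. Put $M \doteq \max_{(\tau, \xi) \in \Omega} \|K_\ast(\tau, \xi)\|$; by \cref{assumption_K}(a) we have $\|K(\tau, \xi)\| \leq M (\tau - \xi)^{\alpha - 1}$ for $(\tau, \xi) \in \Omega^\circ$, and, since $\alpha \in (0, 1)$, the map $\xi \mapsto (\tau - \xi)^{\alpha - 1}$ is integrable on $[0, \tau]$. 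For fixed $u(\cdot) \in \mathcal{U}[0, T]$, $v(\cdot) \in \mathcal{V}[0, T]$ and any $x(\cdot) \in C([0, T], \mathbb{R}^n)$, the integrand $\xi \mapsto f(\xi, x(\xi), u(\xi), v(\xi))$ is measurable (it is the composition of the continuous $f$ with a measurable map) and bounded (by continuity of $f$ and compactness), so $\Phi x(\cdot)$ is well defined.

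First I would derive an a priori bound. If $x(\cdot) \in C([0, T], \mathbb{R}^n)$ is a motion, then \cref{assumption_f}(c) gives
\[
    \|x(\tau)\|
    \leq \Bigl( \|y\|_{C([0, T], \mathbb{R}^n)} + \frac{M c T^\alpha}{\alpha} \Bigr) + M c \int_{0}^{\tau} (\tau - \xi)^{\alpha - 1} \|x(\xi)\| \, \rd \xi,
    \quad \tau \in [0, T],
\]
so the weakly-singular Gronwall--Bellman inequality (the Mittag--Leffler-type estimate for kernels $(\tau - \xi)^{\alpha - 1}$) yields a constant $R_0 > 0$, depending only on $c$, $M$, $T$, $\alpha$, and $\|y\|_{C([0, T], \mathbb{R}^n)}$, such that $\|x(\tau)\| \leq R_0$ for all $\tau \in [0, T]$. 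Next I would truncate: let $\pi \colon \mathbb{R}^n \to B(R_0)$ be the radial retraction ($\pi(x) = x$ if $\|x\| \leq R_0$, $\pi(x) = R_0 x / \|x\|$ otherwise), which is $1$-Lipschitz, and set $\widetilde{f}(\tau, x, u, v) \doteq f(\tau, \pi(x), u, v)$. Then, by \cref{assumption_f}(b), $\widetilde{f}$ is globally Lipschitz in $x$ (uniformly in the other variables) with some constant $\lambda > 0$, and, since $\|\pi(x)\| \leq \|x\|$, it still obeys \cref{assumption_f}(c) with the same $c$. Hence the a priori bound above applies verbatim to the equation with $\widetilde{f}$ in place of $f$; consequently every motion of the latter equation takes values in $B(R_0)$ and therefore coincides with a motion of \cref{system}, and conversely. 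It thus suffices to prove existence and uniqueness for $\Phi$ built from $\widetilde{f}$, which I henceforth assume is globally Lipschitz in $x$ with constant $\lambda$.

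Then I would check that $\Phi$ maps $C([0, T], \mathbb{R}^n)$ into itself and that an iterate of it is a contraction. Continuity of $\Phi x(\cdot)$ reduces to continuity of $\tau \mapsto \int_{0}^{\tau} K(\tau, \xi) g(\xi) \, \rd \xi$ for bounded measurable $g(\cdot)$: for $\tau \leq \tau^\prime$ one splits the integral over $[0, \tau^\prime]$ at $\tau$, bounds the part over $[\tau, \tau^\prime]$ by $M \|g\|_\infty (\tau^\prime - \tau)^\alpha / \alpha$, and treats the part over $[0, \tau]$ by dominated convergence, using the uniform continuity of $K_\ast$ from \cref{assumption_K}(a) together with the $L^1$-continuity of the singular factor in $\tau$. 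For the contraction property, given $x(\cdot), x^\prime(\cdot) \in C([0, T], \mathbb{R}^n)$, the Lipschitz bound on $\widetilde{f}$ yields $\|(\Phi x)(\tau) - (\Phi x^\prime)(\tau)\| \leq M \lambda \int_{0}^{\tau} (\tau - \xi)^{\alpha - 1} \|x(\xi) - x^\prime(\xi)\| \, \rd \xi$; iterating and using that the $m$-fold convolution of $(\tau - \xi)^{\alpha - 1}$ equals $\Gamma(\alpha)^m \Gamma(m\alpha)^{-1} (\tau - \xi)^{m\alpha - 1}$, one obtains
\[
    \bigl\| (\Phi^m x)(\tau) - (\Phi^m x^\prime)(\tau) \bigr\|
    \leq \frac{\bigl( M \lambda \Gamma(\alpha) T^\alpha \bigr)^m}{\Gamma(m \alpha + 1)} \, \|x(\cdot) - x^\prime(\cdot)\|_{C([0, T], \mathbb{R}^n)},
    \quad \tau \in [0, T].
\]
Since $\Gamma(m \alpha + 1) \to \infty$ faster than geometrically, the coefficient tends to $0$, so $\Phi^m$ is a contraction on $C([0, T], \mathbb{R}^n)$ for $m$ large; hence $\Phi$ (whose unique fixed point coincides with that of $\Phi^m$) has a unique fixed point, i.e., \cref{system} has a unique motion $x(\cdot; u(\cdot), v(\cdot))$.

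The only genuinely non-routine part is handling the weak singularity of $K$ on the diagonal $\tau = \xi$ --- concretely, verifying that $\Phi$ preserves continuity and obtaining the $\Gamma(m\alpha + 1)^{-1}$ decay in the iterated estimate. Both are classical in the theory of weakly-singular Volterra integral equations (see, e.g., \cite{Brunner_2017}); once they are in place, \cref{assumption_f} makes the localization and contraction arguments entirely standard.
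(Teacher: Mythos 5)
Your argument is correct, but it takes a genuinely different route from the paper. The paper also sets the problem up as a fixed-point equation for the same operator, but it proves existence via the Leray--Schauder theorem: it verifies that the operator is compact on $C([0,T],\mathbb{R}^n)$ (using the compactness of the linear operator $\mathbf{K}_\infty$ from \cref{operator_K}, quoted from Gorenflo--Vessella) and establishes the a priori Mittag--Leffler bound via the generalized Gronwall inequality, with uniqueness then obtained by a separate weakly-singular Gronwall argument based on \cref{assumption_f}, (b). You instead localize (radial retraction onto the a priori ball $B(R_0)$, which is $1$-Lipschitz and preserves both the Lipschitz and the linear-growth hypotheses) and then run a Weissinger/Picard-type argument, showing that an iterate $\Phi^m$ is a contraction thanks to the convolution identity for the kernels $(\tau-\xi)^{\alpha-1}$ and the $\Gamma(m\alpha+1)^{-1}$ decay; this yields existence and uniqueness in one stroke. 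Your route is more elementary and constructive --- it needs neither the compactness of $\mathbf{K}_\infty$ nor a topological fixed-point theorem, only the Beta-function identity and the continuity of $\tau\mapsto\int_0^\tau K(\tau,\xi)g(\xi)\,\rd\xi$ for bounded measurable $g$ (which is exactly the mapping-into-$C$ part of the result the paper cites, and which your splitting argument re-proves correctly). What the paper's approach buys is that it reuses machinery (the compact operator $\mathbf{K}_\infty$, the generalized Gronwall lemma) that is needed elsewhere in the paper anyway, and it avoids the truncation step; what yours buys is a self-contained proof with an explicit iteration scheme. All the individual steps you sketch --- the $1$-Lipschitz property of the retraction, the equivalence of the truncated and original equations via the shared a priori bound, the $m$-fold convolution estimate, and the passage from a contractive iterate to a unique fixed point of $\Phi$ itself --- are sound.
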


    Despite the fact that \cref{proposition_existence_uniqueness} can be proved by a rather standard arguments, we present the proof below for the reader's convenience.
    Before doing this, let us recall the following result (see, e.g., \cite[Theorem 4.3.2]{Gorenflo_Vessella_1991}), which is valid due to \cref{assumption_K}, (a), and used several times in the paper:
    for every $p \in (1 / \alpha, \infty]$, the {\it linear Volterra integral operator} $\mathbf{K}_p \colon L_p([0, T], \mathbb{R}^n) \to C([0, T], \mathbb{R}^n)$ given by
    \begin{equation} \label{operator_K}
        \mathbf{K}_p [\ell(\cdot)](\tau)
        \doteq \int_{0}^{\tau} K(\tau, \xi) \ell(\xi) \, \rd \xi
        \quad \forall \tau \in [0, T], \ \ell(\cdot) \in L_p([0, T], \mathbb{R}^n)
    \end{equation}
    is well-defined and compact (i.e., $\mathbf{K}_p$ is continuous and maps bounded subsets of $L_p([0, T], \mathbb{R}^n)$ into relatively compact subsets of $C([0, T], \mathbb{R}^n)$).
    Here, $L_p([0, T], \mathbb{R}^n)$ is the classical Lebesgue space of (classes of equivalence) of measurable functions from $[0, T]$ to $\mathbb{R}^n$ endowed with the norm $\|\cdot\|_{L_p([0, T], \mathbb{R}^n)}$.

    \begin{proof}[Proof of \cref{proposition_existence_uniqueness}]
        Fix $u(\cdot) \in \mathcal{U}[0, T]$, $v(\cdot) \in \mathcal{V}[0, T]$ and define the operator $\mathbf{F} \colon C([0, T], \mathbb{R}^n) \to C([0, T], \mathbb{R}^n)$ by
        \begin{displaymath}
            \mathbf{F}[x(\cdot)](\tau)
            \doteq y(\tau) + \int_{0}^{\tau} K(\tau, \xi) f(\xi, x(\xi), u(\xi), v(\xi)) \, \rd \xi
        \end{displaymath}
        for all $\tau \in [0, T]$ and $x(\cdot) \in C([0, T], \mathbb{R}^n)$.
        For every $x(\cdot) \in C([0, T], \mathbb{R}^n)$, the function $\ell(\xi) \doteq f(\xi, x(\xi), u(\xi), v(\xi))$, $\xi \in [0, T]$, is measurable and bounded by \cref{assumption_f}, (a).
        Therefore, taking into account that $\mathbf{F}[x(\cdot)](\tau) = y(\tau) + \mathbf{K}_\infty[\ell(\cdot)](\tau)$, $\tau \in [0, T]$, and $y(\cdot) \in C([0, T], \mathbb{R}^n)$, we find that $\mathbf{F}$ is well-defined.
        Thus, in order to complete the proof, it suffices to show that $\mathbf{F}$ has a unique fixed point.

        The proof of the existence of a fixed point relies on the Leray--Shauder fixed-point theorem (see, e.g., \cite[Theorem 6.A]{Zeidler_1986}).
        Let us verify that $\mathbf{F}$ is compact.
        Suppose that $\{x^{(i)}(\cdot)\}_{i = 1}^\infty \subset C([0, T], \mathbb{R}^n)$, $x^\ast(\cdot) \in C([0, T], \mathbb{R}^n)$, and $\|x^{(i)}(\cdot) - x^\ast(\cdot)\|_{C([0, T], \mathbb{R}^n)} \to 0$ as $i \to \infty$.
        Take $R > 0$ such that $\|x^\ast(\cdot)\|_{C([0, T], \mathbb{R}^n)} \leq R$ and $\|x^{(i)}(\cdot)\|_{C([0, T], \mathbb{R}^n)} \leq R$, $i \in \mathbb{N}$.
        Choose the corresponding number $\lambda$ by \cref{assumption_f}, (b).
        Then, denoting
        \begin{equation} \label{mu_ast}
            \kappa_\ast
            \doteq \max_{(\tau, \xi) \in \Omega} \|K_\ast(\tau, \xi)\|,
        \end{equation}
        we derive
        \begin{equation} \label{proposition_existence_uniqueness_p_continuity}
            \begin{aligned}
                & \|\mathbf{F}[x^{(i)}(\cdot)](\tau) - \mathbf{F}[x^\ast(\cdot)](\tau)\| \\
                & \quad \leq \int_{0}^{\tau} \|K(\tau, \xi)\| \| f(\xi, x^{(i)}(\xi), u(\xi), v(\xi)) - f(\xi, x^\ast(\xi), u(\xi), v(\xi)) \| \, \rd \xi \\
                & \quad \leq \kappa_\ast \lambda \int_{0}^{\tau} \frac{\|x^{(i)}(\xi) - x^\ast(\xi)\|}{(\tau - \xi)^{1 - \alpha}} \, \rd \xi
                \leq \frac{\kappa_\ast \lambda T^\alpha}{\alpha} \|x^{(i)}(\cdot) - x^\ast(\cdot)\|_{C([0, T], \mathbb{R}^n)}
            \end{aligned}
        \end{equation}
        for all $\tau \in [0, T]$ and, hence, $\|\mathbf{F}[x^{(i)}(\cdot)](\cdot) - \mathbf{F}[x^\ast(\cdot)](\cdot)\|_{C([0, T], \mathbb{R}^n)} \to 0$ as $i \to \infty$.
        So, $\mathbf{F}$ is continuous.

        Further, let $M > 0$ and $S \doteq \{x(\cdot) \in C([0, T], \mathbb{R}^n) \colon \|x(\cdot)\|_{C([0, T], \mathbb{R}^n)} \leq M\}$.
        Owing to \cref{assumption_f}, (a), there exists $M^\prime > 0$ such that $\|f(\xi, x(\xi), u(\xi), v(\xi))\| \leq M^\prime$ for all $\xi \in [0, T]$ and $x(\cdot) \in S$.
        Therefore, we obtain the inclusion $\mathbf{F}[S] \subset \{y(\cdot)\} + \mathbf{K}_\infty[E]$, where $E \doteq \{\ell(\cdot) \in L_\infty([0, T], \mathbb{R}^n) \colon \|\ell(\cdot)\|_{L_\infty([0, T], \mathbb{R}^n)} \leq M^\prime\}$.
        Consequently, the set $\mathbf{F}[S]$ is relatively compact in view of compactness of the operator $\mathbf{K}_\infty$.

        Finally, let us show that the required a priory estimate is fulfilled with the number $N \doteq (1 + \|y(\cdot)\|_{C([0, T], \mathbb{R}^n)}) \mathrm{E}_\alpha(\mathrm{\Gamma}(\alpha) \kappa_\ast c T^\alpha) - 1$, where $\kappa_\ast$ is given by \cref{mu_ast}, $c$ is taken from \cref{assumption_f}, (c), $\mathrm{E}_\alpha$ is the Mittag-Leffler function, and $\mathrm{\Gamma}$ is the Euler gamma function.
        Let $x(\cdot) \in C([0, T], \mathbb{R}^n)$ and $\gamma \in (0, 1)$ be such that $x(\cdot) = \gamma \mathbf{F}[x(\cdot)](\cdot)$.
        Then,
        \begin{displaymath}
            \|x(\tau)\|
            \leq \|y(\cdot)\|_{C([0, T], \mathbb{R}^n)}
            + \kappa_\ast c \int_{0}^{\tau} \frac{1 + \|x(\xi)\|}{(\tau - \xi)^{1 - \alpha}} \, \rd \xi
            \quad \forall \tau \in [0, T],
        \end{displaymath}
        wherefrom, by the generalized Gronwall inequality (see, e.g., \cite[Lemma 1.3.13]{Brunner_2017}), we get $\|x(\cdot)\|_{C([0, T], \mathbb{R}^n)} \leq N$.
        As a result, we conclude that $\mathbf{F}$ has a fixed point.

        To prove the uniqueness part, suppose that $x(\cdot)$, $x^\prime(\cdot) \in C([0, T], \mathbb{R}^n)$ are two fixed points of $\mathbf{F}$.
        Consider $R > 0$ such that $\|x(\cdot)\|_{C([0, T], \mathbb{R}^n)} \leq R$, $\|x^\prime(\cdot)\|_{C([0, T], \mathbb{R}^n)} \leq R$ and choose the corresponding number $\lambda$ according to \cref{assumption_f}, (b).
        Arguing similarly to \cref{proposition_existence_uniqueness_p_continuity}, we derive
        \begin{displaymath}
                \|x(\tau) - x^\prime(\tau)\|
                = \|\mathbf{F}[x(\cdot)](\tau) - \mathbf{F}[x^\prime(\cdot)](\tau)\|
                \leq \kappa_\ast \lambda \int_{0}^{\tau} \frac{\|x(\xi) - x^\prime(\xi)\|}{(\tau - \xi)^{1 - \alpha}} \, \rd \xi
        \end{displaymath}
        for all $\tau \in [0, T]$.
        Hence, applying \cite[Lemma 1.3.13]{Brunner_2017} again, we obtain $x(\cdot) = x^\prime(\cdot)$.
    \end{proof}

    Let us point out two {\it particular cases} of the integral equation \cref{system}.
    Suppose that
    \begin{equation} \label{y_constant}
        y(\tau) = y_0
        \quad \forall \tau \in [0, T]
    \end{equation}
    for some $y_0 \in \mathbb{R}^n$.
    Then, if
    \begin{equation} \label{ordinary}
        K(\tau, \xi)
        = \Id
        \quad \forall (\tau, \xi) \in \Omega^\circ,
    \end{equation}
    where $\Id \in \mathbb{R}^{n \times n}$ is the identity matrix, then the integral equation \cref{system} corresponds to the Cauchy problem for the ordinary differential equation
    \begin{displaymath}
        \dot{x}(\tau)
        = f(\tau, x(\tau), u(\tau), v(\tau))
        \quad \text{for a.e. } \tau \in [0, T]
    \end{displaymath}
    under the initial condition $x(0) = y_0$, where $\dot{x}(\tau) \doteq \rd x(\tau) / \rd \tau$.
    As an another example, suppose that, for every $(\tau, \xi) \in \Omega^\circ$, the matrix $K(\tau, \xi)$ is diagonal and its diagonal elements are of the form
    \begin{equation} \label{fractional}
        k_{i, i}(\tau, \xi)
        = \frac{1}{\mathrm{\Gamma}(\alpha_i) (\tau - \xi)^{1 - \alpha_i}}
        \quad \forall i \in \overline{1, n}
    \end{equation}
    for some $\alpha_i \in (0, 1]$, $i \in \overline{1, n}$.
    In this case, the integral equation \cref{system} corresponds to the Cauchy problem for the fractional differential equation
    \begin{displaymath}
        (^C D^{\boldsymbol{\alpha}} x)(\tau)
        = f(\tau, x(\tau), u(\tau), v(\tau))
        \quad \text{for a.e. } \tau \in [0, T]
    \end{displaymath}
    under the initial condition $x(0) = y_0$, where $(^C D^{\boldsymbol{\alpha}} x)(\tau)$ is the Caputo fractional derivative of multi-order $\boldsymbol{\alpha} \doteq \{\alpha_i\}_{i =1}^{n}$ (see, e.g., \cite{Bourdin_2018}).

    For the dynamical system \cref{system}, a {\it game} is studied in which the first player tries to minimize while the second player tries to maximize the {\it cost functional}
    \begin{equation} \label{cost_functional}
        J(u(\cdot), v(\cdot))
        \doteq \sigma(x(\cdot))
        + \int_{0}^{T} \chi(\tau, x(\tau), u(\tau), v(\tau)) \, \rd \tau,
    \end{equation}
    where $u(\cdot) \in \mathcal{U}[0, T]$, $v(\cdot) \in \mathcal{V}[0, T]$, and $x(\cdot) \doteq x(\cdot; u(\cdot), v(\cdot))$ is the system motion.

    \begin{assumption} \label{assumption_sigma_chi}
        For $\sigma \colon C([0, T], \mathbb{R}^n) \to \mathbb{R}$ and $\chi \colon [0, T] \times \mathbb{R}^n \times P \times Q \to \mathbb{R}$, the conditions below are satisfied.

        \noindent (a)
            The function $\chi$ is continuous.

        \noindent (b)
            For every $R > 0$, there exists $\lambda > 0$ such that
            \begin{displaymath}
                |\chi(\tau, x, u, v) - \chi(\tau, x^\prime, u, v)|
                \leq \lambda \|x - x^\prime\|
                \quad \forall \tau \in [0, T], \ x, x^\prime \in B(R), \ u \in P, \ v \in Q.
            \end{displaymath}

        \noindent (c)
            For any compact set $\mathcal{D} \subset C([0, T], \mathbb{R}^n)$, there exists $\lambda > 0$ such that
            \begin{displaymath}
                |\sigma(x(\cdot)) - \sigma(x^\prime(\cdot))|
                \leq \lambda \biggl( \|x(T) - x^\prime(T)\| + \int_{0}^{T} \|x(\tau) - x^\prime(\tau)\| \, \rd \tau \biggr)
                \quad \forall x(\cdot), x^\prime(\cdot) \in \mathcal{D}.
            \end{displaymath}
    \end{assumption}

    In addition, it is supposed that the following assumption holds, which is called the {\it Isaacs condition} in the differential games theory.
    \begin{assumption} \label{assumption_Isaacs}
        For any $\tau \in [0, T]$ and $x$, $s \in \mathbb{R}^n$, the equality below is valid:
        \begin{displaymath}
            \min_{u \in P} \max_{v \in Q} h(\tau, x, u, v, s)
            = \max_{v \in Q} \min_{u \in P} h(\tau, x, u, v, s),
        \end{displaymath}
        where we denote
        \begin{equation} \label{h}
            h(\tau, x, u, v, s)
            \doteq \langle s, f(\tau, x, u, v) \rangle + \chi(\tau, x, u, v)
        \end{equation}
        for all $\tau \in [0, T]$, $x$, $s \in \mathbb{R}^n$, $u \in P$, and $v \in Q$.
    \end{assumption}

    Let us define the lower and upper values of the game \cref{system,cost_functional}.
    For the first player, a {\it non-anticipative strategy} is a mapping $\boldsymbol{a} \colon \mathcal{V}[0, T] \to \mathcal{U}[0, T]$ with the following property: for any $t \in [0, T]$ and any controls $v(\cdot)$, $v^\prime(\cdot) \in \mathcal{V}[0, T]$ of the second player, if the equality $v(\tau) = v^\prime(\tau)$ holds for a.e. $\tau \in [0, t]$, then the corresponding controls $u(\cdot) \doteq \boldsymbol{a}[v(\cdot)](\cdot)$ and $u^\prime(\cdot) \doteq \boldsymbol{a}[v^\prime(\cdot)](\cdot)$ of the first player satisfy the equality $u(\tau) = u^\prime(\tau)$ for a.e. $\tau \in [0, t]$.
    Then, the {\it lower value of the game} is given by
    \begin{equation} \label{lower_value}
        \rho_-^0
        \doteq \inf_{\boldsymbol{a} \in \boldsymbol{\mathcal{A}}[0, T]} \sup_{v(\cdot) \in \mathcal{V}[0, T]}
        J \bigl( \boldsymbol{a}[v(\cdot)](\cdot), v(\cdot) \bigr),
    \end{equation}
    where $\boldsymbol{\mathcal{A}}[0, T]$ is the set of first player's non-anticipative strategies $\boldsymbol{a}$.
    Similarly, a {\it second player's non-anticipative strategy} is a mapping $\boldsymbol{b} \colon \mathcal{U}[0, T] \to \mathcal{V}[0, T]$ such that, for any $t \in [0, T]$ and $u(\cdot)$, $u^\prime(\cdot) \in \mathcal{U}[0, T]$, if $u(\tau) = u^\prime(\tau)$ for a.e. $\tau \in [0, t]$, then $v(\tau) = v^\prime(\tau)$ for a.e. $\tau \in [0, t]$, where $v(\cdot) \doteq \boldsymbol{b}[u(\cdot)](\cdot)$ and $v^\prime(\cdot) \doteq \boldsymbol{b}[u^\prime(\cdot)](\cdot)$.
    So, the {\it upper game value} is
    \begin{equation} \label{upper_value}
        \rho_+^0
        \doteq \sup_{\boldsymbol{b} \in \boldsymbol{\mathcal{B}}[0, T]} \inf_{u(\cdot) \in \mathcal{U}[0, T]} J \bigl( u(\cdot), \boldsymbol{b}[u(\cdot)](\cdot) \bigr),
    \end{equation}
    where $\boldsymbol{\mathcal{B}}[0, T]$ is the set of second player's non-anticipative strategies $\boldsymbol{b}$.
    If the lower and upper game values coincide, it is said that the game \cref{system,cost_functional} has the {\it value}
    \begin{equation} \label{game_value}
        \rho^0
        \doteq \rho_-^0
        = \rho_+^0.
    \end{equation}

\section{Additional assumption}
\label{section_assumption}

    The next assumption on the kernel $K$ from \cref{system} is crucial for the validity of the results obtained in the paper.
    \begin{assumption} \label{assumption_K_2}
        For every fixed $t \in (0, T]$, the linear Volterra integral operator $\mathbf{K}_\infty^{[t]} \colon L_\infty([0, t], \mathbb{R}^n) \to C([0, t], \mathbb{R}^n)$ defined by (see also \cref{operator_K})
        \begin{equation} \label{operator_K_t}
            \mathbf{K}_\infty^{[t]} [\ell(\cdot)](\tau)
            \doteq \int_{0}^{\tau} K(\tau, \xi) \ell(\xi) \, \rd \xi
            \quad \forall \tau \in [0, t], \ \ell(\cdot) \in L_\infty([0, t], \mathbb{R}^n)
        \end{equation}
        has trivial kernel, i.e., if $\ell(\cdot) \in L_\infty([0, t], \mathbb{R}^n)$ and $\mathbf{K}_\infty^{[t]}[\ell(\cdot)](\tau) = 0$ for all $\tau \in [0, t]$, then $\ell(\xi) = 0$ for a.e. $\xi \in [0, t]$.
    \end{assumption}

    The proposition below provides two easily verified sufficient conditions under which \cref{assumption_K_2} is fulfilled.
    \begin{proposition} \label{proposition_K}
        Suppose that the kernel $K$ satisfy \cref{assumption_K}, {\rm (a)}.
        Then, \cref{assumption_K_2} is fulfilled if one of the following conditions is met.

        \noindent {\rm (a)}
            For any $(\tau, \xi) \in \Omega^\circ$ and $i$, $j \in \overline{1, n}$ with $i < j$, the equality $k_{i, j}(\tau, \xi) = 0$ is valid, and, for every $i \in \overline{1, n}$, it holds that $k_{i, i}(\tau, \xi) = p_i (\tau - \xi)$ for all $(\tau, \xi) \in \Omega^\circ$ with some function $p_i \colon (0, T] \to \mathbb{R}$ such that $p_i(\tau) \neq 0$ for a.e. $\tau \in (0, T]$.
            Here, $k_{i, j}(\tau, \xi)$ is the corresponding element of the matrix $K(\tau, \xi)$.

        \noindent {\rm (b)}
            For every $\tau \in [0, T]$, the matrix $K_\ast(\tau, \tau)$ from \cref{K_representation} is non-degenerate, and the partial derivative $\partial K_\ast(\tau, \xi) / \partial \tau$ exists for all $(\tau, \xi) \in \Omega$ and is continuous on $\Omega$.
    \end{proposition}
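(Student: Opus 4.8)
The plan is to verify the two sufficient conditions separately; in each case the point is to reduce the statement to a situation in which injectivity of the Volterra operator is classical. Throughout, I fix $t \in (0, T]$ and a function $\ell(\cdot) \in L_\infty([0, t], \mathbb{R}^n)$ with $\mathbf{K}_\infty^{[t]}[\ell(\cdot)](\tau) = 0$ for all $\tau \in [0, t]$ (note that $\mathbf{K}_\infty^{[t]}[\ell(\cdot)](\cdot) \in C([0, t], \mathbb{R}^n)$ by \cref{assumption_K}, (a)), and aim to conclude that $\ell(\xi) = 0$ for a.e. $\xi \in [0, t]$. Under condition (a) I would use the triangular structure of $K$ together with the Titchmarsh convolution theorem, and under condition (b) I would compose with a fractional integral of the complementary order and then apply the Gronwall inequality.

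For condition (a), I would first note that, since $k_{i, i}(\tau, \xi) = p_i(\tau - \xi)$ while also $k_{i, i}(\tau, \xi) (\tau - \xi)^{1 - \alpha}$ is the $(i, i)$-entry of the continuous function $K_\ast$, the map $r \mapsto p_i(r) r^{1 - \alpha}$ extends continuously to $[0, T]$; hence each $p_i$ is continuous on $(0, T]$ and belongs to $L_1([0, T], \mathbb{R})$, so convolutions with $p_i$ are well defined. Because $k_{i, j} \equiv 0$ for $i < j$, the $i$-th scalar component of the identity $\mathbf{K}_\infty^{[t]}[\ell(\cdot)] \equiv 0$ takes the form $\sum_{j = 1}^{i} \int_0^\tau k_{i, j}(\tau, \xi) \ell_j(\xi) \, \rd \xi = 0$ for all $\tau \in [0, t]$. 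I would then proceed by induction on $i \in \overline{1, n}$: once $\ell_1(\cdot), \dots, \ell_{i - 1}(\cdot)$ are known to vanish a.e. on $[0, t]$ (an empty hypothesis for $i = 1$), the $i$-th component collapses to the scalar convolution equation $\int_0^\tau p_i(\tau - \xi) \ell_i(\xi) \, \rd \xi = 0$, $\tau \in [0, t]$. By the Titchmarsh convolution theorem there exist $\lambda, \mu \geq 0$ with $\lambda + \mu \geq t$ such that $p_i = 0$ a.e. on $[0, \lambda]$ and $\ell_i = 0$ a.e. on $[0, \mu]$; since $p_i \neq 0$ a.e. on $(0, T]$, necessarily $\lambda = 0$, hence $\mu \geq t$ and $\ell_i(\cdot) = 0$ a.e. on $[0, t]$, which closes the induction.

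For condition (b), I would compose the identity $\mathbf{K}_\infty^{[t]}[\ell(\cdot)] \equiv 0$ with $\frac{1}{\mathrm{\Gamma}(1 - \alpha)} \int_0^\tau (\tau - \eta)^{- \alpha} (\cdot) \, \rd \eta$, interchange the order of integration (legitimate by absolute convergence, using $\int_0^1 s^{\alpha - 1} (1 - s)^{- \alpha} \, \rd s = \mathrm{\Gamma}(\alpha) \mathrm{\Gamma}(1 - \alpha) < \infty$ together with the boundedness of $K_\ast$ on the compact set $\Omega$), and substitute $\eta = \xi + s (\tau - \xi)$ to arrive at the first-kind Volterra equation
\[
    \int_0^\tau \mathcal{K}(\tau, \xi) \ell(\xi) \, \rd \xi = 0 \quad \forall \tau \in [0, t],
    \qquad \mathcal{K}(\tau, \xi) \doteq \int_0^1 \frac{K_\ast(\xi + s (\tau - \xi), \xi)}{s^{1 - \alpha} (1 - s)^{\alpha}} \, \rd s.
\]
The advantage of this reduction is that $\mathcal{K}$ is continuous on $\Omega$, satisfies $\mathcal{K}(\tau, \tau) = \mathrm{\Gamma}(\alpha) \mathrm{\Gamma}(1 - \alpha) K_\ast(\tau, \tau)$ — which is non-degenerate for every $\tau$ by hypothesis — and, thanks to the assumed continuity of $\partial K_\ast / \partial \tau$ on $\Omega$, is continuously differentiable in $\tau$ with $\partial \mathcal{K}(\tau, \xi) / \partial \tau = \int_0^1 s^{\alpha} (1 - s)^{- \alpha} (\partial K_\ast / \partial \tau)(\xi + s (\tau - \xi), \xi) \, \rd s$ bounded on $\Omega$. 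Hence $\tau \mapsto \int_0^\tau \mathcal{K}(\tau, \xi) \ell(\xi) \, \rd \xi$ is Lipschitz, and differentiating the displayed identity in $\tau$ (Leibniz's rule, valid since $\mathcal{K}$ is of class $C^1$ in $\tau$ and $\ell(\cdot)$ is bounded) gives $\mathcal{K}(\tau, \tau) \ell(\tau) = - \int_0^\tau (\partial \mathcal{K} / \partial \tau)(\tau, \xi) \ell(\xi) \, \rd \xi$ for a.e. $\tau \in [0, t]$. Multiplying by $\mathcal{K}(\tau, \tau)^{-1}$ and estimating yields $\|\ell(\tau)\| \leq C \int_0^\tau \|\ell(\xi)\| \, \rd \xi$ for a.e. $\tau \in [0, t]$ with a constant $C$ depending only on $\max_{\tau \in [0, t]} \|\mathcal{K}(\tau, \tau)^{-1}\|$ and $\max_{(\tau, \xi) \in \Omega} \|\partial \mathcal{K}(\tau, \xi) / \partial \tau\|$, and the Gronwall inequality then forces $\ell(\cdot) = 0$ a.e. on $[0, t]$.

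I expect the main obstacle to lie in case (b): besides justifying the interchange of integration, one must carefully establish the three properties of the reduced kernel $\mathcal{K}$ — its continuity on $\Omega$, its exact diagonal value $\mathrm{\Gamma}(\alpha) \mathrm{\Gamma}(1 - \alpha) K_\ast(\tau, \tau)$, and its continuous differentiability in $\tau$ with bounded derivative — since exactly these properties make the differentiation step and the Gronwall estimate legitimate, and it is at this point that the hypothesis on $\partial K_\ast / \partial \tau$ is used. In case (a) the only delicate point is that the Titchmarsh dichotomy could a priori throw the vanishing onto the kernel rather than onto $\ell_i$; the assumption that each $p_i$ is a.e. nonzero is precisely what rules this out. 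As an alternative to the self-contained argument in case (b), one could instead invoke known unique-solvability results for first-kind weakly singular Volterra integral equations (see, e.g., \cite{Gorenflo_Vessella_1991}).
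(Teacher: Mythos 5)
Your proof is correct and takes essentially the same route as the paper: for condition (a) you use exactly the paper's argument (the triangular structure of $K$ plus a coordinate-by-coordinate induction with the Titchmarsh convolution theorem, the a.e.\ nonvanishing of $p_i$ ruling out the unwanted branch of the dichotomy), and for condition (b) your composition with the fractional integral of order $1-\alpha$, reduction to a second-kind equation with continuous kernel and invertible diagonal, and the concluding Gronwall estimate is precisely the classical scheme from \cite{Brunner_2017,Gorenflo_Vessella_1991} that the paper invokes by citation without reproducing the details. The only difference is that you carry out that cited scheme for (b) explicitly, which is fine.
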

    \begin{proof}
        Let us show that (a) implies \cref{assumption_K_2}.
        Suppose that $t \in (0, T]$, $\ell(\cdot) \in L_\infty([0, t], \mathbb{R}^n)$, and $\mathbf{K}_\infty^{[t]}[\ell(\cdot)](\tau) = 0$, $\tau \in [0, t]$.
        Then, by the first part of (a),
        \begin{equation} \label{first_sufficient_condition_system}
            \sum_{j = 1}^{i} \int_{0}^{\tau} k_{i, j}(\tau, \xi) \ell_j(\xi) \, \rd \xi
            = 0
            \quad \forall \tau \in [0, t], \ i \in \overline{1, n},
        \end{equation}
        where $\ell_j(\cdot)$ denotes the corresponding coordinate function of $\ell(\cdot)$.
        Substituting $i = 1$ into \cref{first_sufficient_condition_system} and using the second part of (a), we get
        \begin{displaymath}
            \int_{0}^{\tau} p_1 (\tau - \xi) \ell_1(\xi) \, \rd \xi
            = 0
            \quad \forall \tau \in [0, t].
        \end{displaymath}
        Therefore, applying the Titchmarsh convolution theorem (see \cite[Theorem VII]{Titchmarsh_1926} and also, e.g., \cite{Doss_1988}), we obtain $\ell_1(\xi) = 0$ for a.e. $\xi \in [0, t]$.
        Taking this into account and substituting $i = 2$ into \cref{first_sufficient_condition_system}, we derive
        \begin{displaymath}
            \int_{0}^{\tau} p_2 (\tau - \xi) \ell_2(\xi) \, \rd \xi
            = 0
            \quad \forall \tau \in [0, t]
        \end{displaymath}
        and similarly deduce that $\ell_2(\xi) = 0$ for a.e. $\xi \in [0, t]$.
        Continuing this process up to $i = n$, we find that $\ell(\xi) = 0$ for a.e. $\xi \in [0, t]$, and, hence, \cref{assumption_K_2} holds.

        The proof of the fact that (b) implies \cref{assumption_K_2} can be carried out according to scheme from, e.g., \cite[Theorem 1.5.7]{Brunner_2017} (see also \cite[Theorem 5.1.3]{Gorenflo_Vessella_1991}).
    \end{proof}

    In particular, \cref{proposition_K} allows us to conclude that the kernels from \cref{ordinary,fractional} satisfy \cref{assumption_K_2}.

    The role of \cref{assumption_K_2} is to ensure the following property of motions of system \cref{system} (for the exact statement, see \cref{proposition_semigroup_property}).
    Let us take $t \in (0, T)$ and suppose that the players control the system in two steps:
    at the initial time $\tau = 0$, they choose their controls $u(\cdot) \in \mathcal{U}[0, t]$ and $v(\cdot) \in \mathcal{V}[0, t]$ on the time interval $[0, t]$ only;
    after that, at the intermediate time $t$, they choose their controls $u^\prime(\cdot) \in \mathcal{U}[t, T]$ and $v^\prime(\cdot) \in \mathcal{V}[t, T]$ on the remaining time interval $[t, T]$.
    Then, \cref{assumption_K_2} implies that the information about the history of the motion $x(\cdot)$ on $[0, t]$ and about the controls $u^\prime(\cdot)$ and $v^\prime(\cdot)$ is sufficient to correctly determine $x(\cdot)$ on $[t, T]$.

    To illustrate that this property may fail to be valid without \cref{assumption_K_2}, let us consider the dynamical system described by the Volterra integral equation
    \begin{displaymath}
        x(\tau)
        = K(\tau) \int_{0}^{\tau} (u(\xi) + v(\xi)) \, \rd \xi,
    \end{displaymath}
    where $\tau \in [0, T]$, $x(\tau) \in \mathbb{R}$, $u(\xi) \in [- 1, 1]$, $v(\xi) \in [- 1, 1]$, and
    \begin{displaymath}
        K(\tau)
        \doteq
        \begin{cases}
            0 & \forall \tau \in [0, t], \\
            \tau - t & \forall \tau \in (t, T].
        \end{cases}
    \end{displaymath}
    It is clear that, for any players' controls $u(\cdot) \in \mathcal{U}[0, t]$ and $v(\cdot) \in \mathcal{V}[0, t]$, we have $x(\tau) = 0$, $\tau \in [0, t]$, i.e., the history of the motion $x(\cdot)$ on $[0, t]$ does not depend on $u(\cdot)$ and $v(\cdot)$.
    On the other hand, choosing $u^\prime(\tau) \doteq 0$, $v^\prime(\tau) \doteq 0$, $\tau \in [t, T]$, we obtain
    \begin{displaymath}
        x(\tau)
        = (\tau - t) \int_{0}^{t} (u(\xi) + v(\xi)) \, \rd \xi
        \quad \forall \tau \in [t, T],
    \end{displaymath}
    i.e., the motion $x(\cdot)$ on $[t, T]$ already depends on the initial choice of the controls $u(\cdot)$ and $v(\cdot)$.
    Thus, in the considered example, the information about the history of the motion $x(\cdot)$ on $[0, t]$ and about the controls $u^\prime(\cdot)$ and $v^\prime(\cdot)$ is not sufficient to correctly determine the motion $x(\cdot)$ on $[t, T]$, and the additional information about the controls $u(\cdot)$ and $v(\cdot)$ is required.

    Note that another comment regarding \cref{assumption_K_2} is given at the end of \cref{section_optimal_positional} below.

\section{System positions and extended problem statement}
\label{section_set_of_positions}

    Given $t \in (0, T]$, let us introduce the set $\mathcal{W}[0, t]$ consisting of functions $w \colon [0, t] \to \mathbb{R}^n$ for each of which there exists a function $\ell(\cdot) \in L_\infty([0, t], \mathbb{R}^n)$ such that
    \begin{equation} \label{representation_x}
        w(\tau)
        = y(\tau) + \int_{0}^{\tau} K(\tau, \xi) \ell(\xi) \, \rd \xi
        \quad \forall \tau \in [0, t],
    \end{equation}
    where $y(\cdot)$ and $K$ are taken from \cref{system}.
    Equivalently, using the notation from \cref{operator_K_t}, we can define $\mathcal{W}[0, t] = \{y_t(\cdot)\} + \mathbf{K}_\infty^{[t]}[L_\infty([0, t], \mathbb{R}^n)]$.
    Here and below, for a function $x \colon [0, T] \to \mathbb{R}^n$, we denote by $x_t(\cdot)$ its restriction to $[0, t]$, i.e.,
    \begin{equation} \label{x_t}
        x_t(\tau)
        \doteq x(\tau)
        \quad \forall \tau \in [0, t].
    \end{equation}
    In particular, $\mathcal{W}[0, t] \subset C([0, t], \mathbb{R}^n)$.
    Owing to \cref{assumption_K_2}, for every function $w(\cdot) \in \mathcal{W}[0, t]$, the function $\ell(\cdot) \in L_\infty([0, t], \mathbb{R}^n)$ for which representation \cref{representation_x} holds is determined uniquely (up to values taken on a zero Lebesgue measure subset of $[0, t]$), and we denote this function by $\ell(\cdot; t, w(\cdot))$.
    Then, we have
    \begin{equation} \label{representation_via_l_t_w}
        w(\tau)
        = y(\tau) + \int_{0}^{\tau} K(\tau, \xi) \ell(\xi; t, w(\cdot)) \, \rd \xi
        \quad \forall \tau \in [0, T], \ w(\cdot) \in \mathcal{W}[0, t].
    \end{equation}
    For $t = 0$, the set $\mathcal{W}[0, 0]$ is defined as consisting of the single function $w \colon [0, 0] \to \mathbb{R}^n$ such that $w(0) \doteq y(0)$.
    In this case, we formally put $\ell(0; 0, w(\cdot)) \doteq 0$ and observe that formula \cref{representation_via_l_t_w} takes place.

    Now, let $\mathcal{G}$ be the set of pairs $(t, w(\cdot))$ such that $t \in [0, T]$ and $w(\cdot) \in \mathcal{W}[0, t]$, i.e.,
    \begin{displaymath}
        \mathcal{G}
        \doteq \bigcup_{t \in [0, T]} \bigl( \{t\} \times \mathcal{W}[0, t] \bigr).
    \end{displaymath}
    We endow the set $\mathcal{G}$ with the metric
    \begin{displaymath}
        \dist \bigl( (t, w(\cdot)), (t^\prime, w^\prime(\cdot)) \bigr)
        \doteq |t - t^\prime| + \max_{\tau \in [0, T]} \|w(\tau \wedge t) - w^\prime(\tau \wedge t^\prime)\|,
    \end{displaymath}
    where $(t, w(\cdot))$, $(t^\prime, w^\prime(\cdot)) \in \mathcal{G}$ and $a \wedge b \doteq \min\{a, b\}$ for all $a$, $b \in \mathbb{R}$.
    By construction, for any $x(\cdot) \in \mathcal{W}[0, T]$ and $t \in [0, T]$, we get $x_t(\cdot) \in \mathcal{W}[0, t]$ and, hence, $(t, x_t(\cdot)) \in \mathcal{G}$.
    Moreover, it can be verified directly that the following mapping is continuous:
    \begin{equation} \label{mapping_basic}
        [0, T] \times C([0, T], \mathbb{R}^n) \supset [0, T] \times \mathcal{W}[0, T] \ni (t, x(\cdot)) \mapsto (t, x_t(\cdot)) \in \mathcal{G}.
    \end{equation}

    Note that, for any players' controls $u(\cdot) \in \mathcal{U}[0, T]$ and $v(\cdot) \in \mathcal{V}[0, T]$, the motion $x(\cdot) \doteq x(\cdot; u(\cdot), v(\cdot))$ of system \cref{system} satisfies the inclusion $x(\cdot) \in \mathcal{W}[0, T]$ and
    \begin{displaymath}
        \ell(\xi; T, x(\cdot))
        = f(\xi, x(\xi), u(\xi), v(\xi))
        \quad \text{for a.e. } \xi \in [0, T].
    \end{displaymath}
    Consequently, for every $t \in [0, T]$, the history $x_t(\cdot)$ of this motion on the time interval $[0, t]$ belongs to $\mathcal{W}[0, t]$, and, therefore, $(t, x_t(\cdot)) \in \mathcal{G}$.
    In this connection, $\mathcal{G}$ is treated as the {\it space of positions} of system \cref{system}.

    Note also that, in the two particular cases of the integral equation \cref{system} mentioned in \cref{section_statement}, we have $\ell(\xi; T, x(\cdot)) = \dot{x}(\xi)$ for a.e. $\xi \in [0, T]$ if \cref{y_constant,ordinary} hold and $\ell(\xi; T, x(\cdot)) = (^C D^{\boldsymbol{\alpha}} x)(\xi)$ for a.e. $\xi \in [0, T]$ if \cref{y_constant,fractional} are valid.
    In this sense, $\ell(\cdot; T, x(\cdot))$ may also be interpreted as a certain ``derivative'' of $x(\cdot)$.

    Further, let us extend the statement of the game \cref{system,cost_functional} to the case of an arbitrary {\it initial position} $(t, w(\cdot)) \in \mathcal{G}$.
    Define the set of {\it admissible extensions} $x(\cdot)$ of $w(\cdot)$ by
    \begin{equation} \label{X_t_w}
        \mathcal{X}(t, w(\cdot))
        \doteq \bigl\{ x(\cdot) \in \mathcal{W}[0, T] \colon
        x_t(\cdot) = w(\cdot) \bigr\}.
    \end{equation}
    Introduce the function
    \begin{equation} \label{a}
        a(\tau; t, w(\cdot))
        \doteq \begin{cases}
            w(\tau) & \forall \tau \in [0, t), \\
            \displaystyle
            y(\tau) + \int_{0}^{t} K(\tau, \xi) \ell(\xi; t, w(\cdot)) \, \rd \xi & \forall \tau \in [t, T].
          \end{cases}
    \end{equation}
    Observe that $a(\cdot; t, w(\cdot)) \in \mathcal{X}(t, w(\cdot)) \subset \mathcal{W}[0, T]$ and
    \begin{equation} \label{l_a}
        \ell \bigl( \xi; T, a(\cdot; t, w(\cdot)) \bigr)
        = \begin{cases}
            \ell(\xi; t, w(\cdot)) & \mbox{for a.e. } \xi \in [0, t), \\
            0 & \mbox{for a.e. } \xi \in [t, T].
          \end{cases}
    \end{equation}
    In particular, denoting $a^\prime(\cdot) \doteq a(\cdot; t, w(\cdot))$, we have the following equality:
    \begin{equation} \label{semigroup_property}
        a^\prime(\cdot)
        = a(\cdot; \tau, a^\prime_\tau(\cdot))
        \quad \forall \tau \in [t, T].
    \end{equation}
    Any functions $u(\cdot) \in \mathcal{U}[t, T]$ and $v(\cdot) \in \mathcal{V}[t, T]$ are players' {\it open-loop controls} on the time interval $[t, T]$.
    A {\it motion} of system \cref{system} generated from the position $(t, w(\cdot))$ by $u(\cdot) \in \mathcal{U}[t, T]$ and $v(\cdot) \in \mathcal{V}[t, T]$ is a function $x(\cdot) \in C([0, T], \mathbb{R}^n)$ satisfying the initial condition $x_t(\cdot) = w(\cdot)$ (see \cref{x_t}) and the Volterra integral equation
    \begin{equation} \label{system_arbitrary}
        x(\tau)
        = a(\tau; t, w(\cdot)) + \int_{t}^{\tau} K(\tau, \xi) f(\xi, x(\xi), u(\xi), v(\xi)) \, \rd \xi
        \quad \forall \tau \in [t, T].
    \end{equation}
    Equivalently, this motion can be defined as a function $x(\cdot) \in C([0, T], \mathbb{R}^n)$ such that
    \begin{displaymath}
        x(\tau)
        = y(\tau) + \int_{0}^{\tau} K(\tau, \xi) f^\prime(\xi, x(\xi)) \, \rd \xi
        \quad \forall \tau \in [0, T]
    \end{displaymath}
    with the function $f^\prime \colon [0, T] \times \mathbb{R}^n \to \mathbb{R}^n$ given by
    \begin{displaymath}
        f^\prime(\xi, x)
        \doteq \begin{cases}
            \ell(\xi; t, w(\cdot)) & \mbox{for a.e. } \xi \in [0, t), \\
            f(\xi, x, u(\xi), v(\xi)) & \forall \xi \in [t, T],
          \end{cases}
    \end{displaymath}
    where $x \in \mathbb{R}^n$.
    Therefore, arguing similarly to the proof of \cref{proposition_existence_uniqueness}, it can be shown that such a motion $x(\cdot) \doteq x(\cdot; t, w(\cdot), u(\cdot), v(\cdot))$ exists and is unique.
    Note also that, by construction, $x(\cdot) \in \mathcal{X}(t, w(\cdot)) \subset \mathcal{W}[0, T]$ and
    \begin{equation} \label{motion_arbitrary_ell}
        \ell(\xi; T, x(\cdot))
        = \begin{cases}
            \ell(\xi; t, w(\cdot)) & \mbox{for a.e. } \xi \in [0, t), \\
            f(\xi, x(\xi), u(\xi), v(\xi)) & \mbox{for a.e. } \xi \in [t, T].
          \end{cases}
    \end{equation}
    Let us consider the {\it game} in which the first player tries to minimize while the second player tries to maximize the {\it cost functional}
    \begin{equation} \label{cost_functional_arbitrary}
        J(t, w(\cdot), u(\cdot), v(\cdot))
        \doteq \sigma(x(\cdot))
        + \int_{t}^{T} \chi(\tau, x(\tau), u(\tau), v(\tau)) \, \rd \tau,
    \end{equation}
    where $u(\cdot) \in \mathcal{U}[t, T]$, $v(\cdot) \in \mathcal{V}[t, T]$, and $x(\cdot) \doteq x(\cdot; t, w(\cdot), u(\cdot), v(\cdot))$.
    By analogy with \cref{section_statement}, let us introduce the {\it lower} and {\it upper values} of this game by
    \begin{equation} \label{lower_upper_value_functionals}
        \begin{aligned}
            \rho_-(t, w(\cdot))
            & \doteq \inf_{\boldsymbol{a} \in \boldsymbol{\mathcal{A}}[t, T]} \sup_{v(\cdot) \in \mathcal{V}[t, T]}
            J \bigl(t, w(\cdot), \boldsymbol{a}[v(\cdot)](\cdot), v(\cdot) \bigr), \\
            \rho_+(t, w(\cdot))
            & \doteq \sup_{\boldsymbol{b} \in \boldsymbol{\mathcal{B}}[t, T]} \inf_{u(\cdot) \in \mathcal{U}[t, T]} J \bigl(t, w(\cdot), u(\cdot), \boldsymbol{b}[u(\cdot)](\cdot) \bigr).
        \end{aligned}
    \end{equation}

    Relations \cref{lower_upper_value_functionals} actually define the functionals $\rho_- \colon \mathcal{G} \to \mathbb{R}$ and $\rho_+ \colon \mathcal{G} \to \mathbb{R}$, which are called the {\it lower} and {\it upper value functionals} respectively.

    In the particular case where $t = 0$, we have $x(\cdot; u(\cdot), v(\cdot)) = x(\cdot; 0, w(\cdot), u(\cdot), v(\cdot))$ for all $u(\cdot) \in \mathcal{U}[0, T]$ and $v(\cdot) \in \mathcal{V}[0, T]$.
    Hence, the statement of the game given above indeed extends the original statement from \cref{section_statement} and the equalities below hold:
    \begin{equation} \label{values_0}
        \rho_-(0, w(\cdot))
        = \rho_-^0,
        \quad \rho_+(0, w(\cdot))
        = \rho_+^0.
    \end{equation}
    So, in order to prove the existence of the game value (see \cref{game_value}), we will show that the lower and upper value functionals coincide.
    To this end, we will prove that each of these functionals coincides with a unique viscosity solution of a Cauchy problem for a certain path-dependent Hamilton--Jacobi equation.

\section{Semigroup property and dynamic programming principle}
\label{section_DPP}

    The next proposition establishes the semigroup property of system motions.
    \begin{proposition} \label{proposition_semigroup_property}
        Let $(t, w(\cdot)) \in \mathcal{G}$, $u(\cdot) \in \mathcal{U}[t, T]$, and $v(\cdot) \in \mathcal{V}[t, T]$ be fixed and let $x(\cdot) \doteq x(\cdot; t, w(\cdot), u(\cdot), v(\cdot))$ be the corresponding motion of system \cref{system}.
        Suppose that $t^\prime \in [t, T]$, $u^\prime(\cdot) \in \mathcal{U}[t^\prime, T]$, and $v^\prime(\cdot) \in \mathcal{V}[t^\prime, T]$ are given and consider the system motion $x^\prime(\cdot) \doteq x(\cdot; t^\prime, x_{t^\prime}(\cdot), u^\prime(\cdot), v^\prime(\cdot))$.
        Then, $x^\prime(\cdot)$ is the system motion generated from the position $(t, w(\cdot))$ by the players' controls
        \begin{displaymath}
            u^{\prime \prime}(\tau)
            \doteq
            \begin{cases}
                u(\tau) & \forall \tau \in [t, t^\prime), \\
                u^\prime(\tau) & \forall \tau \in [t^\prime, T],
            \end{cases}
            \quad v^{\prime \prime}(\tau)
            \doteq
            \begin{cases}
                v(\tau) & \forall \tau \in [t, t^\prime), \\
                v^\prime(\tau) & \forall \tau \in [t^\prime, T].
            \end{cases}
        \end{displaymath}
        Namely, it holds that $x^\prime(\cdot) = x(\cdot; t, w(\cdot), u^{\prime \prime}(\cdot), v^{\prime \prime}(\cdot))$.
    \end{proposition}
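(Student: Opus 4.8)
The plan is to argue via the ``derivative'' representation $\ell(\cdot\,; T, \cdot)$ rather than directly with \cref{system_arbitrary}, so that the semigroup property reduces to bookkeeping of which formula defines $\ell$ on which subinterval, closed by one uniqueness argument. First I would check that $x^\prime(\cdot)$ is legitimately defined: since $x(\cdot) \in \mathcal{W}[0, T]$, its restriction satisfies $x_{t^\prime}(\cdot) \in \mathcal{W}[0, t^\prime]$, hence $(t^\prime, x_{t^\prime}(\cdot)) \in \mathcal{G}$ and $x^\prime(\cdot) = x(\cdot\,; t^\prime, x_{t^\prime}(\cdot), u^\prime(\cdot), v^\prime(\cdot))$ exists and is unique.

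The central step is to identify $\ell(\cdot\,; t^\prime, x_{t^\prime}(\cdot))$. Writing \cref{representation_via_l_t_w} for $x(\cdot) \in \mathcal{W}[0, T]$ and restricting it to $\tau \in [0, t^\prime]$ shows that the function $\xi \mapsto \ell(\xi; T, x(\cdot))$, $\xi \in [0, t^\prime]$, belongs to $L_\infty([0, t^\prime], \mathbb{R}^n)$ and yields a representation \cref{representation_x} of $w(\cdot) = x_{t^\prime}(\cdot)$; by \cref{assumption_K_2} applied at $t^\prime$ such a function is unique, so $\ell(\xi; t^\prime, x_{t^\prime}(\cdot)) = \ell(\xi; T, x(\cdot))$ for a.e. $\xi \in [0, t^\prime]$. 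Splitting $[0, t^\prime]$ into $[0, t)$ and $[t, t^\prime)$ and invoking the characterization \cref{motion_arbitrary_ell} of $x(\cdot)$ then gives
\[
    \ell(\xi; t^\prime, x_{t^\prime}(\cdot))
    = \begin{cases}
        \ell(\xi; t, w(\cdot)) & \text{for a.e. } \xi \in [0, t), \\
        f(\xi, x(\xi), u(\xi), v(\xi)) & \text{for a.e. } \xi \in [t, t^\prime).
      \end{cases}
\]

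Next I would apply \cref{motion_arbitrary_ell} to $x^\prime(\cdot)$ itself and substitute the formula just obtained. Because $x^\prime_{t^\prime}(\cdot) = x_{t^\prime}(\cdot)$, one has $x^\prime(\xi) = x(\xi)$ for $\xi \in [0, t^\prime]$, so on $[t, t^\prime)$ we may rewrite $f(\xi, x(\xi), u(\xi), v(\xi)) = f(\xi, x^\prime(\xi), u^{\prime\prime}(\xi), v^{\prime\prime}(\xi))$, while on $[t^\prime, T]$ we have $u^{\prime\prime} = u^\prime$, $v^{\prime\prime} = v^\prime$; hence
\[
    \ell(\xi; T, x^\prime(\cdot))
    = \begin{cases}
        \ell(\xi; t, w(\cdot)) & \text{for a.e. } \xi \in [0, t), \\
        f(\xi, x^\prime(\xi), u^{\prime\prime}(\xi), v^{\prime\prime}(\xi)) & \text{for a.e. } \xi \in [t, T].
      \end{cases}
\]
Moreover, restricting $x^\prime_{t^\prime}(\cdot) = x_{t^\prime}(\cdot)$ to $[0, t]$ gives $x^\prime_t(\cdot) = x_t(\cdot) = w(\cdot)$, since $x(\cdot)$ is generated from $(t, w(\cdot))$. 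Plugging the last display into \cref{representation_via_l_t_w} shows that $x^\prime(\cdot)$ satisfies precisely the Volterra equation (in the equivalent form stated right after \cref{system_arbitrary}, with $f^\prime$ built from $\ell(\cdot\,; t, w(\cdot))$ on $[0, t)$ and from $f(\cdot\,, \cdot\,, u^{\prime\prime}, v^{\prime\prime})$ on $[t, T]$) together with the initial condition $x^\prime_t(\cdot) = w(\cdot)$ that uniquely defines $x(\cdot\,; t, w(\cdot), u^{\prime\prime}(\cdot), v^{\prime\prime}(\cdot))$; by uniqueness of such a motion (established as in the proof of \cref{proposition_existence_uniqueness}) we conclude $x^\prime(\cdot) = x(\cdot\,; t, w(\cdot), u^{\prime\prime}(\cdot), v^{\prime\prime}(\cdot))$.

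The degenerate cases ($t^\prime = t$, where the claim reduces to uniqueness; $t^\prime = T$, where the added integral is empty; $t = 0$ or $t^\prime = 0$, handled by the conventions $\mathcal{W}[0, 0] = \{y(0)\}$ and $\ell(0; 0, \cdot) \doteq 0$) are covered by the same argument verbatim. The only genuinely delicate point is the identification of $\ell(\cdot\,; t^\prime, x_{t^\prime}(\cdot))$ with the restriction of $\ell(\cdot\,; T, x(\cdot))$: this is exactly where \cref{assumption_K_2} is indispensable, since without it the history $x_{t^\prime}(\cdot)$ would not determine its ``derivative'' uniquely, and knowledge of $x_{t^\prime}(\cdot)$ and of $u^\prime(\cdot), v^\prime(\cdot)$ would fail to recover $x^\prime(\cdot)$ — precisely as in the counterexample discussed in \cref{section_assumption}.
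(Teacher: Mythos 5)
Your proof is correct and takes essentially the approach the paper intends: the paper offers no detailed argument, stating only that the proposition is straightforward from the results of \cref{section_set_of_positions}, and the ingredients you spell out --- the representation \cref{representation_via_l_t_w}, the characterization \cref{motion_arbitrary_ell}, the uniqueness of $\ell(\cdot\,; t^\prime, x_{t^\prime}(\cdot))$ guaranteed by \cref{assumption_K_2}, and the uniqueness of motions established as in \cref{proposition_existence_uniqueness} --- are exactly those results. No gaps; your identification of $\ell(\cdot\,; t^\prime, x_{t^\prime}(\cdot))$ with the restriction of $\ell(\cdot\,; T, x(\cdot))$ is precisely the point where \cref{assumption_K_2} enters, as you note.
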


    The proof is straightforward due to the results of the previous \cref{section_set_of_positions}.

    Note that, under the conditions of \cref{proposition_semigroup_property}, we obtain (see \cref{cost_functional_arbitrary})
    \begin{displaymath}
        J(t, w(\cdot), u^{\prime \prime}(\cdot), v^{\prime \prime}(\cdot))
        = J(t^\prime, x_{t^\prime}(\cdot), u^\prime(\cdot), v^\prime(\cdot))
        + \int_{t}^{t^\prime} \chi (\tau, x(\tau), u(\tau), v(\tau)) \, \rd \tau.
    \end{displaymath}

    Taking this relation into account and repeating the proof of \cite[Theorem 3.1]{Evans_Souganidis_1984} (see also, e.g., \cite[Chapter XI, Theorem 5.1]{Fleming_Soner_2006} and \cite[Theorem 3.3.5]{Yong_2015}), it can be verified that the lower $\rho_-$ and upper $\rho_+$ value functionals (see \cref{lower_upper_value_functionals}) have the following properties, expressing the dynamic programming principle in the game under consideration.
    \begin{proposition} \label{proposition_DPP}
        Let $(t, w(\cdot)) \in \mathcal{G}$ and $\theta \in [t, T]$.
        Then,
        \begin{displaymath}
            \rho_-(t, w(\cdot)) = \inf_{\boldsymbol{a} \in \boldsymbol{\mathcal{A}}[t, T]} \sup_{v(\cdot) \in \mathcal{V}[t, T]}
            \biggl( \rho_-(\theta, x_\theta(\cdot)) + \int_{t}^{\theta} \chi \bigl( \tau, x(\tau), \boldsymbol{a}[v(\cdot)](\tau), v(\tau) \bigr) \, \rd \tau \biggr),
        \end{displaymath}
        where $x(\cdot) \doteq x(\cdot; t, w(\cdot), \boldsymbol{a}[v(\cdot)](\cdot), v(\cdot))$, and, analogously,
        \begin{displaymath}
            \rho_+(t, w(\cdot)) = \sup_{\boldsymbol{b} \in \boldsymbol{\mathcal{B}}[t, T]} \inf_{u(\cdot) \in \mathcal{U}[t, T]}
            \biggl( \rho_+(\theta, x_\theta(\cdot)) + \int_{t}^{\theta} \chi \bigl( \tau, x(\tau), u(\tau), \boldsymbol{b}[u(\cdot)](\tau) \bigr) \, \rd \tau \biggr),
        \end{displaymath}
        where $x(\cdot) \doteq x(\cdot; t, w(\cdot), u(\cdot), \boldsymbol{b}[u(\cdot)](\cdot))$.
    \end{proposition}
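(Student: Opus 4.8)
The plan is to follow the classical dynamic programming argument of \cite[Theorem 3.1]{Evans_Souganidis_1984}, adapted to the present framework of non-anticipative strategies and positions in $\mathcal{G}$; I treat only $\rho_-$, since the identity for $\rho_+$ follows by interchanging the roles of the players. Write $\widetilde{\rho}_-(t, w(\cdot))$ for the right-hand side of the first claimed equality, and prove separately $\rho_-(t, w(\cdot)) \geq \widetilde{\rho}_-(t, w(\cdot))$ and $\rho_-(t, w(\cdot)) \leq \widetilde{\rho}_-(t, w(\cdot))$. The main structural ingredient beyond the definitions is the additive decomposition recorded right after \cref{proposition_semigroup_property}: for $\theta \in [t, T]$ and controls $u(\cdot) \in \mathcal{U}[t, T]$, $v(\cdot) \in \mathcal{V}[t, T]$ with $x(\cdot) \doteq x(\cdot; t, w(\cdot), u(\cdot), v(\cdot))$, one has $J(t, w(\cdot), u(\cdot), v(\cdot)) = J(\theta, x_\theta(\cdot), u^\theta(\cdot), v^\theta(\cdot)) + \int_t^\theta \chi(\tau, x(\tau), u(\tau), v(\tau)) \, \rd \tau$, where $u^\theta(\cdot)$, $v^\theta(\cdot)$ denote the restrictions of $u(\cdot)$, $v(\cdot)$ to $[\theta, T]$; this combines \cref{proposition_semigroup_property} with the inclusion $(\theta, x_\theta(\cdot)) \in \mathcal{G}$ from \cref{section_set_of_positions}.

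For $\rho_-(t, w(\cdot)) \geq \widetilde{\rho}_-(t, w(\cdot))$ I would fix $\varepsilon > 0$, choose $\boldsymbol{a}^\varepsilon \in \boldsymbol{\mathcal{A}}[t, T]$ that is $\varepsilon$-optimal for $\rho_-(t, w(\cdot))$, and use this same $\boldsymbol{a}^\varepsilon$ as the competitor in the outer infimum defining $\widetilde{\rho}_-$. For a fixed piece $v^0(\cdot) \in \mathcal{V}[t, \theta]$, the motion on $[t, \theta]$ and the values $\boldsymbol{a}^\varepsilon[v(\cdot)](\tau)$ for $\tau \in [t, \theta]$ depend only on $v^0(\cdot)$ by non-anticipativity; let $(\theta, w^0(\cdot))$ be the resulting position. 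Then the map sending $v^1(\cdot) \in \mathcal{V}[\theta, T]$ to the restriction of $\boldsymbol{a}^\varepsilon[v^0(\cdot) \oplus v^1(\cdot)](\cdot)$ to $[\theta, T]$ belongs to $\boldsymbol{\mathcal{A}}[\theta, T]$, so $\rho_-(\theta, w^0(\cdot))$ is at most the supremum over $v^1(\cdot)$ of $J(\theta, w^0(\cdot), \cdot, \cdot)$ evaluated along this strategy; the decomposition formula and the $\varepsilon$-optimality of $\boldsymbol{a}^\varepsilon$ then give $\rho_-(\theta, w^0(\cdot)) + \int_t^\theta \chi \, \rd \tau \leq \rho_-(t, w(\cdot)) + \varepsilon$, the integral over $[t, \theta]$ depending only on $v^0(\cdot)$. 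Passing to the supremum over $v(\cdot) \in \mathcal{V}[t, T]$ yields $\widetilde{\rho}_-(t, w(\cdot)) \leq \rho_-(t, w(\cdot)) + \varepsilon$, and then $\varepsilon \downarrow 0$.

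For the reverse inequality I would fix $\varepsilon > 0$, choose $\boldsymbol{a}^0 \in \boldsymbol{\mathcal{A}}[t, T]$ that is $\varepsilon$-optimal in the outer infimum of $\widetilde{\rho}_-$, and, for every $v^0(\cdot) \in \mathcal{V}[t, \theta]$, choose (by the axiom of choice, no measurable dependence on $v^0(\cdot)$ being required) a strategy $\boldsymbol{a}^1_{v^0} \in \boldsymbol{\mathcal{A}}[\theta, T]$ that is $\varepsilon$-optimal for $\rho_-(\theta, w^0(\cdot))$, where $(\theta, w^0(\cdot))$ is the position reached from $(t, w(\cdot))$ by $\boldsymbol{a}^0$ against $v^0(\cdot)$. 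Concatenating these, i.e. putting $\boldsymbol{a}[v(\cdot)](\tau) \doteq \boldsymbol{a}^0[v(\cdot)](\tau)$ for $\tau \in [t, \theta]$ and $\boldsymbol{a}[v(\cdot)](\tau) \doteq \boldsymbol{a}^1_{v^0}[v^1(\cdot)](\tau)$ for $\tau \in [\theta, T]$, where $v^0(\cdot)$ and $v^1(\cdot)$ are the restrictions of $v(\cdot)$ to $[t, \theta]$ and $[\theta, T]$, produces a map in $\boldsymbol{\mathcal{A}}[t, T]$; against an arbitrary $v(\cdot) \in \mathcal{V}[t, T]$, the decomposition formula together with the two $\varepsilon$-optimality estimates gives $J(t, w(\cdot), \boldsymbol{a}[v(\cdot)](\cdot), v(\cdot)) \leq \widetilde{\rho}_-(t, w(\cdot)) + 2 \varepsilon$, hence $\rho_-(t, w(\cdot)) \leq \widetilde{\rho}_-(t, w(\cdot)) + 2 \varepsilon$, and $\varepsilon \downarrow 0$ completes the argument. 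The identity for $\rho_+$ is proved symmetrically, with $\boldsymbol{\mathcal{B}}[\cdot, \cdot]$ and $\mathcal{U}[\cdot, \cdot]$ replacing $\boldsymbol{\mathcal{A}}[\cdot, \cdot]$ and $\mathcal{V}[\cdot, \cdot]$.

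The main obstacle is organizational rather than analytic: one must verify carefully that the concatenated map $\boldsymbol{a}$ is genuinely non-anticipative --- distinguishing whether the time up to which two second-player controls coincide lies before or after $\theta$, and using non-anticipativity of $\boldsymbol{a}^0$, of each $\boldsymbol{a}^1_{v^0}$, and the fact that $\boldsymbol{a}^1_{v^0}$ was indexed by the restriction of $v(\cdot)$ to $[t, \theta]$ --- and that the intermediate position $(\theta, w^0(\cdot))$ for which $\boldsymbol{a}^1_{v^0}$ was picked $\varepsilon$-optimal genuinely agrees with the position reached by $\boldsymbol{a}$; this last matching is precisely where the semigroup property \cref{proposition_semigroup_property}, and hence \cref{assumption_K_2}, enters, as it ensures the motion on $[\theta, T]$ is determined by $(\theta, w^0(\cdot))$ and the controls on $[\theta, T]$ alone.
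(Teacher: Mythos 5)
Your proposal is correct and follows essentially the same route as the paper, which does not spell out the argument but simply combines the semigroup property of \cref{proposition_semigroup_property} and the resulting additive cost decomposition with the standard two-sided $\varepsilon$-optimal-strategy argument of \cite[Theorem 3.1]{Evans_Souganidis_1984} --- exactly the restriction/concatenation scheme you describe. The only point to handle with care (and which you already flag) is that the ``inner'' strategies $\boldsymbol{a}^1_{v^0}$ should be indexed by the a.e.-equivalence class of $v^0(\cdot)$ so that the concatenated map is genuinely non-anticipative; with that routine adjustment the proof is complete.
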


\section{Auxiliary sequence of compact sets}
\label{section_sequence}

    For every $k \in \mathbb{N}$, consider the set
    \begin{equation} \label{X_k}
        \mathcal{X}_k
        \doteq \bigl\{ x(\cdot) \in \mathcal{W}[0, T] \colon
        \| \ell(\xi; T, x(\cdot)) \|
        \leq k c (1 + \|x(\xi)\|) \text{ for a.e. } \xi \in [0, T] \bigr\},
    \end{equation}
    where the function $\ell(\cdot; T, x(\cdot))$ is determined by the function $x(\cdot)$ according to \cref{representation_via_l_t_w} and $c$ is taken from \cref{assumption_f}, (c).
    Since $y(\cdot) \in \mathcal{W}[0, T]$ and $\ell(\xi; T, y(\cdot)) = 0$ for a.e. $\xi \in [0, T]$, we get $y(\cdot) \in \mathcal{X}_k$, and, therefore, $\mathcal{X}_k$ is not empty.

    \begin{proposition} \label{proposition_X_k}
        For every $k \in \mathbb{N}$, the set $\mathcal{X}_k$ is compact in $C([0, T], \mathbb{R}^n)$.
    \end{proposition}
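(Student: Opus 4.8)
The plan is to show that $\mathcal{X}_k$ is a closed, bounded, and equicontinuous subset of $C([0, T], \mathbb{R}^n)$, so that compactness follows from the Arzel\`a--Ascoli theorem. The structure of the set $\mathcal{X}_k$ is that of an image under the affine map $\ell(\cdot) \mapsto y(\cdot) + \mathbf{K}_\infty[\ell(\cdot)](\cdot)$ of those $\ell(\cdot) \in L_\infty([0, T], \mathbb{R}^n)$ satisfying the pointwise growth bound $\|\ell(\xi)\| \leq k c (1 + \|x(\xi)\|)$, where $x(\cdot)$ is the image itself. So the first task is to extract, for every $x(\cdot) \in \mathcal{X}_k$, a uniform a priori bound on $\|x(\cdot)\|_{C([0, T], \mathbb{R}^n)}$, and then a uniform bound on $\|\ell(\cdot; T, x(\cdot))\|_{L_\infty([0, T], \mathbb{R}^n)}$.

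First I would establish the uniform norm bound. For $x(\cdot) \in \mathcal{X}_k$, writing $\ell(\cdot) \doteq \ell(\cdot; T, x(\cdot))$ and using \cref{representation_via_l_t_w}, \cref{K_representation}, and the notation $\kappa_\ast$ from \cref{mu_ast}, one gets
\begin{displaymath}
    \|x(\tau)\|
    \leq \|y(\cdot)\|_{C([0, T], \mathbb{R}^n)} + \kappa_\ast k c \int_{0}^{\tau} \frac{1 + \|x(\xi)\|}{(\tau - \xi)^{1 - \alpha}} \, \rd \xi
    \quad \forall \tau \in [0, T],
\end{displaymath}
and the generalized Gronwall inequality (\cite[Lemma 1.3.13]{Brunner_2017}, exactly as in the proof of \cref{proposition_existence_uniqueness}) yields a constant $N_k$, depending only on $k$, $\kappa_\ast$, $c$, $\alpha$, $T$, and $\|y(\cdot)\|_{C([0, T], \mathbb{R}^n)}$, such that $\|x(\cdot)\|_{C([0, T], \mathbb{R}^n)} \leq N_k$. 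Consequently $\|\ell(\xi; T, x(\cdot))\| \leq k c (1 + N_k) =: M_k$ for a.e. $\xi \in [0, T]$, uniformly over $x(\cdot) \in \mathcal{X}_k$. Thus $\mathcal{X}_k \subset \{y_T(\cdot)\} + \mathbf{K}_\infty[E_k]$, where $E_k \doteq \{\ell(\cdot) \in L_\infty([0, T], \mathbb{R}^n) \colon \|\ell(\cdot)\|_{L_\infty([0, T], \mathbb{R}^n)} \leq M_k\}$ is a bounded subset of $L_\infty([0, T], \mathbb{R}^n)$. By the compactness of the linear Volterra operator $\mathbf{K}_\infty$ recalled after \cref{proposition_existence_uniqueness}, the set $\{y_T(\cdot)\} + \mathbf{K}_\infty[E_k]$ is relatively compact in $C([0, T], \mathbb{R}^n)$; in particular $\mathcal{X}_k$ is bounded and equicontinuous, hence relatively compact.

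It remains to show that $\mathcal{X}_k$ is closed. Suppose $\{x^{(i)}(\cdot)\}_{i=1}^{\infty} \subset \mathcal{X}_k$ converges to some $x^\ast(\cdot)$ in $C([0, T], \mathbb{R}^n)$. Each $x^{(i)}(\cdot) = y(\cdot) + \mathbf{K}_\infty[\ell^{(i)}(\cdot)](\cdot)$ with $\ell^{(i)}(\cdot) \doteq \ell(\cdot; T, x^{(i)}(\cdot)) \in E_k$. Since $E_k$ is bounded in $L_\infty([0, T], \mathbb{R}^n) = L_1([0, T], \mathbb{R}^n)^\ast$, it is weak-$\ast$ sequentially compact, so along a subsequence $\ell^{(i)}(\cdot) \to \ell^\ast(\cdot)$ weakly-$\ast$ for some $\ell^\ast(\cdot) \in E_k$. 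Passing to the weak-$\ast$ limit in the identity $x^{(i)}(\tau) = y(\tau) + \int_0^\tau K(\tau, \xi) \ell^{(i)}(\xi) \, \rd \xi$ (for each fixed $\tau$, the function $\xi \mapsto K(\tau, \xi)$ restricted to $[0, \tau]$ lies in $L_1$ by \cref{K_representation}) gives $x^\ast(\tau) = y(\tau) + \int_0^\tau K(\tau, \xi) \ell^\ast(\xi) \, \rd \xi$, so $x^\ast(\cdot) \in \mathcal{W}[0, T]$ and, by uniqueness from \cref{assumption_K_2}, $\ell(\cdot; T, x^\ast(\cdot)) = \ell^\ast(\cdot)$. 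The main obstacle is to verify that $\ell^\ast(\cdot)$ inherits the pointwise growth bound $\|\ell^\ast(\xi)\| \leq k c (1 + \|x^\ast(\xi)\|)$ a.e., since this is not a weak-$\ast$ closed condition in the naive sense; the clean way is to observe that the set $\{\ell(\cdot) \in L_\infty \colon \|\ell(\xi)\| \leq k c(1 + \|x^\ast(\xi)\|) \text{ a.e.}\}$ is convex and strongly closed, hence weak-$\ast$ closed (being the intersection over test functions of weak-$\ast$ closed half-spaces, or via Mazur's lemma applied to convex combinations), so it suffices to know each $\ell^{(i)}$ satisfies the bound with $x^{(i)}$ in place of $x^\ast$ and then pass to the limit using the uniform convergence $x^{(i)}(\cdot) \to x^\ast(\cdot)$ to control the right-hand side. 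This yields $x^\ast(\cdot) \in \mathcal{X}_k$, completing the proof.
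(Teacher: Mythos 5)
Your proposal is correct, and its first half (the Gronwall a priori bound, the inclusion $\mathcal{X}_k \subset \{y(\cdot)\} + \mathbf{K}_\infty[E_k]$, and relative compactness from compactness of $\mathbf{K}_\infty$) coincides with the paper's argument. The closedness step, however, takes a genuinely different functional-analytic route: you work with weak-$\ast$ convergence in $L_\infty = (L_1)^\ast$, identify the limit pointwise in $\tau$ by pairing with $K(\tau, \cdot) \in L_1([0, \tau])$, and deduce the growth bound from weak-$\ast$ closedness of the ($\varepsilon$-enlarged) constraint set; the paper instead fixes $p \in (1/\alpha, \infty)$, uses weak sequential compactness of $E_k$ in the reflexive space $L_p$, identifies the limit via compactness of $\mathbf{K}_p$ (getting uniform convergence), and obtains the pointwise bound through Mazur's lemma, passing to a.e.\ convergent convex combinations. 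Both routes hinge on the same trick of enlarging the bound to $k c (1 + \|x^\ast(\xi)\| + \varepsilon)$ using uniform convergence of $x^{(i)}(\cdot)$ and then letting $\varepsilon \to 0^+$; yours avoids introducing $\mathbf{K}_p$ at all, while the paper's avoids any weak-$\ast$ duality considerations. Two cautions on your justification: the general implication ``convex and norm-closed, hence weak-$\ast$ closed'' is false in a dual space (e.g.\ $c_0$ inside $\ell_\infty$), and Mazur's lemma does not apply to weak-$\ast$ convergent sequences (convex combinations of such a sequence need not converge in norm), so that parenthetical alternative does not work as stated in your setting; what does work, and what you correctly indicate first, is that the set $\{\ell(\cdot) \in L_\infty \colon \|\ell(\xi)\| \leq g(\xi) \text{ a.e.}\}$ with $g$ continuous is precisely the intersection over $\phi(\cdot) \in L_1$ of the weak-$\ast$ closed half-spaces $\{\ell(\cdot) \colon \int_0^T \langle \ell(\xi), \phi(\xi) \rangle \, \rd \xi \leq \int_0^T g(\xi) \|\phi(\xi)\| \, \rd \xi\}$, so it is weak-$\ast$ closed. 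With that justification spelled out (and weak-$\ast$ sequential compactness of $E_k$ resting on separability of $L_1$), your proof is complete.
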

    \begin{proof}
        Take $\kappa_\ast$ from \cref{mu_ast} and put
        \begin{displaymath}
            R_k
            \doteq (1 + \|y(\cdot)\|_{C([0, T], \mathbb{R}^n)}) \mathrm{E}_\alpha(\mathrm{\Gamma}(\alpha) \kappa_\ast k c T^\alpha) - 1,
            \quad M_k
            \doteq k c (1 + R_k).
        \end{displaymath}
        Due to \cref{representation_via_l_t_w,X_k}, for every $x(\cdot) \in \mathcal{X}_k$, we have
        \begin{displaymath}
            \begin{aligned}
                \|x(\tau)\|
                & \leq \|y(\tau)\| + \int_{0}^{\tau} \|K(\tau, \xi)\| \|\ell(\xi; T, x(\cdot))\| \, \rd \xi \\
                & \leq \|y(\cdot)\|_{C([0, T], \mathbb{R}^n)}
                + \kappa_\ast k c \int_{0}^{\tau} \frac{1 + \|x(\xi)\|}{(\tau - \xi)^{1 - \alpha}} \, \rd \xi
                \quad \forall \tau \in [0, T].
            \end{aligned}
        \end{displaymath}
        Therefore, applying the generalized Gronwall inequality (see, e.g., \cite[Lemma 1.3.13]{Brunner_2017}), we derive $\|x(\cdot)\|_{C([0, T], \mathbb{R}^n)} \leq R_k$ and, consequently, $\|\ell(\cdot; T, x(\cdot))\|_{L_\infty([0, T], \mathbb{R}^n)} \leq M_k$.
        Then, recalling definition \cref{operator_K} of the operator $\mathbf{K}_\infty$ and denoting
        \begin{equation} \label{E_k}
            E_k
            \doteq \bigl\{ \ell(\cdot) \in L_\infty([0, T], \mathbb{R}^n) \colon
            \|\ell(\cdot)\|_{L_\infty([0, T], \mathbb{R}^n)}
            \leq M_k \bigr\},
        \end{equation}
        we find that $\mathcal{X}_k \subset \{y(\cdot)\} + \mathbf{K}_\infty[E_k]$.
        Hence, compactness of $\mathbf{K}_\infty$ implies that $\mathcal{X}_k$ is relatively compact in $C([0, T], \mathbb{R}^n)$, and, thus, it remains to prove that $\mathcal{X}_k$ is closed.

        Let $\{x^{(i)}(\cdot)\}_{i = 1}^\infty \subset \mathcal{X}_k$, $x^\ast(\cdot) \in C([0, T], \mathbb{R}^n)$, and $\|x^{(i)}(\cdot) - x^\ast(\cdot)\|_{C([0, T], \mathbb{R}^n)} \to 0$ as $i \to \infty$.
        Denote $\ell^{(i)}(\cdot) \doteq \ell(\cdot; T, x^{(i)}(\cdot))$, $i \in \mathbb{N}$.
        Fix $p \in (1 / \alpha, \infty)$ and observe that the set $E_k$ is weakly sequentially compact as a subset of $L_p([0, T], \mathbb{R}^n)$ (see, e.g., the proof of \cite[Lemma 4.1]{Gomoyunov_2018_FCAA}).
        So, we may assume, by extracting a subsequence if necessary, that the sequence $\{\ell^{(i)}(\cdot)\}_{i = 1}^\infty$ converges weakly in $L_p([0, T], \mathbb{R}^n)$ to some function $\overline{\ell}(\cdot) \in E_k$.
        In view of compactness of the operator $\mathbf{K}_p$ from \cref{operator_K}, we get
        \begin{displaymath}
            \|x^{(i)}(\cdot) - y(\cdot) - \mathbf{K}_p [\overline{\ell}(\cdot)](\cdot)\|_{C([0, T], \mathbb{R}^n)}
            = \|\mathbf{K}_p [\ell^{(i)}(\cdot)](\cdot) - \mathbf{K}_p [\overline{\ell}(\cdot)](\cdot)\|_{C([0, T], \mathbb{R}^n)}
            \to 0
        \end{displaymath}
        as $i \to \infty$.
        Consequently, $x^\ast(\cdot) = y(\cdot) + \mathbf{K}_p [\overline{\ell}(\cdot)](\cdot)$, which yields  $x^\ast(\cdot) \in \mathcal{W}[0, T]$ and $\overline{\ell}(\cdot) = \ell(\cdot; T, x^\ast(\cdot))$.

        Fix $\varepsilon > 0$ and choose $i_\ast \in \mathbb{N}$ such that $\|x^{(i)}(\cdot) - x^\ast(\cdot)\|_{C([0, T], \mathbb{R}^n)} \leq \varepsilon$ for all $i \geq i_\ast$.
        Then, for every $i \geq i_\ast$, we obtain
        \begin{displaymath}
            \|\ell^{(i)}(\xi)\|
            \leq k c (1 + \|x^{(i)}(\xi)\|)
            \leq k c (1 + \|x^\ast(\xi)\| + \varepsilon)
            \quad \text{for a.e. } \xi \in [0, T].
        \end{displaymath}
        Let $j \in \mathbb{N}$.
        Since $\{\ell^{(i)}(\cdot)\}_{i = i_\ast}^\infty$ converges weakly in $L_p([0, T], \mathbb{R}^n)$ to $\overline{\ell}(\cdot)$, there exists (see, e.g., \cite[Theorem 3.13]{Rudin_1991}) a convex combination
        \begin{equation} \label{convex_combination_1}
            m^{(j)}(\cdot)
            \doteq \sum_{p = 1}^{q_j} \gamma_{p, j} \ell^{(i_{p, j})}(\cdot)
        \end{equation}
        such that $\|m^{(j)}(\cdot) - \overline{\ell}(\cdot)\|_{L_p([0, T], \mathbb{R}^n)} \leq 1 / j$.
        Here, $q_j \in \mathbb{N}$, $i_{p, j} \geq i_\ast$ and $\gamma_{p, j} \in [0, 1]$ for all $p \in \overline{1, q_j}$, and $\sum_{p = 1}^{q_j} \gamma_{p, j} = 1$.
        Note that
        \begin{equation} \label{m_j_estimate}
            \|m^{(j)}(\xi)\|
            \leq \sum_{p = 1}^{q_j} \gamma_{p, j} \|\ell^{(i_{p, j})}(\xi)\|
            \leq k c (1 + \|x^\ast(\xi)\| + \varepsilon)
            \quad \text{for a.e. } \xi \in [0, T].
        \end{equation}
        Due to the convergence $\|m^{(j)}(\cdot) - \overline{\ell}(\cdot)\|_{L_p([0, T], \mathbb{R}^n)} \to 0$ as $j \to \infty$, we may assume, by extracting a subsequence if necessary, that $\|m^{(j)}(\xi) - \overline{\ell}(\xi)\|$ as $j \to \infty$ for a.e. $\xi \in [0, T]$.
        Hence, letting $j \to \infty$ in \cref{m_j_estimate}, we derive $\|\overline{\ell}(\xi)\| \leq k c (1 + \|x^\ast(\xi)\| + \varepsilon)$ for a.e. $\xi \in [0, T]$.
        Using the fact that $\varepsilon > 0$ was taken arbitrarily, we eventually get
        \begin{displaymath}
            \|\ell(\xi; T, x^\ast(\cdot))\|
            = \|\overline{\ell}(\xi)\|
            \leq k c (1 + \|x^\ast(\xi)\|)
            \quad \text{for a.e. } \xi \in [0, T],
        \end{displaymath}
        which implies that $x^\ast(\cdot) \in \mathcal{X}_k$ and completes the proof.
    \end{proof}

    In addition, note that, for any $x(\cdot) \in \mathcal{W}[0, T]$, there exists $k \in \mathbb{N}$ such that
    \begin{displaymath}
        \|\ell(\xi; T, x(\cdot))\|
        \leq \|\ell(\cdot; T, x(\cdot))\|_{L_\infty([0, T], \mathbb{R}^n)}
        \leq k c
        \leq k c (1 + \|x(\xi)\|)
    \end{displaymath}
    for a.e. $\xi \in [0, T]$, and, therefore $x(\cdot) \in \mathcal{X}_k$.
    This means that
    \begin{equation} \label{W_is_union}
        \mathcal{W}[0, T]
        = \bigcup_{k \in \mathbb{N}} \mathcal{X}_k.
    \end{equation}

    Further, for every $k \in \mathbb{N}$, consider the set
    \begin{equation} \label{G_k}
        \mathcal{G}_k
        \doteq \bigl\{ (t, w(\cdot)) \in \mathcal{G} \colon
        \|\ell(\xi; t, w(\cdot))\|
        \leq k c (1 + \|w(\xi)\|) \text{ for a.e. } \xi \in [0, t] \bigr\}.
    \end{equation}
    In other words, $\mathcal{G}_k$ is the image of the set $[0, T] \times \mathcal{X}_k$ under mapping \cref{mapping_basic}.
    Then, owing to continuity of this mapping, we conclude that $\mathcal{G}_k$ is a compact subset of $\mathcal{G}$.
    Moreover, the union of $\mathcal{G}_k$ over all $k \in \mathbb{N}$ coincides with $\mathcal{G}$ by \cref{W_is_union}.
    Finally, each of the sets $\mathcal{G}_k$ is {\it invariant} with respect to motions of the dynamical system \cref{system} due to \cref{assumption_f}, (c).
    Namely, for any initial position $(t, w(\cdot)) \in \mathcal{G}_k$ and any players' controls $u(\cdot) \in \mathcal{U}[t, T]$, $v(\cdot) \in \mathcal{V}[t, T]$, we have $x(\cdot) \doteq x(\cdot; t, w(\cdot), u(\cdot), v(\cdot)) \in \mathcal{X}_k$ and, consequently, $(\tau, x_\tau(\cdot)) \in \mathcal{G}_k$ for all $\tau \in [0, T]$.

\section{Continuity properties of lower and upper value functionals}
\label{section_continuity}

    Before studying continuity properties of the functionals $\rho_-$ and $\rho_+$ (see \cref{lower_upper_value_functionals}), we establish the following result on continuity of the free term of the integral equation \cref{system_arbitrary}.
    \begin{lemma} \label{lemma_a_mapping}
        For every $k \in \mathbb{N}$, the mapping below is (uniformly) continuous:
        \begin{displaymath}
            \mathcal{G}_k \ni (t, w(\cdot))
            \mapsto a(\cdot; t, w(\cdot)) \in C([0, T], \mathbb{R}^n).
        \end{displaymath}
    \end{lemma}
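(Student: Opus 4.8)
The plan is to establish uniform continuity on the compact metric space $\mathcal{G}_k$ by showing that the map is continuous (as composition of continuous maps) and then invoking compactness. First I would recall that, by definition, $\mathcal{G}_k$ is the image of $[0,T] \times \mathcal{X}_k$ under the continuous mapping \cref{mapping_basic}, and that $\mathcal{X}_k$ is compact in $C([0,T],\mathbb{R}^n)$ by \cref{proposition_X_k}; moreover on $\mathcal{X}_k$ the associated "derivative" $\ell(\cdot; T, x(\cdot))$ ranges over the bounded set $E_k$ from \cref{E_k} (with $\|\ell(\cdot; T, x(\cdot))\|_{L_\infty([0,T],\mathbb{R}^n)} \le M_k$). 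The idea is to express $a(\cdot; t, w(\cdot))$ directly in terms of $t$ and $\ell(\cdot; t, w(\cdot))$: from \cref{a} together with \cref{representation_via_l_t_w}, for a position $(t, w(\cdot))$ coming from $x(\cdot) \in \mathcal{X}_k$, one has
\begin{displaymath}
    a(\tau; t, w(\cdot))
    = y(\tau) + \int_{0}^{\tau} K(\tau, \xi)\, \mathbb{1}_{[0,t)}(\xi)\, \ell(\xi; T, x(\cdot)) \, \rd \xi
    \quad \forall \tau \in [0, T],
\end{displaymath}
i.e.\ $a(\cdot; t, w(\cdot)) = y(\cdot) + \mathbf{K}_\infty[\mathbb{1}_{[0,t)} \ell(\cdot; T, x(\cdot))](\cdot)$.

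So the map factors as $(t, w(\cdot)) \mapsto (t, x(\cdot))$ (a continuous right-inverse of \cref{mapping_basic} restricted to $\mathcal{G}_k$, using \cref{assumption_K_2} for well-definedness) followed by $(t, x(\cdot)) \mapsto \mathbb{1}_{[0,t)} \ell(\cdot; T, x(\cdot)) \in L_p([0,T],\mathbb{R}^n)$ for a fixed $p \in (1/\alpha, \infty)$, followed by the compact — in particular continuous — operator $\mathbf{K}_p$ from \cref{operator_K}, plus the additive constant $y(\cdot)$. Thus it suffices to check that the middle map is continuous from $\mathcal{G}_k$ into $L_p$. Take a convergent sequence $(t^{(i)}, w^{(i)}(\cdot)) \to (t, w(\cdot))$ in $\mathcal{G}_k$, with corresponding extensions $x^{(i)}(\cdot), x(\cdot) \in \mathcal{X}_k$ and derivatives $\ell^{(i)}(\cdot) \doteq \ell(\cdot; T, x^{(i)}(\cdot))$, $\ell(\cdot) \doteq \ell(\cdot; T, x(\cdot))$. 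One shows $x^{(i)}(\cdot) \to x(\cdot)$ in $C([0,T],\mathbb{R}^n)$: indeed $w^{(i)}(\cdot) \to w(\cdot)$ uniformly on the shrinking intervals, $\{x^{(i)}(\cdot)\}$ lies in the compact set $\mathcal{X}_k$, and any subsequential uniform limit $\bar x(\cdot)$ must satisfy $\bar x_t(\cdot) = w(\cdot)$ together with $\ell(\xi; T, \bar x(\cdot)) = 0$ for a.e.\ $\xi \in [t, T]$ (the latter because $\mathbb{1}_{[t^{(i)}, T]} \ell^{(i)} \rightharpoonup$ the corresponding piece and the $\mathcal{X}_k$ structure together with \cref{a} pin it down), which by uniqueness forces $\bar x(\cdot) = a(\cdot; t, w(\cdot))$'s analogue — more simply, $\bar x(\cdot) \in \mathcal{X}(t, w(\cdot))$ has its tail part determined, and by \cref{assumption_K_2} the extension with prescribed $\ell$ on $[0,t)$ is unique, giving $\bar x(\cdot) = x(\cdot)$; hence the whole sequence converges. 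Then, since $\{\ell^{(i)}\}$ is bounded in $L_\infty$ hence (after passing to a further subsequence) weakly convergent in $L_p$, compactness of $\mathbf{K}_p$ forces the weak limit to be $\ell(\cdot; T, x(\cdot))$; combined with the a.e.\ pointwise bound $\|\ell^{(i)}(\xi)\| \le kc(1 + \|x^{(i)}(\xi)\|)$ and the uniform convergence $x^{(i)} \to x$, a uniform-integrability / convexity argument (exactly as in the closedness part of the proof of \cref{proposition_X_k}, extracting convex combinations that converge a.e.) upgrades this to $\ell^{(i)} \to \ell$ strongly in $L_p$, and therefore $\mathbb{1}_{[0, t^{(i)})} \ell^{(i)} \to \mathbb{1}_{[0,t)} \ell$ in $L_p$ (the indicator converges a.e.\ and is dominated). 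Continuity of the composite map on $\mathcal{G}_k$ follows, and uniform continuity is then automatic because $\mathcal{G}_k$ is compact.

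\emph{The main obstacle} is the strong-$L_p$ convergence of the derivatives $\ell^{(i)} \to \ell$: weak convergence is cheap (boundedness plus reflexivity), but one needs to rule out oscillation, and this is where the structural constraint defining $\mathcal{X}_k$ — the a.e.\ bound $\|\ell^{(i)}(\xi)\| \le kc(1+\|x^{(i)}(\xi)\|)$ with $x^{(i)} \to x$ uniformly — is essential; the convex-combination trick borrowed from the closedness argument in \cref{proposition_X_k} is the cleanest route. A secondary technical point is justifying $x^{(i)}(\cdot) \to x(\cdot)$ in $C$ from mere metric convergence of positions, which again leans on compactness of $\mathcal{X}_k$ and the uniqueness built into \cref{assumption_K_2}. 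Everything else — the factorization through $\mathbf{K}_p$, handling the cut-off indicator, passing from continuity to uniform continuity — is routine.
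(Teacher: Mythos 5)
There is a genuine gap at the centre of your factorization. You propose to write the map as $(t, w(\cdot)) \mapsto \ell(\cdot; T, a(\cdot; t, w(\cdot)))$ (the derivative cut off by zero beyond $t$, see \cref{l_a}) viewed as an element of $L_p([0, T], \mathbb{R}^n)$, followed by the \emph{continuous} operator $\mathbf{K}_p$, and to get continuity of the middle map you claim that weak $L_p$ convergence of the derivatives can be ``upgraded'' to strong $L_p$ convergence via the pointwise bound defining $\mathcal{X}_k$ and the convex-combination argument from the proof of \cref{proposition_X_k}. That upgrade is false: Mazur's lemma gives strong (and, after extraction, a.e.) convergence of \emph{convex combinations}, not of the original sequence, and strong convergence genuinely fails in this setting. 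For instance (take $n = 1$ and $k$ with $k c \geq 1$), fix $t \in (0, T]$ and put $w^{(i)}(\cdot) \doteq y_t(\cdot) + \mathbf{K}_\infty^{[t]}[\sin(i \, \cdot)](\cdot)$; then $(t, w^{(i)}(\cdot)) \in \mathcal{G}_k$, and since $\sin(i \, \cdot)$ converges to $0$ weakly in $L_p([0, t], \mathbb{R})$ while $\mathbf{K}_p^{[t]}$ is compact, we have $w^{(i)}(\cdot) \to y_t(\cdot)$ uniformly, i.e.\ the positions converge in $\mathcal{G}_k$, yet $\ell(\cdot; t, w^{(i)}(\cdot)) = \sin(i \, \cdot)$ does not converge to $\ell(\cdot; t, y_t(\cdot)) = 0$ in $L_p$. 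So the middle map of your factorization is \emph{not} continuous into $L_p$ with its norm topology, and mere continuity of $\mathbf{K}_p$ cannot close the argument. The correct mechanism is to use compactness of $\mathbf{K}_p$ in full strength: a compact operator maps weakly convergent sequences to norm-convergent ones, so it suffices to identify the weak $L_p$ limit of the cut-off derivatives as $\ell(\cdot; T, a(\cdot; t, w(\cdot)))$ — which is exactly where your Mazur step belongs (to show the weak limit vanishes a.e.\ on $[t, T]$) — and this is in substance the paper's proof.

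A secondary problem is the phrase ``with corresponding extensions $x^{(i)}(\cdot), x(\cdot) \in \mathcal{X}_k$'' together with the ``continuous right-inverse'' of \cref{mapping_basic}. For arbitrary extensions the claim that a subsequential limit $\bar{x}(\cdot)$ satisfies $\ell(\xi; T, \bar{x}(\cdot)) = 0$ a.e.\ on $[t, T]$ is simply false (the tails on $[t^{(i)}, T]$ are unconstrained within $\mathcal{X}_k$), so you must mean the frozen extensions $x^{(i)}(\cdot) = a(\cdot; t^{(i)}, w^{(i)}(\cdot))$; but then the asserted continuity of that right inverse, i.e.\ $x^{(i)}(\cdot) \to x(\cdot)$ in $C([0, T], \mathbb{R}^n)$, \emph{is} the statement of the lemma, so offering it as an ingredient of the factorization is circular. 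Your sketch of how to prove it — subsequential limits in the compact set $\mathcal{X}_k$, identification of the restriction to $[0, t]$ through the metric of $\mathcal{G}$ and \cref{mapping_basic}, identification of the tail of the weak limit via convex combinations, uniqueness from \cref{assumption_K_2} — is sound and coincides with the paper's argument; the subsequent strong-$L_p$ step should be deleted rather than repaired.
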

    \begin{proof}
        Since $a(\cdot; t, w(\cdot)) \in \mathcal{X}_k$ for all $(t, w(\cdot)) \in \mathcal{G}_k$ (see \cref{l_a}) and $\mathcal{X}_k$ is compact by \cref{proposition_X_k}, in order to prove the result, it suffices to fix $\{(t^{(i)}, w^{(i)}(\cdot))\}_{i = 1}^\infty \subset \mathcal{G}_k$ and $(t^\ast, w^\ast(\cdot)) \in \mathcal{G}_k$ such that, first, $\dist((t^{(i)}, w^{(i)}(\cdot)), (t^\ast, w^\ast(\cdot))) \to 0$ as $i \to \infty$ and, second, $\|a(\cdot; t^{(i)}, w^{(i)}(\cdot)) - \tilde{a}(\cdot)\|_{C([0, T], \mathbb{R}^n)} \to 0$ as $i \to \infty$ for some function $\tilde{a}(\cdot) \in \mathcal{X}_k$ and verify that $\tilde{a}(\cdot) = a(\cdot; t^\ast, w^\ast(\cdot))$.
        Denote $a^{(i)}(\cdot) \doteq a(\cdot; t^{(i)}, w^{(i)}(\cdot))$, $i \in \mathbb{N}$.
        Taking into account that $|t^{(i)} - t^\ast| \to 0$ and $\|a^{(i)}(\cdot) - \tilde{a}(\cdot)\|_{C([0, T], \mathbb{R}^n)} \to 0$ as $i \to \infty$ and mapping \cref{mapping_basic} is continuous, we derive
        \begin{displaymath}
            \dist \bigl( (t^{(i)}, a^{(i)}_{t^{(i)}}(\cdot)), (t^\ast, \tilde{a}_{t^\ast}(\cdot)) \bigr)
            \to 0
            \quad \text{as } i \to \infty.
        \end{displaymath}
        Hence, recalling that $a^{(i)}_{t^{(i)}}(\cdot) = w^{(i)}(\cdot)$ for all $i \in \mathbb{N}$ (see \cref{a}), we get $\tilde{a}_{t^\ast}(\cdot) = w^\ast(\cdot)$ and, consequently, $\tilde{a}(\tau) = a(\tau; t^\ast, w^\ast(\cdot))$, $\tau \in [0, t^\ast]$.
        Thus, the proof is complete if $t^\ast = T$.
        In the case where $t^\ast < T$, it remains to verify that $\ell(\xi; T, \tilde{a}(\cdot)) = 0$ for a.e. $\xi \in [t^\ast, T]$ (see \cref{l_a}).
        Denote $\ell^{(i)}(\cdot) \doteq \ell(\cdot; T, a^{(i)}(\cdot))$, $i \in \mathbb{N}$.
        For every $i \in \mathbb{N}$, in view of the inclusion $a^{(i)}(\cdot) \in \mathcal{X}_k$, we have $\ell^{(i)}(\cdot) \in E_k$, where the set $E_k$ is given by \cref{E_k}.
        Then, taking $p \in (1 / \alpha, \infty)$ and arguing similarly to the proof of \cref{proposition_X_k}, we may assume that the sequence $\{\ell^{(i)}(\cdot)\}_{i = 1}^\infty$ converges weakly in $L_p([0, T], \mathbb{R}^n)$ to some function $\overline{\ell}(\cdot) \in E_k$.
        In particular, this yields (see \cref{operator_K,representation_via_l_t_w})
        \begin{displaymath}
            \|a^{(i)}(\cdot) - y(\cdot) - \mathbf{K}_p [\overline{\ell}(\cdot)](\cdot)\|_{C([0, T], \mathbb{R}^n)}
            = \|\mathbf{K}_p [\ell^{(i)}(\cdot)](\cdot) - \mathbf{K}_p [\overline{\ell}(\cdot)](\cdot)\|_{C([0, T], \mathbb{R}^n)}
            \to 0
        \end{displaymath}
        as $i \to \infty$, wherefrom it follows that $\overline{\ell}(\cdot) = \ell(\cdot; T, \tilde{a}(\cdot))$.
        Hence, we need to show that $\overline{\ell}(\xi) = 0$ for a.e. $\xi \in [t^\ast, T]$.
        To this end, we fix $\delta \in (0, T - t^\ast)$ and prove that $\overline{\ell}(\xi) = 0$ for a.e. $\xi \in [t^\ast + \delta, T]$.
        Since $|t^{(i)} - t^\ast| \to 0$ as $i \to \infty$, there exists $i_\ast \in \mathbb{N}$ such that $t^{(i)} \leq t^\ast + \delta$ for all $i \geq i_\ast$.
        Note that, for every $i \geq i_\ast$ and a.e. $\xi \in [t^\ast + \delta, T]$, we have $\ell^{(i)}(\xi) = 0$ (see \cref{l_a}).
        Further, arguing again as in the proof of \cref{proposition_X_k}, we can find a sequence of convex combinations $\{m^{(j)}(\cdot)\}_{j = 1}^\infty$ of form \cref{convex_combination_1} such that $\|m^{(j)}(\xi) - \overline{\ell}(\xi)\| \to 0$ as $j \to \infty$ for a.e. $\xi \in [0, T]$.
        By construction, $m^{(j)}(\xi) = 0$ for every $j \in \mathbb{N}$ and a.e. $\xi \in [t^\ast + \delta, T]$.
        Therefore, we conclude that $\overline{\ell}(\xi) = 0$ for a.e. $\xi \in [t^\ast + \delta, T]$ and complete the proof.
    \end{proof}

    Denote by $\Phi$ the set of functionals $\varphi \colon \mathcal{G} \to \mathbb{R}$ having the properties below.

    \noindent (a)
        For every $k \in \mathbb{N}$, the restriction of $\varphi$ to $\mathcal{G}_k$ is (uniformly) continuous.

    \noindent (b)
        For every $k \in \mathbb{N}$, there exists $\lambda > 0$ such that, for any $(t, w(\cdot))$, $(t, w^\prime(\cdot)) \in \mathcal{G}_k$,
        \begin{equation} \label{varphi.2_inequality}
            \begin{aligned}
                & |\varphi(t, w(\cdot)) - \varphi(t, w^\prime(\cdot))| \\
                & \ \ \leq \lambda \biggl( \|a(T; t, w(\cdot)) - a(T; t, w^\prime(\cdot))\|
                + \int_{0}^{T} \frac{\|a(\tau; t, w(\cdot)) - a(\tau; t, w^\prime(\cdot))\|}{(T - \tau)^{1 - \alpha}} \, \rd \tau \biggr).
            \end{aligned}
        \end{equation}

    \begin{lemma} \label{lemma_continuity}
        The inclusions $\rho_-$, $\rho_+ \in \Phi$ take place.
    \end{lemma}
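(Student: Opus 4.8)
The plan is to verify the two defining properties of the class $\Phi$ for each of the functionals $\rho_-$ and $\rho_+$; I will write the argument for $\rho_-$, the case of $\rho_+$ being entirely symmetric. Fix $k \in \mathbb{N}$. The key point is that the set $\mathcal{G}_k$ is invariant with respect to system motions (as established at the end of \cref{section_sequence}), so that all motions emanating from positions in $\mathcal{G}_k$ stay within the compact set $\mathcal{X}_k$, and the free term $a(\cdot; t, w(\cdot))$ depends continuously on $(t,w(\cdot)) \in \mathcal{G}_k$ by \cref{lemma_a_mapping}. The overall strategy is standard for value functionals of differential games: estimate the difference of the cost functionals $J(t, w(\cdot), \boldsymbol{a}[v(\cdot)](\cdot), v(\cdot))$ and $J(t', w'(\cdot), \boldsymbol{a}[v(\cdot)](\cdot), v(\cdot))$ (with the \emph{same} strategy $\boldsymbol{a}$ and, after a suitable time-shift, the same control $v(\cdot)$) uniformly over $\boldsymbol{a}$ and $v(\cdot)$, and then pass the estimate through the $\inf\sup$.

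First I would prove property (b), the Lipschitz-type estimate in the functional variable with $t$ fixed. Let $(t, w(\cdot)), (t, w'(\cdot)) \in \mathcal{G}_k$ and fix any $v(\cdot) \in \mathcal{V}[t,T]$ and any non-anticipative strategy $\boldsymbol{a} \in \boldsymbol{\mathcal{A}}[t,T]$; put $u(\cdot) \doteq \boldsymbol{a}[v(\cdot)](\cdot)$. Denote by $x(\cdot)$ and $x'(\cdot)$ the corresponding motions from $(t,w(\cdot))$ and $(t,w'(\cdot))$ under $(u(\cdot),v(\cdot))$; both lie in $\mathcal{X}_k$, hence are uniformly bounded by $R_k$ as in the proof of \cref{proposition_X_k}. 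Subtracting the integral equations \cref{system_arbitrary}, using \cref{assumption_K}, (a), and the local Lipschitz property of $f$ from \cref{assumption_f}, (b), with the constant $\lambda$ corresponding to $R = R_k$, one gets
\begin{displaymath}
    \|x(\tau) - x'(\tau)\|
    \leq \|a(\tau; t, w(\cdot)) - a(\tau; t, w'(\cdot))\|
    + \kappa_\ast \lambda \int_{t}^{\tau} \frac{\|x(\xi) - x'(\xi)\|}{(\tau - \xi)^{1-\alpha}} \, \rd \xi,
\end{displaymath}
whence the generalized Gronwall inequality \cite[Lemma 1.3.13]{Brunner_2017} yields a bound of the form $\|x(\tau) - x'(\tau)\| \leq C \max_{s \in [0,T]} \|a(s; t, w(\cdot)) - a(s; t, w'(\cdot))\|$ with $C$ depending only on $k$ (and the data). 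Then, using \cref{assumption_sigma_chi}, (b) and (c), with $\mathcal{D} = \mathcal{X}_k$, to estimate $|\sigma(x(\cdot)) - \sigma(x'(\cdot))|$ and $|\chi(\tau,x(\tau),u(\tau),v(\tau)) - \chi(\tau,x'(\tau),u(\tau),v(\tau))|$ in terms of $\|x(T)-x'(T)\|$ and $\int_0^T \|x(\tau)-x'(\tau)\|\,\rd\tau$, one obtains $|J(t,w(\cdot),u(\cdot),v(\cdot)) - J(t,w'(\cdot),u(\cdot),v(\cdot))|$ bounded by the right-hand side of \cref{varphi.2_inequality} times a constant depending only on $k$; here one should re-express the $\|x(T)-x'(T)\|$ term back through the free term, which is where the $(T-\tau)^{-(1-\alpha)}$-weighted integral of $\|a(\tau;t,w(\cdot)) - a(\tau;t,w'(\cdot))\|$ naturally arises (one bounds $\|x(T)-x'(T)\|$ directly from the integral equation at $\tau=T$). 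Taking $\sup$ over $v(\cdot)$ and then $\inf$ over $\boldsymbol{a}$ — a routine manipulation since the bound is uniform — gives \cref{varphi.2_inequality} for $\rho_-$.

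Next I would prove property (a), uniform continuity of $\rho_-$ restricted to $\mathcal{G}_k$. Given $(t,w(\cdot)), (t',w'(\cdot)) \in \mathcal{G}_k$ with $t \leq t'$, one combines two effects. The $w$-variation at fixed time is handled by property (b) together with the uniform continuity of $(t,w(\cdot)) \mapsto a(\cdot;t,w(\cdot))$ from \cref{lemma_a_mapping}. The genuinely new ingredient is the variation of the initial time: to compare the games on $[t,T]$ and $[t',T]$ one uses the dynamic programming principle (\cref{proposition_DPP}) with intermediate time $\theta = t'$, which reduces $\rho_-(t, w(\cdot))$ to an $\inf\sup$ of $\rho_-(t', x_{t'}(\cdot))$ plus a running-cost term $\int_t^{t'} \chi\,\rd\tau$ that is $O(t'-t)$ uniformly (bounded using continuity of $\chi$ on the compact range $[0,T]\times B(R_k)\times P\times Q$); then, for each admissible pair of controls on $[t,t']$, the position $x_{t'}(\cdot)$ reached at time $t'$ lies in $\mathcal{G}_k$ by invariance and is close, in the metric of $\mathcal{G}$, to $(t', w'(\cdot))$ — this closeness is controlled by $\dist((t,w(\cdot)),(t',w'(\cdot)))$ together with a modulus coming from \cref{lemma_a_mapping} and a $O(t'-t)$ term from the integral over $[t,t']$ in \cref{system_arbitrary}. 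Feeding this into property (b) applied on $\mathcal{G}_k$ then closes the estimate. I expect the main obstacle to be the bookkeeping in this last step: one must carefully track that the position $(t', x_{t'}(\cdot))$ is uniformly (over all controls and all initial data within a $\delta$-ball) close to $(t', w'(\cdot))$ in the metric $\dist$, using the compactness/invariance of $\mathcal{G}_k$ and the uniform continuity from \cref{lemma_a_mapping}, and that all the generalized Gronwall constants are uniform in $k$; the weak singularity of $K$ makes these estimates a little delicate but they go through exactly as in the proof of \cref{proposition_X_k} and \cref{lemma_a_mapping}.
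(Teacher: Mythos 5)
Your overall route is the one the paper intends: property (b) by comparing the motions generated from $(t,w(\cdot))$ and $(t,w^\prime(\cdot))$ by the \emph{same} strategy--control pair and a Gronwall-type argument (this is exactly the scheme of \cite[Lemma 1]{Gomoyunov_Lukoyanov_2021}, which the paper invokes), and property (a) from \cref{proposition_DPP}, property (b) and \cref{lemma_a_mapping}, as in the standard references. However, there is a genuine gap in your derivation of property (b). After writing the correct integral inequality for $\|x(\tau)-x^\prime(\tau)\|$, you collapse the Gronwall conclusion to the sup-norm bound $\|x(\tau)-x^\prime(\tau)\|\leq C\max_{s}\|a(s;t,w(\cdot))-a(s;t,w^\prime(\cdot))\|$ and then feed this into the estimates of $\int_0^T\|x(\tau)-x^\prime(\tau)\|\,\rd\tau$ and of the singular integral entering the bound of $\|x(T)-x^\prime(T)\|$. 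That only yields a Lipschitz estimate in the sup norm of the difference of free terms, and the sup norm is \emph{not} dominated, with a constant depending only on $k$, by the right-hand side of \cref{varphi.2_inequality}: on $\mathcal{G}_k$ the difference $a(\cdot;t,w(\cdot))-a(\cdot;t,w^\prime(\cdot))$ is merely H\"older-controlled, so the weighted-$L_1$-plus-endpoint quantity controls its sup norm only sublinearly (a power of type $\alpha/(\alpha+1)$), never linearly. Hence the claimed conclusion ``$|J-J^\prime|\leq\lambda\cdot$ right-hand side of \cref{varphi.2_inequality}'' does not follow from the steps as written.

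The repair — and what the cited lemma actually does — is to keep the \emph{pointwise} resolvent form of the weakly singular Gronwall inequality, $\|x(\tau)-x^\prime(\tau)\|\leq \|a(\tau;t,w(\cdot))-a(\tau;t,w^\prime(\cdot))\|+C\int_t^\tau(\tau-\xi)^{\alpha-1}\|a(\xi;t,w(\cdot))-a(\xi;t,w^\prime(\cdot))\|\,\rd\xi$, and then absorb the iterated singular integrals by Fubini and the beta-function identity $\int_\xi^T(T-\tau)^{\alpha-1}(\tau-\xi)^{\alpha-1}\,\rd\tau=\mathrm{B}(\alpha,\alpha)(T-\xi)^{2\alpha-1}\leq \mathrm{B}(\alpha,\alpha)T^\alpha(T-\xi)^{\alpha-1}$ (and a similar, simpler computation for the $\int_0^T\|x-x^\prime\|$ term), so that both $\|x(T)-x^\prime(T)\|$ and $\int_0^T\|x(\tau)-x^\prime(\tau)\|\,\rd\tau$ are bounded \emph{linearly} by $\|a(T;t,w(\cdot))-a(T;t,w^\prime(\cdot))\|+\int_0^T(T-\tau)^{\alpha-1}\|a(\tau;t,w(\cdot))-a(\tau;t,w^\prime(\cdot))\|\,\rd\tau$; combining with \cref{assumption_sigma_chi}, (b), (c), and passing the uniform bound through $\inf\sup$ then gives \cref{varphi.2_inequality}. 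Your outline of property (a) — DPP with $\theta=t^\prime$, boundedness of $\chi$ on the compact range, invariance of $\mathcal{G}_k$, \cref{lemma_a_mapping}, and property (b) (whose right-hand side \emph{is} dominated by the sup norm, so that direction is harmless) — matches the paper's intended argument; note also that the constants in properties (a), (b) are allowed to depend on $k$, so no uniformity in $k$ needs to be tracked.
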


    Indeed, the proof of the fact that $\rho_-$ and $\rho_+$ have property (b) repeats the proof of \cite[Lemma 1]{Gomoyunov_Lukoyanov_2021} with insignificant changes only.
    Note that this is the place where \cref{assumption_sigma_chi}, (c), is important.
    The fact that $\rho_-$ and $\rho_+$ have property (a) can be proved by rather standard arguments (see, e.g., \cite[Chapter XI, Theorem 5.2]{Fleming_Soner_2006}) if we take \cref{proposition_DPP}, property (b), and \cref{lemma_a_mapping} into account.

\section{Coinvariant derivatives}
\label{section_ci_derivatives}

    Denote $\mathcal{G}^\circ \doteq \{(t, w(\cdot)) \in \mathcal{G} \colon t < T\}$.

    A functional $\varphi \colon \mathcal{G} \to \mathbb{R}$ is called {\it coinvariantly differentiable} ({\it $ci$-differentiable} for short) at a point $(t, w(\cdot)) \in \mathcal{G}^\circ$ if there exist $\boldsymbol{\partial}_t \varphi (t, w(\cdot)) \in \mathbb{R}$ and $\boldsymbol{\nabla} \varphi (t, w(\cdot)) \in \mathbb{R}^n$ such that, for every function $x(\cdot) \in \mathcal{X}(t, w(\cdot))$ (see \cref{X_t_w}),
    \begin{equation} \label{ci_derivatives}
        \begin{aligned}
            & \biggl| \frac{\varphi(\tau, x_\tau(\cdot)) - \varphi(t, w(\cdot))}{\tau - t} \\
            & \quad - \boldsymbol{\partial}_t \varphi(t, w(\cdot))
            - \biggl\langle \boldsymbol{\nabla} \varphi(t, w(\cdot)), \frac{1}{\tau - t} \int_{t}^{\tau} \ell(\xi; T, x(\cdot)) \, \rd \xi \biggr\rangle \biggr|
            \to 0
            \quad \text{as } \tau \to t^+,
        \end{aligned}
    \end{equation}
    where $x_\tau(\cdot)$ is the restriction of $x(\cdot)$ to $[0, \tau]$ (see \cref{x_t}) and $\ell(\cdot; T, x(\cdot))$ is determined by $x(\cdot)$ according to \cref{representation_via_l_t_w}.
    In this case, $\boldsymbol{\partial}_t \varphi (t, w(\cdot))$ and $\boldsymbol{\nabla} \varphi(t, w(\cdot))$ are called the {\it $ci$-derivatives} of the functional $\varphi$ at the point $(t, w(\cdot))$; note that they are determined by relation \cref{ci_derivatives} uniquely.

    Let us emphasize that the introduced notion of $ci$-differentiability depends on the kernel $K$ from the integral equation \cref{system}.
    In particular, if we suppose \cref{y_constant} and take kernel \cref{ordinary}, we come to the usual notion of $ci$-differentiability (see, e.g., \cite{Kim_1999,Lukoyanov_2000_PMM_Eng} and also \cite[Section 5.2]{Gomoyunov_Lukoyanov_Plaksin_2021});
    and if we suppose \cref{y_constant} and consider kernel \cref{fractional}, we obtain a generalization of the notion of fractional $ci$-differentiability (see, e.g., \cite{Gomoyunov_2020_SIAM}) to the case of multi-order $\boldsymbol{\alpha}$.

    Further, a functional $\varphi \colon \mathcal{G} \to \mathbb{R}$ is called {\it $ci$-smooth} if it is $ci$-differentiable at every point $(t, w(\cdot)) \in \mathcal{G}^\circ$ and the restriction of $\varphi$ to $\mathcal{G}_k$ (see \cref{G_k}) as well as the restrictions of $\boldsymbol{\partial}_t \varphi$ and $\boldsymbol{\nabla} \varphi$ to $\mathcal{G}_k \cap \mathcal{G}^\circ$ are continuous for every $k \in \mathbb{N}$.

    The proposition below provides the key property of $ci$-smooth functionals and allows us to calculate their total derivatives along motions of system \cref{system}.
    \begin{proposition} \label{proposition_ci_smooth}
        Given a $ci$-smooth functional $\varphi \colon \mathcal{G} \to \mathbb{R}$ and $x(\cdot) \in \mathcal{W}[0, T]$, consider the function $\omega(\tau) \doteq \varphi(\tau, x_\tau(\cdot))$, $\tau \in [0, T]$.
        Then, $\omega(\cdot)$ is continuous;
        $\omega(\cdot)$ is Lipschitz continuous on $[0, \theta]$ for every $\theta \in (0, T)$;
        and
        \begin{equation} \label{proposition_ci_smooth_main}
            \dot{\omega}(\tau)
            = \boldsymbol{\partial}_t \varphi (\tau, x_\tau(\cdot))
            + \langle \boldsymbol{\nabla} \varphi(\tau, x_\tau(\cdot)), \ell(\tau; T, x(\cdot)) \rangle
            \quad \text{for a.e. } \tau \in [0, T],
        \end{equation}
        where $\ell(\cdot; T, x(\cdot))$ is determined by $x(\cdot)$ according to \cref{representation_via_l_t_w}.
    \end{proposition}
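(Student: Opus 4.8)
The plan is to prove continuity first, then local Lipschitz continuity, and finally the differentiation formula \cref{proposition_ci_smooth_main}, treating the latter as the main point. For continuity of $\omega(\cdot)$: fix $\tau_0 \in [0,T]$; since $x(\cdot) \in \mathcal{W}[0,T] = \bigcup_k \mathcal{X}_k$ (see \cref{W_is_union}), there is $k$ with $x(\cdot) \in \mathcal{X}_k$, hence $(\tau, x_\tau(\cdot)) \in \mathcal{G}_k$ for all $\tau$; because the mapping $\tau \mapsto (\tau, x_\tau(\cdot))$ is continuous (it is the composition of $\tau \mapsto (\tau, x(\cdot))$ with the continuous mapping \cref{mapping_basic}) and the restriction of $\varphi$ to $\mathcal{G}_k$ is continuous by $ci$-smoothness, the composition $\omega(\cdot)$ is continuous on $[0,T]$.

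For local Lipschitz continuity on $[0,\theta]$ with $\theta \in (0,T)$: I would use property (b) of the functional class $\Phi$ — but $\varphi$ is only assumed $ci$-smooth, not in $\Phi$, so instead I rely directly on the $ci$-differentiability. The cleanest route is to bound the difference quotient. Fix $k$ with $x(\cdot)\in\mathcal{X}_k$. For $0 \le t < \tau \le \theta$, apply the $ci$-differentiability estimate \cref{ci_derivatives} with the admissible extension $x(\cdot) \in \mathcal{X}(t, x_t(\cdot))$: this gives
\begin{displaymath}
    \omega(\tau) - \omega(t)
    = (\tau - t)\Bigl(\boldsymbol{\partial}_t\varphi(t, x_t(\cdot)) + \Bigl\langle \boldsymbol{\nabla}\varphi(t, x_t(\cdot)), \tfrac{1}{\tau - t}\textstyle\int_t^\tau \ell(\xi; T, x(\cdot))\,\rd\xi \Bigr\rangle\Bigr) + o(\tau - t)
\end{displaymath}
as $\tau \to t^+$. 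The continuous functions $\boldsymbol{\partial}_t\varphi$ and $\boldsymbol{\nabla}\varphi$ are bounded on the compact set $\mathcal{G}_k \cap \{t \le \theta\}$, and $\|\ell(\cdot; T, x(\cdot))\|_{L_\infty} \le M_k$ from the proof of \cref{proposition_X_k}; so the right Dini derivative of $\omega$ is bounded on $[0,\theta]$. Combined with continuity of $\omega$, a standard argument (e.g., the mean value inequality for functions with bounded Dini derivative) yields the Lipschitz bound.

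For the differentiation formula: at a point $\tau \in [0,T)$ where $x(\cdot)$ has $\ell(\tau; T, x(\cdot))$ as a Lebesgue point, I want to show $\omega$ is differentiable at $\tau$ with the stated derivative. The $ci$-differentiability at $(\tau, x_\tau(\cdot))$ applied to the extension $x(\cdot)$ gives exactly
\begin{displaymath}
    \frac{\omega(\tau') - \omega(\tau)}{\tau' - \tau} - \boldsymbol{\partial}_t\varphi(\tau, x_\tau(\cdot)) - \Bigl\langle \boldsymbol{\nabla}\varphi(\tau, x_\tau(\cdot)), \frac{1}{\tau' - \tau}\int_\tau^{\tau'} \ell(\xi; T, x(\cdot))\,\rd\xi \Bigr\rangle \to 0
\end{displaymath}
as $\tau' \to \tau^+$, and since $\tfrac{1}{\tau'-\tau}\int_\tau^{\tau'}\ell(\xi;T,x(\cdot))\,\rd\xi \to \ell(\tau; T, x(\cdot))$ at a Lebesgue point, the right derivative of $\omega$ at $\tau$ equals $\boldsymbol{\partial}_t\varphi(\tau,x_\tau(\cdot)) + \langle\boldsymbol{\nabla}\varphi(\tau,x_\tau(\cdot)), \ell(\tau;T,x(\cdot))\rangle$. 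Since almost every $\tau$ is a Lebesgue point, the right derivative of the locally Lipschitz function $\omega$ exists and equals this expression a.e.; as a Lipschitz function is differentiable a.e. and its derivative agrees with the right derivative wherever the latter exists and is continuous from the right along a full-measure set, $\dot\omega(\tau)$ is given by \cref{proposition_ci_smooth_main} for a.e. $\tau$. I expect the main obstacle to be the Lipschitz step — specifically, upgrading the pointwise right-Dini bound to a genuine Lipschitz estimate rigorously (one must be slightly careful because the $o(\tau-t)$ term in \cref{ci_derivatives} is not claimed uniform in $t$ a priori); the fix is to note that $(t, x_t(\cdot))$ ranges over a compact set, use the continuity of $\boldsymbol{\partial}_t\varphi,\boldsymbol{\nabla}\varphi$ there together with the $L_\infty$ bound on $\ell$, and then invoke a standard lemma stating that a continuous function whose upper right Dini derivative is bounded above by a constant on an interval is Lipschitz with that constant.
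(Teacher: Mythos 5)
Your proposal is correct and follows essentially the same route as the paper: continuity via the inclusion $x(\cdot)\in\mathcal{X}_k$, the continuity of mapping \cref{mapping_basic}, and the continuity of $\varphi$, $\boldsymbol{\partial}_t\varphi$, $\boldsymbol{\nabla}\varphi$ on $\mathcal{G}_k$; Lipschitz continuity on $[0,\theta]$ by bounding the one-sided difference quotients through the $ci$-differentiability expansion and then invoking the standard Dini-derivative lemma (the paper cites the Dini theorem from Bruckner, and, as you correctly note, no uniformity of the $o(\tau-t)$ term in $t$ is needed for this); and formula \cref{proposition_ci_smooth_main} by evaluating the right difference quotient at Lebesgue points of $\ell(\cdot;T,x(\cdot))$ and intersecting with the full-measure set where the Lipschitz function $\omega$ is differentiable. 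The only cosmetic difference is that you bound the $ci$-derivatives on the compact set $\mathcal{G}_k\cap\{t\le\theta\}$ while the paper bounds them along the path $\tau\mapsto(\tau,x_\tau(\cdot))$, which changes nothing of substance.
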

    \begin{proof}
        In view of \cref{W_is_union}, there exists $k \in \mathbb{N}$ such that $x(\cdot) \in \mathcal{X}_k$ and, consequently, $(\tau, x_\tau(\cdot)) \in \mathcal{G}_k$, $\tau \in [0, T]$.
        Therefore, and due to continuity of mapping \cref{mapping_basic}, the functions $\omega(\cdot)$, $[0, T) \ni \tau \mapsto \boldsymbol{\partial}_t \varphi (\tau, x_\tau(\cdot)) \in \mathbb{R}$, and $[0, T) \ni \tau \mapsto \boldsymbol{\nabla} \varphi(\tau, x_\tau(\cdot)) \in \mathbb{R}^n$ are continuous.

        Let $\theta \in (0, T)$.
        Take $M > 0$ such that $|\boldsymbol{\partial}_t \varphi (\tau, x_\tau(\cdot))| \leq M$, $\|\boldsymbol{\nabla} \varphi(\tau, x_\tau(\cdot))\| \leq M$, $\tau \in [0, \theta]$.
        Denote $\ell(\cdot) \doteq \ell(\cdot; T, x(\cdot))$, $L \doteq (1 + \|\ell(\cdot)\|_{L_\infty([0, T], \mathbb{R}^n)}) M$.
        Let us show that
        \begin{equation} \label{proof_Lem_formula_omega}
            |\omega(\tau) - \omega(\tau^\prime)|
            \leq L |\tau - \tau^\prime|
            \quad \forall \tau, \tau^\prime \in [0, \theta].
        \end{equation}
        Fix $\tau \in [0, \theta]$ and $\varepsilon > 0$.
        Since $\varphi$ is $ci$-differentiable at $(\tau, x_\tau(\cdot))$ and $x(\cdot) \in \mathcal{X}(\tau, x_\tau(\cdot))$, then, according to \cref{ci_derivatives}, there exists $\delta_\ast \in (0, T - \tau]$ such that
        \begin{displaymath}
            \biggl| \frac{\omega(\tau^\prime) - \omega(\tau)}{\tau^\prime - \tau} - \boldsymbol{\partial}_t \varphi(\tau, x_\tau(\cdot))
            - \bigg\langle \boldsymbol{\nabla} \varphi(\tau, x_\tau(\cdot)), \frac{1}{\tau^\prime - \tau} \int_{\tau}^{\tau^\prime} \ell(\xi) \, \rd \xi \bigg\rangle \biggr|
            \leq \varepsilon
        \end{displaymath}
        for all $\tau^\prime \in (\tau, \tau + \delta_\ast]$.
        Hence, we have
        \begin{displaymath}
            \biggl| \limsup_{\tau^\prime \to \tau^+} \frac{\omega(\tau^\prime) - \omega(\tau)}{\tau^\prime - \tau} \biggr|
            \leq L + \varepsilon,
        \end{displaymath}
        wherefrom, letting $\varepsilon \to 0^+$, we derive the inequality
        \begin{displaymath}
            \biggl| \limsup_{\tau^\prime \to \tau^+} \frac{\omega(\tau^\prime) - \omega(\tau)}{\tau^\prime - \tau} \biggr|
            \leq L.
        \end{displaymath}
        Taking into account that this inequality holds for every $\tau \in [0, \theta]$ and $\omega(\cdot)$ is continuous on $[0, \theta]$, we get \cref{proof_Lem_formula_omega} by the Dini theorem (see, e.g., \cite[Chapter 4, Theorem 1.2]{Bruckner_1978}).

        Now, let $\mathcal{T}$ be the set of $\tau \in (0, T)$ such that the derivative $\dot{\omega}(\tau)$ exists and $\tau$ is a Lebesgue point of the function $\ell(\cdot)$, which yields
        \begin{displaymath}
            \lim_{\tau^\prime \to \tau^+} \frac{1}{\tau^\prime - \tau} \int_{\tau}^{\tau^\prime} \|\ell(\xi) - \ell(\tau)\| \, \rd \xi
            = 0.
        \end{displaymath}
        Then, for every $\tau \in \mathcal{T}$, we obtain
        \begin{displaymath}
            \begin{aligned}
                \dot{\omega}(\tau)
                & = \lim_{\tau^\prime \to \tau^+} \frac{\omega(\tau^\prime) - \omega(\tau)}{\tau^\prime - \tau} \\
                & = \boldsymbol{\partial}_t \varphi(\tau, x_\tau(\cdot))
                + \biggl\langle \boldsymbol{\nabla} \varphi(\tau, x_\tau(\cdot)), \lim_{\tau^\prime \to \tau^+} \frac{1}{\tau^\prime - \tau} \int_{\tau}^{\tau^\prime} \ell(\xi) \, \rd \xi \biggr\rangle \\
                & = \boldsymbol{\partial}_t \varphi(\tau, x_\tau(\cdot))
                + \langle \boldsymbol{\nabla} \varphi(\tau, x_\tau(\cdot)), \ell(\tau) \rangle.
            \end{aligned}
        \end{displaymath}
        As a result, observing that the set $[0, T] \setminus \mathcal{T}$ has zero Lebesgue measure, we derive relation \cref{proposition_ci_smooth_main}, which completes the proof.
    \end{proof}

\section{Path-dependent Hamilton--Jacobi equations and viscosity solutions}
\label{section_HJ}

    Let us consider the {\it Cauchy problem} for the {\it path-dependent Hamilton--Jacobi equation} with $ci$-derivatives (see \cref{ci_derivatives})
    \begin{equation} \label{HJ}
        \boldsymbol{\partial}_t \varphi(t, w(\cdot))
        + H \bigl( t, w(t), \boldsymbol{\nabla} \varphi(t, w(\cdot)) \bigr)
        = 0
        \quad \forall (t, w(\cdot)) \in \mathcal{G}^\circ
    \end{equation}
    under the right-end {\it boundary condition}
    \begin{equation} \label{boundary_condition}
        \varphi(T, w(\cdot))
        = \sigma(w(\cdot))
        \quad \forall w(\cdot) \in \mathcal{W}[0, T].
    \end{equation}
    The {\it unknown} is a functional $\varphi \colon \mathcal{G} \to \mathbb{R}$;
    the {\it Hamiltonian} $H \colon [0, T] \times \mathbb{R}^n \times \mathbb{R}^n \to \mathbb{R}$ and the {\it boundary functional} $\sigma \colon C([0, T], \mathbb{R}^n) \to \mathbb{R}$ are given.

    Following \cite{Crandall_Lions_1983,Crandall_Evans_Lions_1984} (see also, e.g., \cite{Soner_1988,Lukoyanov_2007_IMM_Eng,Gomoyunov_2023_JDE}), we define a {\it viscosity solution} of the Cauchy problem \cref{HJ,boundary_condition} as a functional $\varphi \in \Phi$ (see \cref{section_continuity}) that satisfies the boundary condition \cref{boundary_condition} and has the following two properties.

    \noindent (a)
        For any $ci$-smooth {\it test functional} $\psi \colon \mathcal{G} \to \mathbb{R}$ and $k \in \mathbb{N}$, if the difference $\varphi - \psi$ attains its minimum on the set $\mathcal{G}_k$ (see \cref{G_k}) at some point $(t, w(\cdot)) \in \mathcal{G}_k \cap \mathcal{G}^\circ$, then
        \begin{displaymath}
            \boldsymbol{\partial}_t \psi(t, w(\cdot))
            + H \bigl( t, w(t), \boldsymbol{\nabla} \psi(t, w(\cdot)) \bigr)
            \leq 0.
        \end{displaymath}

    \noindent (b)
        For any $ci$-smooth test functional $\psi \colon \mathcal{G} \to \mathbb{R}$ and $k \in \mathbb{N}$, if the difference $\varphi - \psi$ attains its maximum on the set $\mathcal{G}_k$ at some point $(t, w(\cdot)) \in \mathcal{G}_k \cap \mathcal{G}^\circ$, then
         \begin{displaymath}
            \boldsymbol{\partial}_t \psi(t, w(\cdot))
            + H \bigl( t, w(t), \boldsymbol{\nabla} \psi(t, w(\cdot)) \bigr)
            \geq 0.
         \end{displaymath}

    \begin{theorem} \label{theorem_existence}
        Suppose that \cref{assumption_f,assumption_K,assumption_sigma_chi,assumption_K_2} hold.
        Then, the lower value functional $\rho_-$ is a viscosity solution of the Cauchy problem \cref{HJ,boundary_condition} with the {\rm lower Hamiltonian}
        \begin{equation} \label{lower_Hamiltonian}
            H_- (t, x, s)
            \doteq  \max_{v \in Q} \min_{u \in P}
            h(t, x, u, v, s)
            \quad \forall t \in [0, T], \ x, s \in \mathbb{R}^n
        \end{equation}
        and the boundary functional $\sigma$ from \cref{cost_functional}, while the upper value functional $\rho_+$ is a viscosity solution of the Cauchy problem \cref{HJ,boundary_condition} with the {\rm upper Hamiltonian}
        \begin{equation} \label{upper_Hamiltonian}
            H_+ (t, x, s)
            \doteq \min_{u \in P} \max_{v \in Q}
            h(t, x, u, v, s)
            \quad \forall t \in [0, T], \ x, s \in \mathbb{R}^n
        \end{equation}
        and the same boundary functional.
        In \cref{lower_Hamiltonian,upper_Hamiltonian}, the function $h$ is from \cref{h}.
    \end{theorem}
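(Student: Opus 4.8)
The plan is to follow the classical scheme of \cite{Evans_Souganidis_1984} (see also \cite[Chapter XI, Theorem 6.1]{Fleming_Soner_2006} and \cite[Theorem 3.3.6]{Yong_2015}), adapted to the path-dependent framework built above. I would treat the lower value functional $\rho_-$ in full and then observe that the case of $\rho_+$ is completely symmetric, with the roles of the two players (and of $\min_{u \in P}$ and $\max_{v \in Q}$) interchanged. Two ingredients come for free: $\rho_- \in \Phi$ is exactly \cref{lemma_continuity}, and the boundary condition \cref{boundary_condition} holds because, for $t = T$, the only admissible motion generated from $(T, w(\cdot))$ is $w(\cdot)$ itself and the integral term in \cref{cost_functional_arbitrary} is void, so $\rho_-(T, w(\cdot)) = \sigma(w(\cdot))$. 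It then remains to verify properties (a) and (b) of a viscosity solution for $\rho_-$ with $H = H_-$. Throughout, I would use the invariance established in \cref{section_sequence}, which ensures that every motion generated from a position in $\mathcal{G}_k$ stays in $\mathcal{X}_k$, so that $(\tau, x_\tau(\cdot)) \in \mathcal{G}_k$ for all $\tau \in [0, T]$.

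For property (a), suppose $\psi$ is $ci$-smooth, $k \in \mathbb{N}$, and $\rho_- - \psi$ attains its minimum over $\mathcal{G}_k$ at $(t_0, w_0(\cdot)) \in \mathcal{G}_k \cap \mathcal{G}^\circ$; I would argue by contradiction, assuming $\boldsymbol{\partial}_t \psi(t_0, w_0(\cdot)) + H_-(t_0, w_0(t_0), \boldsymbol{\nabla} \psi(t_0, w_0(\cdot))) > 0$. By \cref{lower_Hamiltonian} and compactness of $P$, I would pick $v_\ast \in Q$ and $\eta > 0$ with $\boldsymbol{\partial}_t \psi(t_0, w_0(\cdot)) + h(t_0, w_0(t_0), u, v_\ast, \boldsymbol{\nabla} \psi(t_0, w_0(\cdot))) \geq 2 \eta$ for all $u \in P$, fix a small $\delta > 0$, set $\theta \doteq t_0 + \delta < T$, and let the second player use the constant control $v_\ast$ on $[t_0, \theta]$. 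Applying \cref{proposition_DPP} with this $\theta$, bounding the supremum over $v(\cdot)$ from below by its value at $v_\ast$ (and noting that $\boldsymbol{a}[v_\ast(\cdot)]$ ranges over all of $\mathcal{U}[t_0, T]$ as $\boldsymbol{a}$ varies, so the infimum over strategies becomes an infimum over open-loop controls), then invoking the minimality of $\rho_- - \psi$ to replace $\rho_-(\theta, x_\theta(\cdot))$ by $\psi(\theta, x_\theta(\cdot)) + \rho_-(t_0, w_0(\cdot)) - \psi(t_0, w_0(\cdot))$ and cancelling $\rho_-(t_0, w_0(\cdot))$, I would arrive at: for every $\varepsilon > 0$ there is a control $u(\cdot)$ such that, with $x(\cdot) \doteq x(\cdot; t_0, w_0(\cdot), u(\cdot), v_\ast)$,
\begin{displaymath}
    \psi(\theta, x_\theta(\cdot)) - \psi(t_0, w_0(\cdot)) + \int_{t_0}^{\theta} \chi(\tau, x(\tau), u(\tau), v_\ast) \, \rd \tau
    \leq \varepsilon \delta.
\end{displaymath}
By \cref{proposition_ci_smooth} and \cref{motion_arbitrary_ell}, the left-hand side equals $\int_{t_0}^{\theta} \bigl( \boldsymbol{\partial}_t \psi(\tau, x_\tau(\cdot)) + h(\tau, x(\tau), u(\tau), v_\ast, \boldsymbol{\nabla} \psi(\tau, x_\tau(\cdot))) \bigr) \, \rd \tau$; and, for $\delta$ small enough, this integrand is $\geq \eta$ for a.e. $\tau \in [t_0, \theta]$, uniformly in $u(\cdot)$ — because all the motions lie in the compact set $\mathcal{X}_k$, satisfy $\dist((\tau, x_\tau(\cdot)), (t_0, w_0(\cdot))) \to 0$ as $\tau \to t_0^+$ uniformly with respect to the controls, and $\boldsymbol{\partial}_t \psi$, $\boldsymbol{\nabla} \psi$, $f$, $\chi$ are (uniformly) continuous on the relevant compacta. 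This yields $\eta \delta \leq \varepsilon \delta$, and taking $\varepsilon < \eta$ gives the contradiction.

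For property (b), I would instead let $\rho_- - \psi$ attain its maximum over $\mathcal{G}_k$ at $(t_0, w_0(\cdot)) \in \mathcal{G}_k \cap \mathcal{G}^\circ$ and assume, for contradiction, $\boldsymbol{\partial}_t \psi(t_0, w_0(\cdot)) + H_-(t_0, w_0(t_0), \boldsymbol{\nabla} \psi(t_0, w_0(\cdot))) < 0$. Then, using \cref{lower_Hamiltonian} and compactness of $Q$, there is $\eta > 0$ such that for every $v \in Q$ some $u \in P$ satisfies $\boldsymbol{\partial}_t \psi(t_0, w_0(\cdot)) + h(t_0, w_0(t_0), u, v, \boldsymbol{\nabla} \psi(t_0, w_0(\cdot))) \leq - 2 \eta$, and by a measurable selection argument (Filippov's lemma) I would fix a Borel map $Q \ni v \mapsto u_\ast(v) \in P$ realizing this inequality and let the first player use the non-anticipative strategy given on $[t_0, \theta]$ by $\boldsymbol{a}[v(\cdot)](\tau) \doteq u_\ast(v(\tau))$ (extended arbitrarily to $(\theta, T]$). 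Running \cref{proposition_DPP} with this $\boldsymbol{a}$, bounding $\rho_-(\theta, x_\theta(\cdot))$ from above by $\psi(\theta, x_\theta(\cdot)) + \rho_-(t_0, w_0(\cdot)) - \psi(t_0, w_0(\cdot))$ via the maximality of $\rho_- - \psi$, cancelling $\rho_-(t_0, w_0(\cdot))$, and using \cref{proposition_ci_smooth} and \cref{motion_arbitrary_ell} as before, I would obtain that for every $\varepsilon > 0$ there is $v(\cdot)$ with $\int_{t_0}^{\theta} \bigl( \boldsymbol{\partial}_t \psi(\tau, x_\tau(\cdot)) + h(\tau, x(\tau), u_\ast(v(\tau)), v(\tau), \boldsymbol{\nabla} \psi(\tau, x_\tau(\cdot))) \bigr) \, \rd \tau \geq - \varepsilon \delta$, whereas the same uniform-continuity estimate shows the integrand is $\leq - \eta$ a.e. once $\delta$ is small; hence $- \eta \delta \geq - \varepsilon \delta$, and $\varepsilon < \eta$ produces a contradiction. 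With (a) and (b) in hand, $\rho_-$ is a viscosity solution of \cref{HJ,boundary_condition} with $H_-$, and the symmetric argument gives the claim for $\rho_+$ with $H_+$.

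The step I expect to be the main obstacle is the uniform estimate behind the limit $\delta \to 0^+$: that the family of motions emanating from a fixed position $(t_0, w_0(\cdot)) \in \mathcal{G}_k$ collapses to $(t_0, w_0(\cdot))$ in the metric of $\mathcal{G}$ uniformly with respect to both players' controls. This is where the compactness of $\mathcal{X}_k$ from \cref{proposition_X_k}, the weak-singularity representation of $K$ from \cref{assumption_K}, (a), and the linear growth condition \cref{assumption_f}, (c), all enter, alongside the uniform continuity of $\boldsymbol{\partial}_t \psi$ and $\boldsymbol{\nabla} \psi$ on the pertinent compact subset of $\mathcal{G}_k \cap \mathcal{G}^\circ$. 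The remaining pieces — the supremum/infimum manipulations, the identification of constant non-anticipative strategies with open-loop controls, and the measurable selection — are routine, and the identity rewriting $\psi(\theta, x_\theta(\cdot)) - \psi(t_0, w_0(\cdot))$ as the time integral of $\boldsymbol{\partial}_t \psi + \langle \boldsymbol{\nabla} \psi, \ell(\cdot; T, x(\cdot)) \rangle$ is exactly \cref{proposition_ci_smooth}, which is where the $ci$-calculus of \cref{section_ci_derivatives} does its work.
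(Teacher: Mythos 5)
Your proposal is correct and follows essentially the same route as the paper: the paper establishes $\rho_\pm \in \Phi$ via \cref{lemma_continuity}, gets the boundary condition directly from \cref{cost_functional_arbitrary,lower_upper_value_functionals}, and then verifies properties (a) and (b) by combining \cref{proposition_DPP}, \cref{proposition_ci_smooth}, and the invariance of the sets $\mathcal{G}_k$ while repeating the argument of Evans--Souganidis, which is exactly the contradiction scheme (constant control $v_\ast$ for one inequality, a measurably selected response strategy $u_\ast(v)$ for the other, plus the uniform collapse of motions to the initial position as $\delta \to 0^+$) that you spell out. Your fleshed-out details, including the identification of constant strategies with open-loop controls and the use of \cref{motion_arbitrary_ell} to turn the $ci$-chain rule into the integrand $\boldsymbol{\partial}_t \psi + h$, are consistent with what the paper leaves implicit.
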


    Indeed, consider the lower value functional $\rho_-$ for definiteness.
    Recall that the inclusion $\rho_- \in \Phi$ is established in \cref{lemma_continuity}.
    Further, the fact that the functional $\rho_-$ meets the boundary condition \cref{boundary_condition} follows directly from the definition of this functional (see \cref{cost_functional_arbitrary,lower_upper_value_functionals}).
    In addition, taking into account that $\rho_-$ satisfies the dynamic programming principle (see \cref{proposition_DPP}), using \cref{proposition_ci_smooth} and the invariance property of the sets $\mathcal{G}_k$, $k \in \mathbb{N}$, with respect to system motions, and repeating the proof of \cite[Theorem 4.1]{Evans_Souganidis_1984} (see also, e.g., \cite[Chapter XI, Theorem 6.1]{Fleming_Soner_2006} and \cite[Theorem 3.3.6]{Yong_2015}), it can be verified that $\rho_-$ has both properties (a) and (b) with $H = H_-$.
    Thus, $\rho_-$ is indeed a viscosity solution of the Cauchy problem \cref{HJ,boundary_condition} with $H = H_-$ and the boundary functional $\sigma$ from \cref{cost_functional}.

    The next theorem presents a uniqueness result for viscosity solutions of the Cauchy problem \cref{HJ,boundary_condition}.
    \begin{theorem} \label{theorem_uniqueness}
        Let the kernel $K$ satisfy \cref{assumption_K,assumption_K_2}.
        Suppose that, in the Cauchy problem \cref{HJ,boundary_condition}, the Hamiltonian $H$ is continuous and there exists $c > 0$ such that
        \begin{displaymath}
            |H(t, x, s) - H(t, x, s^\prime)|
            \leq c (1 + \|x\|) \|s - s^\prime\|
            \quad \forall t \in [0, T], \ x, s, s^\prime \in \mathbb{R}^n.
        \end{displaymath}
        Then, this problem admits at most one viscosity solution.
    \end{theorem}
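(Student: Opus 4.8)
The plan is to derive \cref{theorem_uniqueness} from a comparison principle, proved by the Crandall--Lions doubling-of-variables technique carried over to the path-dependent setting exactly as in the proof of \cite[Theorem 5.1]{Gomoyunov_2023_JDE} (see also \cite{Lukoyanov_2007_IMM_Eng,Plaksin_2021_SIAM}). The structural part of that argument applies here without change; the single point that requires new work is to verify that the Lyapunov--Krasovskii functional $\nu_\varepsilon$ used there has the needed properties for the kernel $K$ of \cref{assumption_K} and for the notion of $ci$-differentiability from \cref{section_ci_derivatives}. First I would reduce the assertion to the comparison statement: if $\varphi_1, \varphi_2 \in \Phi$ both satisfy the boundary condition \cref{boundary_condition} with one and the same $\sigma$, the functional $\varphi_1$ has viscosity-solution property~(a), and $\varphi_2$ has property~(b), then $\varphi_1 \le \varphi_2$ on $\mathcal{G}$. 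Applying this to any two viscosity solutions of \cref{HJ,boundary_condition} and interchanging their roles yields uniqueness. Since $\mathcal{G} = \bigcup_{k \in \mathbb{N}} \mathcal{G}_k$ with each $\mathcal{G}_k$ compact and invariant with respect to motions of system \cref{system} (see \cref{section_sequence}), it is enough to fix $k$ and prove $\varphi_1 \le \varphi_2$ on $\mathcal{G}_k$; arguing by contradiction, I would suppose $\mu \doteq \max_{\mathcal{G}_k}(\varphi_1 - \varphi_2) > 0$, the maximum being attained by compactness of $\mathcal{G}_k$ and continuity of $\varphi_1 - \varphi_2$ on $\mathcal{G}_k$ (property~(a) in the definition of $\Phi$).

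Then, for parameters $\varepsilon > 0$ and $\beta > 0$, I would consider the penalized functional
\begin{displaymath}
    \Psi_{\varepsilon, \beta}\bigl( (t, w(\cdot)), (t', w'(\cdot)) \bigr)
    \doteq \varphi_1(t, w(\cdot)) - \varphi_2(t', w'(\cdot)) - \nu_\varepsilon\bigl( (t, w(\cdot)), (t', w'(\cdot)) \bigr) - \beta(2 T - t - t')
\end{displaymath}
on the compact set $\mathcal{G}_k \times \mathcal{G}_k$, where $\nu_\varepsilon$ is the Lyapunov--Krasovskii functional from \cite{Gomoyunov_2023_JDE} built from $K$. The properties of $\nu_\varepsilon$ that enter the argument, and that I would have to re-establish in the present generality, are: $\nu_\varepsilon \ge 0$ and $\nu_\varepsilon$ vanishes when the two positions coincide; $\nu_\varepsilon$ is $ci$-smooth (in the sense of \cref{section_ci_derivatives}) with respect to each of its two position arguments separately, with partial $ci$-derivatives bounded on $\mathcal{G}_k \times \mathcal{G}_k$ uniformly in $\varepsilon$, and with $\boldsymbol{\nabla}_w \nu_\varepsilon + \boldsymbol{\nabla}_{w'} \nu_\varepsilon$ small whenever the two positions are close; $\nu_\varepsilon$ is coercive, in that smallness of $\nu_\varepsilon$ forces the quantity on the right-hand side of \cref{varphi.2_inequality}, evaluated on the pair of positions, to be small; and $\nu_\varepsilon$ satisfies a Lyapunov-type inequality bounding $\boldsymbol{\partial}_t \nu_\varepsilon + \langle \boldsymbol{\nabla}_w \nu_\varepsilon, \ell \rangle + \langle \boldsymbol{\nabla}_{w'} \nu_\varepsilon, \ell' \rangle$ (here $\boldsymbol{\partial}_t \nu_\varepsilon$ combines the two $ci$-time-derivatives) by an expression involving $\|\ell - \ell'\|$, $\nu_\varepsilon$ itself, and a term vanishing as the positions collapse onto each other. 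Granting these, $\Psi_{\varepsilon, \beta}$ attains its maximum over $\mathcal{G}_k \times \mathcal{G}_k$ at some $((t_\varepsilon, w_\varepsilon(\cdot)), (t'_\varepsilon, w'_\varepsilon(\cdot)))$; evaluating $\Psi_{\varepsilon,\beta}$ on the ``diagonal'' at the point realizing $\mu$ shows this maximum is at least $\mu - 2\beta T$, whence, by property~(b) of $\Phi$ and the coercivity of $\nu_\varepsilon$, the value of $\nu_\varepsilon$ at the maximizer tends to $0$ and the two positions converge to each other as $\varepsilon, \beta \to 0^+$; finally, using the boundary condition \cref{boundary_condition} together with \cref{assumption_sigma_chi}, (c), one checks that $t_\varepsilon < T$ and $t'_\varepsilon < T$ once $\varepsilon$ and $\beta$ are small enough.

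At such an interior maximizer, freezing $(t'_\varepsilon, w'_\varepsilon(\cdot))$ turns $(t, w(\cdot)) \mapsto \nu_\varepsilon\bigl((t, w(\cdot)), (t'_\varepsilon, w'_\varepsilon(\cdot))\bigr) - \beta t$ into a $ci$-smooth test functional at which $\varphi_1$ minus it attains its maximum on $\mathcal{G}_k$, so property~(b) applies to $\varphi_1$; symmetrically, freezing $(t_\varepsilon, w_\varepsilon(\cdot))$ produces a $ci$-smooth test functional at which $\varphi_2$ minus it attains its minimum, so property~(a) applies to $\varphi_2$. Identifying the $ci$-derivatives of these test functionals with the corresponding $ci$-derivatives of $\nu_\varepsilon$ via \cref{proposition_ci_smooth}, subtracting the two viscosity inequalities, and then invoking the Lyapunov inequality on $\nu_\varepsilon$, the near-cancellation $\boldsymbol{\nabla}_w \nu_\varepsilon \approx -\boldsymbol{\nabla}_{w'} \nu_\varepsilon$, the uniform bound on the $ci$-derivatives of $\nu_\varepsilon$ (keeping the relevant Hamiltonian arguments in a fixed compact set), and the hypotheses on $H$ — continuity, together with the Lipschitz bound in $s$ with constant $c(1 + \|x\|)$, which renders the difference $H(t_\varepsilon, w_\varepsilon(t_\varepsilon), \cdot) - H(t'_\varepsilon, w'_\varepsilon(t'_\varepsilon), \cdot)$ negligible on that set — one arrives at $2\beta \le o(1)$ as $\varepsilon \to 0^+$ for each fixed small $\beta$, hence at $\beta \le 0$, a contradiction. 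Therefore $\mu \le 0$, i.e., $\varphi_1 \le \varphi_2$ on $\mathcal{G}_k$, and letting $k$ vary and swapping $\varphi_1, \varphi_2$ gives uniqueness.

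The main obstacle, as anticipated, is the technical verification of the above-listed properties of $\nu_\varepsilon$ in the present generality: one has to rerun the construction of $\nu_\varepsilon$ from \cite{Gomoyunov_2023_JDE} with $K(\tau, \xi) = K_\ast(\tau, \xi) / (\tau - \xi)^{1 - \alpha}$ (see \cref{K_representation}) in place of the specific fractional kernels considered there, propagating the continuity and Hölder continuity of $K_\ast$ supplied by \cref{assumption_K} through all the estimates, and in particular checking $ci$-smoothness of $\nu_\varepsilon$ with respect to the $K$-dependent notion of $ci$-differentiability from \cref{section_ci_derivatives} and the corresponding Lyapunov decay along motions of system \cref{system}.
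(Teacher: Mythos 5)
Your overall route coincides with the paper's: reduce uniqueness to a doubling-of-variables comparison argument carried out on the compact, motion-invariant sets $\mathcal{G}_k$, with the Lyapunov--Krasovskii functional $\nu_\varepsilon$ built from the kernel $K$ serving as the penalty, and with the needed properties of $\nu_\varepsilon$ (nonnegativity and vanishing on the diagonal, $ci$-smoothness with $\boldsymbol{\partial}_t \mu_\varepsilon^{(\tau, r(\cdot))} = 0$, the coercivity bound matching the right-hand side of \cref{varphi.2_inequality}, and the gradient estimates) supplied by \cref{lemma_nu_basic}; verifying those properties under \cref{assumption_K,assumption_K_2} is indeed the only genuinely new technical work, as you anticipate.

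There is, however, a concrete gap in your penalized functional $\Psi_{\varepsilon,\beta}$: it omits both the weight $1/\varepsilon$ on $\nu_\varepsilon$ and, more importantly, a separate time-doubling penalty. The paper's basic functional contains the additional term $(t - \tau)^2 / \varepsilon^{3/(\alpha \wedge \beta)}$, and this is precisely the one substantive modification of the proof of \cite[Theorem 5.1]{Gomoyunov_2023_JDE} that the present setting forces, because the near-antisymmetry of the gradients in \cref{lemma_nu_basic}, (e), holds only up to an error $C_3 |t - \tau|^{\alpha \wedge \beta}$. Without this term several steps you invoke fail. First, $\nu_\varepsilon$ does not control $|t - t'|$ at all: by \cref{lemma_nu_basic}, (a), one has $\nu_\varepsilon((t, w(\cdot)), (t', a_{t'}(\cdot; t, w(\cdot)))) = 0$ for every $t' > t$, and property (c) of that lemma requires $|t - t'| \to 0$ as a separate hypothesis; hence your claim that smallness of $\nu_\varepsilon$ at the maximizer makes the two positions (in particular the two times) converge is unjustified, and neither the comparison of $H$ at the two base points via its continuity nor the cancellation $\boldsymbol{\nabla}_w \nu_\varepsilon \approx - \boldsymbol{\nabla}_{w'} \nu_\varepsilon$ can be carried out. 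Second, the scaling is quantitative: in the paper's argument the test gradients fed into $H$ are $\boldsymbol{\nabla} \mu_\varepsilon^{(\cdot)}(\cdot)/\varepsilon$, their uniform boundedness comes from the estimate $\nu_\varepsilon \lesssim \varepsilon^{q/(q-1)}$ at the maximizer (obtained by comparing with the point whose second component is the extension $a$ of the first, using property (b) of the class $\Phi$ together with \cref{lemma_nu_basic}, (b)), which needs the $1/\varepsilon$ weight, and the error from property (e) then enters as $|t_\varepsilon - \tau_\varepsilon|^{\alpha \wedge \beta}/\varepsilon$, which is driven to zero exactly by the coefficient $\varepsilon^{-3/(\alpha \wedge \beta)}$ of the time penalty. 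A minor further discrepancy: no ``Lyapunov decay along motions'' inequality of the kind you posit is used; since $\boldsymbol{\partial}_t \mu_\varepsilon^{(\tau, r(\cdot))} = 0$ by \cref{lemma_nu_basic}, (d), only the stationary gradient estimates above enter the uniqueness proof (the decay-type computation is needed later, in \cref{lemma_nu_advanced}, for the optimal strategies).
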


    In order to prove \cref{theorem_uniqueness}, consider the Lyapunov--Krasovskii functional
    \begin{equation} \label{nu}
        \begin{aligned}
            & \nu_\varepsilon \bigl( (t, w(\cdot)), (\tau, r(\cdot)) \bigr)
            \doteq \bigl( \varepsilon^{\frac{2}{q - 1}} + \|a(T; t, w(\cdot)) - a(T; \tau, r(\cdot))\|^2 \bigr)^{\frac{q}{2}} \\
            & \quad + \int_{0}^{T} \frac{\bigl( \varepsilon^{\frac{2}{q - 1}} + \|a(\xi; t, w(\cdot)) - a(\xi; \tau, r(\cdot))\|^2 \bigr)^{\frac{q}{2}}}
            {(T - \xi)^{(1 - \alpha - \alpha^\prime) q}} \, \rd \xi
            - C_1 \varepsilon^{\frac{q}{q - 1}}.
        \end{aligned}
    \end{equation}
    In the above, $(t, w(\cdot))$, $(\tau, r(\cdot)) \in \mathcal{G}$;
    $\varepsilon \in (0, 1]$ is a small parameter;
    $q \doteq 2 / (2 - \alpha)$,
    $\alpha^\prime \in (0, (1 - \alpha) \wedge (\alpha / 2))$,
    $C_1 \doteq 1 + T^{1 - (1 - \alpha - \alpha^\prime) q} / (1 - (1 - \alpha - \alpha^\prime) q)$;
    the functions $a(\cdot; t, w(\cdot))$, $a(\cdot; \tau, r(\cdot))$ are defined in accordance with \cref{a}.
    The necessary properties of the functional $\nu_\varepsilon$ are described in the lemma below.

    \begin{lemma} \label{lemma_nu_basic}
        The following statements hold.

        \noindent {\rm (a)}
            For every $\varepsilon \in (0, 1]$, the functional $\nu_\varepsilon$ is non-negative;
            the equalities
            \begin{displaymath}
                    \nu_\varepsilon \bigl( (t, w(\cdot)), (t, w(\cdot)) \bigr)
                    = 0,
                    \quad \nu_\varepsilon \bigl( (t, w(\cdot)), (\tau, r(\cdot)) \bigr)
                    = \nu_\varepsilon \bigl( (\tau, r(\cdot)), (t, w(\cdot)) \bigr),
            \end{displaymath}
            and
            \begin{displaymath}
                    \nu_\varepsilon \Bigl( \bigl( t^\prime, a_{t^\prime}(\cdot; t, w(\cdot)) \bigr), \bigl( \tau^\prime, a_{\tau^\prime}(\cdot; \tau, r(\cdot)) \bigr) \Bigr)
                    = \nu_\varepsilon \bigl( (t, w(\cdot)), (\tau, r(\cdot)) \bigr)
            \end{displaymath}
            are valid for all $(t, w(\cdot))$, $(\tau, r(\cdot)) \in \mathcal{G}$, $t^\prime \in [t, T]$, and $\tau^\prime \in [\tau, T]$; and the restriction of the functional $\nu_\varepsilon$ to the set $\mathcal{G}_k \times \mathcal{G}_k$ is (uniformly) continuous for every $k \in \mathbb{N}$.

        \noindent {\rm (b)}
            There exists $C_2 > 0$ such that, for any $\varepsilon \in (0, 1]$ and $(t, w(\cdot))$, $(\tau, r(\cdot)) \in \mathcal{G}$,
            \begin{displaymath}
                \begin{aligned}
                    & \|a(T; t, w(\cdot)) - a(T; \tau, r(\cdot))\|
                    + \int_{0}^{T} \frac{\|a(\xi; t, w(\cdot)) - a(\xi; \tau, r(\cdot))\|}{(T - \xi)^{1 - \alpha}} \, \rd \xi \\
                    & \quad \leq C_2 \Bigl( \nu_\varepsilon \bigl( (t, w(\cdot)), (\tau, r(\cdot)) \bigr) + C_1 \varepsilon^{\frac{q}{q - 1}} \Bigr)^{\frac{1}{q}}.
                \end{aligned}
            \end{displaymath}

        \noindent {\rm (c)}
            Let $k \in \mathbb{N}$ be fixed and let $(t^{(\varepsilon)}, w^{(\varepsilon)}(\cdot))$, $(\tau^{(\varepsilon)}, r^{(\varepsilon)}(\cdot)) \in \mathcal{G}_k$ be given for all $\varepsilon \in (0, 1]$.
            Suppose that $|t^{(\varepsilon)} - \tau^{(\varepsilon)}| \to 0$ and $\nu_\varepsilon ((t^{(\varepsilon)}, w^{(\varepsilon)}(\cdot)), (\tau^{(\varepsilon)}, r^{(\varepsilon)}(\cdot))) \to 0$ as $\varepsilon \to 0^+$.
            Then, we have $\dist((t^{(\varepsilon)}, w^{(\varepsilon)}(\cdot)), (\tau^{(\varepsilon)}, r^{(\varepsilon)}(\cdot))) \to 0$ as $\varepsilon \to 0^+$.

        \noindent {\rm (d)}
            Let $\varepsilon \in (0, 1]$ and $(\tau, r(\cdot)) \in \mathcal{G}$ be fixed.
            Then, the functional
            \begin{equation} \label{psi_varepsilon}
                \mu_\varepsilon^{(\tau, r(\cdot))} (t, w(\cdot))
                \doteq \nu_\varepsilon \bigl( (t, w(\cdot)), (\tau, r(\cdot)) \bigr)
                \quad \forall (t, w(\cdot)) \in \mathcal{G}
            \end{equation}
            is $ci$-smooth and its $ci$-derivatives are given by $\boldsymbol{\partial}_t \mu_\varepsilon^{(\tau, r(\cdot))} (t, w(\cdot)) = 0$ and
            \begin{equation} \label{lemma_nu_properties_item_c_derivative_x}
                \begin{aligned}
                    & \boldsymbol{\nabla} \mu_\varepsilon^{(\tau, r(\cdot))} (t, w(\cdot))
                    = \frac{q K(T, t)^\top \bigl( a(T; t, w(\cdot)) - a(T; \tau, r(\cdot)) \bigr)}
                    {\big( \varepsilon^{\frac{2}{q - 1}} + \|a(T; t, w(\cdot)) - a(T; \tau, r(\cdot))\|^2 \big)^{1 - \frac{q}{2}}} \\
                    & \quad + \int_{t}^{T} \frac{q K(\xi, t)^\top \bigl( a(\xi; t, w(\cdot)) - a(\xi; \tau, r(\cdot)) \bigr)}
                    {\big( \varepsilon^{\frac{2}{q - 1}} + \|a(\xi; t, w(\cdot)) - a(\xi; \tau, r(\cdot))\|^2 \big)^{1 - \frac{q}{2}} (T - \xi)^{(1 - \alpha - \alpha^\prime) q}} \, \rd \xi
                \end{aligned}
            \end{equation}
            for all $(t, w(\cdot)) \in \mathcal{G}^\circ$, where the superscript $^\top$ denotes transposition.

        \noindent {\rm (e)}
            For any $\theta \in (0, T)$ and $k \in \mathbb{N}$, there exists $C_3 > 0$ such that
            \begin{displaymath}
                \begin{aligned}
                    \| \boldsymbol{\nabla} \mu_\varepsilon^{(\tau, r(\cdot))} (t, w(\cdot)) \|
                    & \leq C_3 \Bigl( \nu_\varepsilon \bigl( (t, w(\cdot)), (\tau, r(\cdot)) \bigr) + C_1 \varepsilon^{\frac{q}{q - 1}} \Bigr)^{\frac{q - 1}{q}}, \\
                    \| \boldsymbol{\nabla} \mu_\varepsilon^{(\tau, r(\cdot))} (t, w(\cdot))
                    + \boldsymbol{\nabla} \mu_\varepsilon^{(t, w(\cdot))} (\tau, r(\cdot)) \|
                    & \leq C_3 |t - \tau|^{\alpha \wedge \beta}
                \end{aligned}
            \end{displaymath}
            for all $\varepsilon \in (0, 1]$ and $(t, w(\cdot))$, $(\tau, r(\cdot)) \in \mathcal{G}_k$ with $t \leq \theta$, $\tau \leq \theta$.
            Here, the number $\beta$ is taken from \cref{assumption_K}, {\rm (b)}.
    \end{lemma}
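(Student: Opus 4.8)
The plan is to establish the five items essentially independently, all resting on the explicit form of $\nu_\varepsilon$ in \cref{nu}, on the continuity of $(t, w(\cdot)) \mapsto a(\cdot; t, w(\cdot))$ on each $\mathcal{G}_k$ (\cref{lemma_a_mapping}), on compactness of $\mathcal{X}_k$ (\cref{proposition_X_k}), and on the semigroup identity \cref{semigroup_property}. A preliminary remark used throughout: since $q = 2 / (2 - \alpha)$ and $\alpha > 0$ one has $(1 - \alpha) q < 1$, and since $\alpha^\prime < \alpha / 2$ one has $(1 - \alpha - \alpha^\prime) q < 1$ and $\alpha^\prime q^\prime < 1$ for the conjugate exponent $q^\prime \doteq q / (q - 1) = 2 / \alpha$; these inequalities are exactly what make all the weakly-singular integrals below convergent, and I would record them first. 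For item (a), the non-negativity of $\nu_\varepsilon$ and the equality $\nu_\varepsilon((t, w(\cdot)), (t, w(\cdot))) = 0$ both reduce to the arithmetic identity $\varepsilon^{q / (q - 1)} (1 + T^{1 - (1 - \alpha - \alpha^\prime) q} / (1 - (1 - \alpha - \alpha^\prime) q)) = C_1 \varepsilon^{q / (q - 1)}$, after bounding each squared-norm term from below by $0$ (resp. setting it to $0$); symmetry is visible from \cref{nu}; the shift invariance is exactly \cref{semigroup_property}, which yields $a(\cdot; t^\prime, a_{t^\prime}(\cdot; t, w(\cdot))) = a(\cdot; t, w(\cdot))$ for $t^\prime \in [t, T]$, so the arguments of \cref{nu} are unchanged; and uniform continuity on $\mathcal{G}_k \times \mathcal{G}_k$ follows by composing \cref{lemma_a_mapping} with the continuity of $(a(\cdot), b(\cdot)) \mapsto \|a(T) - b(T)\|^2 + \int_0^T (\varepsilon^{2 / (q - 1)} + \|a(\xi) - b(\xi)\|^2)^{q / 2} (T - \xi)^{- (1 - \alpha - \alpha^\prime) q} \, \rd \xi$ on $\mathcal{X}_k \times \mathcal{X}_k$ (the integrand is dominated uniformly by a multiple of the integrable $(T - \xi)^{- (1 - \alpha - \alpha^\prime) q}$), using compactness of $\mathcal{G}_k$.

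For item (b) and the first bound of item (e), set $d(\xi) \doteq \|a(\xi; t, w(\cdot)) - a(\xi; \tau, r(\cdot))\|$ and $B \doteq (\nu_\varepsilon((t, w(\cdot)), (\tau, r(\cdot))) + C_1 \varepsilon^{q / (q - 1)})^{1 / q}$; dropping the $\varepsilon$-terms in \cref{nu} gives $d(T)^q + \int_0^T d(\xi)^q (T - \xi)^{- (1 - \alpha - \alpha^\prime) q} \, \rd \xi \leq B^q$. Then $d(T) \leq B$, and writing $(T - \xi)^{- (1 - \alpha)} = (T - \xi)^{- (1 - \alpha - \alpha^\prime)} (T - \xi)^{- \alpha^\prime}$ and applying Hölder with exponents $q, q^\prime$ bounds the integral term in (b) by $B (\int_0^T (T - \xi)^{- \alpha^\prime q^\prime} \, \rd \xi)^{1 / q^\prime}$. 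For the first bound of (e), \cref{lemma_nu_properties_item_c_derivative_x}, the inequality $\|A\| (\varepsilon^{2 / (q - 1)} + \|A\|^2)^{q / 2 - 1} \leq \|A\|^{q - 1}$, and $\|K(\xi, t)\| \leq \kappa_\ast (\xi - t)^{- (1 - \alpha)}$ bound $\|\boldsymbol{\nabla} \mu_\varepsilon^{(\tau, r(\cdot))}(t, w(\cdot))\|$ by a constant times $d(T)^{q - 1} + \int_t^T (\xi - t)^{- (1 - \alpha)} d(\xi)^{q - 1} (T - \xi)^{- (1 - \alpha - \alpha^\prime) q} \, \rd \xi$; splitting the weight as $(T - \xi)^{- (1 - \alpha - \alpha^\prime)(q - 1)} (T - \xi)^{- (1 - \alpha - \alpha^\prime)}$ and applying Hölder with exponents $q / (q - 1), q$ gives $\leq C_3 B^{q - 1}$, because $\int_t^T (\xi - t)^{- (1 - \alpha) q} (T - \xi)^{- (1 - \alpha - \alpha^\prime) q} \, \rd \xi$ is finite (both exponents $< 1$) and stays bounded for $t \leq \theta$ (since $T - t \geq T - \theta$). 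For the second bound of (e), assume $t \leq \tau$; by \cref{lemma_nu_properties_item_c_derivative_x}, and since $d(\xi)^{q - 1}$ is bounded on $\mathcal{G}_k$, the norm in question is at most a constant times $\|K(T, t) - K(T, \tau)\| + \int_\tau^T \|K(\xi, t) - K(\xi, \tau)\| (T - \xi)^{- (1 - \alpha - \alpha^\prime) q} \, \rd \xi + \int_t^\tau \|K(\xi, t)\| (T - \xi)^{- (1 - \alpha - \alpha^\prime) q} \, \rd \xi$; the last integral is $O(|t - \tau|^\alpha)$ (there $T - \xi \geq T - \theta$), and for the kernel differences one writes $K(\xi, t) - K(\xi, \tau) = [K_\ast(\xi, t) - K_\ast(\xi, \tau)] (\xi - t)^{- (1 - \alpha)} + K_\ast(\xi, \tau) [(\xi - t)^{- (1 - \alpha)} - (\xi - \tau)^{- (1 - \alpha)}]$, estimates the first bracket by $\lambda |t - \tau|^\beta$ via \cref{assumption_K}, (b), rewrites the second via $(\xi - \tau)^{- (1 - \alpha)} - (\xi - t)^{- (1 - \alpha)} = (1 - \alpha) \int_t^\tau (\xi - s)^{- (2 - \alpha)} \, \rd s$, and after Fubini and elementary one-dimensional integrals obtains $O(|t - \tau|^\beta)$ and $O(|t - \tau|^\alpha)$, hence $O(|t - \tau|^{\alpha \wedge \beta})$ in total, with constants uniform for $t, \tau \leq \theta$.

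For item (d), fix $(\tau, r(\cdot)) \in \mathcal{G}$ and write $\mu_\varepsilon \doteq \mu_\varepsilon^{(\tau, r(\cdot))}$ (see \cref{psi_varepsilon}), $g(z) \doteq (\varepsilon^{2 / (q - 1)} + z)^{q / 2}$, and $A(\xi) \doteq a(\xi; t, w(\cdot)) - a(\xi; \tau, r(\cdot))$. For $(t, w(\cdot)) \in \mathcal{G}^\circ$ and $x(\cdot) \in \mathcal{X}(t, w(\cdot))$, the uniqueness of $\ell(\cdot; t, w(\cdot))$ (via \cref{assumption_K_2}) and the definition \cref{a} give that $\Delta_{t^\prime}(\xi) \doteq a(\xi; t^\prime, x_{t^\prime}(\cdot)) - a(\xi; t, w(\cdot))$ equals $\int_t^{\xi \wedge t^\prime} K(\xi, s) \ell(s; T, x(\cdot)) \, \rd s$ for $\xi \in [t, T]$ and vanishes for $\xi < t$. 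Taylor's formula $g(\|A(\xi) + \Delta_{t^\prime}(\xi)\|^2) - g(\|A(\xi)\|^2) = 2 g^\prime(\|A(\xi)\|^2) \langle A(\xi), \Delta_{t^\prime}(\xi) \rangle + O(\|\Delta_{t^\prime}(\xi)\|^2)$ (with $g^\prime$ bounded for fixed $\varepsilon$) then shows, upon integrating the leading terms against the weight, that $(\mu_\varepsilon(t^\prime, x_{t^\prime}(\cdot)) - \mu_\varepsilon(t, w(\cdot))) / (t^\prime - t)$ equals $\langle \boldsymbol{\nabla} \mu_\varepsilon(t, w(\cdot)), \tfrac{1}{t^\prime - t} \int_t^{t^\prime} \ell(s; T, x(\cdot)) \, \rd s \rangle$ up to lower-order errors (from replacing $K(\xi, s)$ by $K(\xi, t)$ inside $\Delta_{t^\prime}$, from the truncation $\xi \wedge t^\prime$ versus $t^\prime$, and from the quadratic remainder); all of these tend to $0$ as $t^\prime \to t^+$, which by uniqueness of the $ci$-derivatives gives $\boldsymbol{\partial}_t \mu_\varepsilon \equiv 0$ and $\boldsymbol{\nabla} \mu_\varepsilon$ as in \cref{lemma_nu_properties_item_c_derivative_x}. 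Continuity of $\boldsymbol{\nabla} \mu_\varepsilon$ on $\mathcal{G}_k \cap \mathcal{G}^\circ$ follows from \cref{lemma_a_mapping}, continuity of $K$ on $\Omega^\circ$, and dominated convergence (isolating a small neighbourhood of $\xi = t$), and with item (a) this yields $ci$-smoothness.

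For item (c), by item (b) the assumption $\nu_\varepsilon((t^{(\varepsilon)}, w^{(\varepsilon)}(\cdot)), (\tau^{(\varepsilon)}, r^{(\varepsilon)}(\cdot))) \to 0$ forces $\int_0^T d^{(\varepsilon)}(\xi) (T - \xi)^{- (1 - \alpha)} \, \rd \xi \to 0$, hence (using $(T - \xi)^{- (1 - \alpha)} \geq T^{- (1 - \alpha)}$) $\int_0^T d^{(\varepsilon)}(\xi) \, \rd \xi \to 0$, where $d^{(\varepsilon)}$ is the difference of $a(\cdot; t^{(\varepsilon)}, w^{(\varepsilon)}(\cdot))$ and $a(\cdot; \tau^{(\varepsilon)}, r^{(\varepsilon)}(\cdot))$; these functions lie in the compact, hence equicontinuous, set $\mathcal{X}_k$, so $\{d^{(\varepsilon)}\}$ is uniformly bounded and equicontinuous and $L_1$-convergence to $0$ upgrades to $\max_{[0, T]} d^{(\varepsilon)} \to 0$; combined with $|t^{(\varepsilon)} - \tau^{(\varepsilon)}| \to 0$, the common modulus of continuity on $\mathcal{X}_k$, and $w^{(\varepsilon)}(\sigma) = a(\sigma; t^{(\varepsilon)}, w^{(\varepsilon)}(\cdot))$ for $\sigma \leq t^{(\varepsilon)}$ (similarly for $r^{(\varepsilon)}$), this gives $\dist \to 0$. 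The main obstacle is item (d): because $K(\xi, t)$ is power-law singular as $\xi \to t^+$, the ratio $\Delta_{t^\prime}(\xi) / (t^\prime - t)$ is not uniformly bounded near $\xi = t$, so the passage to the limit under the weighted integral cannot be done by a plain dominated-convergence argument on all of $[t, T]$ — one must split off $[t, t + \eta]$, where the weight $(T - \xi)^{- (1 - \alpha - \alpha^\prime) q}$ stays bounded and the singular contribution of $K$ is $O(\eta^\alpha)$ uniformly in $t^\prime$, apply dominated convergence on $[t + \eta, T]$, and then let $\eta \to 0^+$; the same splitting and the bound $\|\Delta_{t^\prime}(\xi)\| \leq C (t^\prime - t) (\xi - t^\prime)^{- (1 - \alpha)}$ for $\xi \geq t^\prime$ (integrable against the weight precisely because $(1 - \alpha) q < 1$) control the quadratic remainder.
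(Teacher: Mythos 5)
Your proposal is essentially correct, but it is organized quite differently from the paper: the paper proves \cref{lemma_nu_basic} almost entirely by reference to Lemmas 5.1--5.5 of the earlier fractional-order paper (Gomoyunov, J.~Differ.~Equ.~2024), recording as the ``only change'' a single quantitative estimate, obtained via integration by parts and \cref{assumption_K},~(b), for $\bigl\| a(\xi; \tau, x_\tau(\cdot)) - a(\xi; t, w(\cdot)) - K(\xi, t) \int_t^\tau \ell(\zeta) \, \rd \zeta \bigr\|$, which is then inserted into the old proof of item (d). Your self-contained treatment of (a)--(c) and of the first bound in (e) (the identity fixing $C_1$, the use of \cref{semigroup_property} and \cref{lemma_a_mapping}, H\"older with exponents $q$ and $q/(q-1)$ together with $\alpha^\prime q/(q-1) < 1$, and the uniform boundedness of the Beta-type integrals for $t \leq \theta$) reproduces the intended arguments, and your second bound in (e) uses the same kernel-difference decomposition and \cref{assumption_K},~(b), as the cited proof. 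For item (d) you take a genuinely different, softer route: instead of the paper's quantitative error estimate you split off $[t, t+\eta]$, where the contribution is $O(\eta^\alpha)$ uniformly, apply dominated convergence on $[t+\eta, T]$, and let $\eta \to 0^+$. This is legitimate, since pointwise $ci$-differentiability requires no rate of convergence, and it even shows that (d) needs only the continuity of $K_\ast$, not its H\"older property --- consistent with the paper's remark that \cref{assumption_K},~(b), is essential only in (e), although the paper's own displayed estimate for (d) does invoke it.

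One step in (d) is justified incorrectly as written: the quadratic remainder. Your bound $\|\Delta_{t^\prime}(\xi)\| \leq C (t^\prime - t)(\xi - t^\prime)^{-(1-\alpha)}$ has square proportional to $(\xi - t^\prime)^{-2(1-\alpha)}$, which fails to be integrable near $\xi = t^\prime$ whenever $\alpha \leq 1/2$, and the condition you invoke, $(1-\alpha) q < 1$, concerns the $q$-th power rather than the square; so on $[t, t+\eta]$ this bound alone does not control $\frac{1}{t^\prime - t} \int \|\Delta_{t^\prime}(\xi)\|^2 (T - \xi)^{-(1-\alpha-\alpha^\prime) q} \, \rd \xi$. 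The repair is one line: pair it with $\|\Delta_{t^\prime}(\xi)\| \leq \kappa_\ast \|\ell(\cdot)\|_{L_\infty([0,T],\mathbb{R}^n)} \alpha^{-1} \bigl[ (\xi - t)^\alpha - (\xi - t^\prime)^\alpha \bigr] \leq C (t^\prime - t)^\alpha$, so that $\|\Delta_{t^\prime}(\xi)\|^2 \leq C^2 (t^\prime - t)^{1+\alpha} (\xi - t^\prime)^{-(1-\alpha)}$ and the remainder is $O((t^\prime - t)^\alpha)$ after integration against the weight, with no splitting needed for this term. With that correction your argument for (d), and hence the whole lemma, goes through.
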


    Taking relation \cref{semigroup_property} and \cref{lemma_a_mapping} into account, property (a) can be proved similarly to \cite[Lemma 5.1]{Gomoyunov_2023_JDE}.
    Further, properties (b) and (c) can be verified by repeating the proofs of \cite[Lemmas 5.2 and 5.3]{Gomoyunov_2023_JDE}.
    Property (d) can be proved by essentially the same arguments as \cite[Lemma 5.4]{Gomoyunov_2023_JDE}.
    The only change that needs to be made is the following observation.
    Let $x(\cdot) \in \mathcal{X}(t, w(\cdot))$, denote $\ell(\cdot) \doteq \ell(\cdot; T, x(\cdot))$, and fix $\tau \in (t, T)$, $\xi \in (\tau, T]$.
    Then, due to \cref{a} and based on the integration by parts formula, we obtain
    \begin{displaymath}
        \begin{aligned}
            & a(\xi; \tau, x_\tau(\cdot)) - a(\xi; t, w(\cdot))
            = \int_{t}^{\tau} K(\xi, \eta) \ell(\eta) \, \rd \eta
            = \int_{t}^{\tau} \frac{K_\ast(\xi, \eta) \ell(\eta)}{(\xi - \eta)^{1 - \alpha}} \, \rd \eta \\
            & \quad = \frac{1}{(\xi - \tau)^{1 - \alpha}} \int_{t}^{\tau} K_\ast(\xi, \zeta) \ell(\zeta) \, \rd \zeta
            - (1 - \alpha) \int_{t}^{\tau} \int_{t}^{\eta} \frac{K_\ast(\xi, \zeta) \ell(\zeta)}{(\xi - \eta)^{2 - \alpha}} \, \rd \zeta \, \rd \eta
        \end{aligned}
    \end{displaymath}
    and, therefore,
    \begin{displaymath}
        \begin{aligned}
            & \biggl\| a(\xi; \tau, x_\tau(\cdot)) - a(\xi; t, w(\cdot))
            - K(\xi, t) \int_{t}^{\tau} \ell(\zeta) \, \rd \zeta \biggr\| \\
            & \quad \leq \biggl\| a(\xi; \tau, x_\tau(\cdot)) - a(\xi; t, w(\cdot))
            - \frac{K_\ast(\xi, t)}{(\xi - \tau)^{1 - \alpha}} \int_{t}^{\tau} \ell(\zeta) \, \rd \zeta \biggr\| \\
            & \qquad + \kappa_\ast \|\ell(\cdot)\|_{L_\infty([0, T], \mathbb{R}^n)} (\tau - t) \biggl( \frac{1}{(\xi - \tau)^{1 - \alpha}} - \frac{1}{(\xi - t)^{1 - \alpha}} \biggr) \\
            & \quad \leq \frac{\lambda \|\ell(\cdot)\|_{L_\infty([0, T], \mathbb{R}^n)} (\tau - t)^{\beta + 1}}{(\beta + 1)(\xi - \tau)^{1 - \alpha}} \\
            & \qquad + 2 \kappa_\ast \| \ell(\cdot)\|_{L_\infty([0, T], \mathbb{R}^n)} (\tau - t) \biggl( \frac{1}{(\xi - \tau)^{1 - \alpha}} - \frac{1}{(\xi - t)^{1 - \alpha}} \biggr),
        \end{aligned}
    \end{displaymath}
    where $\kappa_\ast$ and $\lambda$, $\beta$ are taken from \cref{mu_ast} and \cref{assumption_K}, (b).
    Finally, property (e) can be proved in the same way as \cite[Lemma 5.5]{Gomoyunov_2023_JDE}.
    Note that this is the place where \cref{assumption_K}, (b), is important.

    Thanks to properties (a)--(e) of the functional $\nu_\varepsilon$, \cref{theorem_uniqueness} can be proved by repeating the arguments from the proof of \cite[Theorem 5.1]{Gomoyunov_2023_JDE}, where, in view of appearance of the exponent $\alpha \wedge \beta$ instead of $\alpha$ in property (e), the basic functional $\Phi_\varepsilon \colon \mathcal{G} \times \mathcal{G} \to \mathbb{R}$ should be defined by
    \begin{displaymath}
        \begin{aligned}
            & \Phi_\varepsilon \bigl( (t, w(\cdot)), (\tau, r(\cdot)) \bigr) \\
            & \quad \doteq \varphi_1(t, w(\cdot)) - \varphi_2(\tau, r(\cdot))
            - (2 T - t - \tau) \zeta - \frac{(t - \tau)^2}{\varepsilon^{\frac{3}{\alpha \wedge \beta}}}
            - \frac{\nu_\varepsilon \bigl( (t, w(\cdot)), (\tau, r(\cdot)) \bigr)}{\varepsilon}
        \end{aligned}
    \end{displaymath}
    for all $(t, w(\cdot))$, $(\tau, r(\cdot)) \in \mathcal{G}$.
    Here, $\varphi_1$ and $\varphi_2$ are two viscosity solutions of the Cauchy problem \cref{HJ,boundary_condition} and $\zeta > 0$ is a suitably chosen number.

    Directly from \cref{theorem_existence,theorem_uniqueness}, we derive
    \begin{theorem} \label{theorem_characterization}
        Suppose that \cref{assumption_f,assumption_K,assumption_sigma_chi,assumption_Isaacs,assumption_K_2} hold.
        Then, the lower $\rho_-$ and upper $\rho_+$ value functionals coincide.
        Moreover, the {\rm value functional}
        \begin{equation} \label{value_functional}
            \rho(t, w(\cdot))
            \doteq \rho_-(t, w(\cdot))
            = \rho_+(t, w(\cdot))
            \quad \forall (t, w(\cdot)) \in \mathcal{G}
        \end{equation}
        of the game \cref{system,cost_functional} is a unique viscosity solution of the Cauchy problem \cref{HJ,boundary_condition} with the Hamiltonian
        \begin{equation} \label{Hamiltonian}
            H(t, x, s)
            \doteq H_- (t, x, s)
            = H_+ (t, x, s)
            \quad \forall t \in [0, T], \ x, s \in \mathbb{R}^n
        \end{equation}
        and the boundary functional $\sigma$ from \cref{cost_functional}.
    \end{theorem}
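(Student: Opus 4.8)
The plan is to obtain \cref{theorem_characterization} as a direct consequence of \cref{theorem_existence,theorem_uniqueness}, the only substantive point being to check that, under \cref{assumption_Isaacs}, the lower and upper Hamiltonians collapse to a single Hamiltonian satisfying the hypotheses of \cref{theorem_uniqueness}.

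First I would note that \cref{assumption_Isaacs} is exactly the statement that $H_-(t, x, s) = H_+(t, x, s)$ for all $t \in [0, T]$ and $x$, $s \in \mathbb{R}^n$ (compare \cref{lower_Hamiltonian,upper_Hamiltonian}), so the function $H$ in \cref{Hamiltonian} is well-defined. Next I would verify that this $H$ meets the requirements of \cref{theorem_uniqueness}. Continuity of $H$ follows from continuity of $h$ (see \cref{h}), which is guaranteed by \cref{assumption_f}, (a), and \cref{assumption_sigma_chi}, (a), together with compactness of $P$ and $Q$: the inner extremum $\min_{u \in P} h(t, x, u, v, s)$ depends continuously on $(t, x, v, s)$, and then the outer extremum over the compact set $Q$ depends continuously on $(t, x, s)$. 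For the Lipschitz-type estimate, I would use that, for fixed $(t, x, u, v)$, the map $s \mapsto h(t, x, u, v, s) = \langle s, f(t, x, u, v) \rangle + \chi(t, x, u, v)$ is affine with slope $f(t, x, u, v)$, so that $|h(t, x, u, v, s) - h(t, x, u, v, s^\prime)| \leq \|f(t, x, u, v)\| \, \|s - s^\prime\| \leq c (1 + \|x\|) \|s - s^\prime\|$ by \cref{assumption_f}, (c); since this bound is uniform in $u \in P$ and $v \in Q$, it is inherited by $H_- = H_+ = H$ after taking the successive extrema.

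With this in hand, the argument concludes quickly. By \cref{theorem_existence}, the lower value functional $\rho_-$ is a viscosity solution of the Cauchy problem \cref{HJ,boundary_condition} with $H = H_-$ and boundary functional $\sigma$, and $\rho_+$ is a viscosity solution of the same problem with $H = H_+$; since $H_- = H_+ = H$, both $\rho_-$ and $\rho_+$ are viscosity solutions of one and the same Cauchy problem, namely \cref{HJ,boundary_condition} with the Hamiltonian \cref{Hamiltonian}. Because this Hamiltonian satisfies the hypotheses of \cref{theorem_uniqueness} and $K$ obeys \cref{assumption_K,assumption_K_2} by hypothesis, that Cauchy problem admits at most one viscosity solution. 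Therefore $\rho_- = \rho_+$ on $\mathcal{G}$, the value functional $\rho$ from \cref{value_functional} is well-defined, and it is the unique viscosity solution of the Cauchy problem \cref{HJ,boundary_condition} with Hamiltonian \cref{Hamiltonian}. Evaluating at $t = 0$ and using \cref{values_0}, one gets $\rho_-^0 = \rho_+^0$, i.e., the game \cref{system,cost_functional} has the value \cref{game_value}.

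I do not anticipate any real obstacle here: the statement is essentially a packaging of the two preceding theorems. The only step requiring genuine (if routine) care is the verification that the common Hamiltonian $H$ satisfies the continuity and Lipschitz hypotheses of \cref{theorem_uniqueness}; everything else is immediate.
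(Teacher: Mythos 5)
Your proposal is correct and follows essentially the same route as the paper, which also derives \cref{theorem_characterization} directly from \cref{theorem_existence,theorem_uniqueness} by noting that \cref{assumption_Isaacs} gives $H_-=H_+$ and that this common Hamiltonian satisfies the continuity and Lipschitz hypotheses of \cref{theorem_uniqueness} via \cref{assumption_f,assumption_sigma_chi}. Your verification of those hypotheses is just a fleshed-out version of what the paper states in one sentence.
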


    Indeed, it suffices to note that $H_-$ and $H_+$ coincide according to \cref{assumption_Isaacs} and satisfy the conditions from \cref{theorem_uniqueness} by \cref{assumption_f,assumption_sigma_chi}.

    Thus, in view of equalities \cref{values_0}, we conclude by \cref{theorem_characterization} that the game \cref{system,cost_functional} has the value $\rho^0 = \rho(0, w(\cdot))$, where $w(0) \doteq y(0)$.

\section{Positional strategies}
\label{section_positional}

    Following \cite{Krasovskii_Subbotin_1988,Subbotin_1995} (see also, e.g. \cite{Lukoyanov_2000_PMM_Eng,Gomoyunov_2021_Mathematics}), as a first player's {\it positional strategy} in the considered game \cref{system,cost_functional}, we mean an arbitrary mapping $U \colon \mathcal{G}^\circ \to P$.
    Let $\Delta$ be a {\it partition} of the time interval $[0, T]$, i.e.,
    \begin{displaymath}
        \Delta
        \doteq \{\tau_j\}_{j = 1}^p,
        \quad \tau_1 = 0, \quad \tau_j < \tau_{j + 1} \quad \forall j \in \overline{1, p - 1}, \quad \tau_p = T,
    \end{displaymath}
    where $p \in \mathbb{N}$, $p \geq 2$.
    The pair $(U, \Delta)$ is called a {\it control law} of the first player.
    This law together with a second player's control $v(\cdot) \in \mathcal{V}[0, T]$ uniquely generates the first player's control $u(\cdot) \in \mathcal{U}[0, T]$ (and, respectively, the corresponding motion $x(\cdot)$ of system \cref{system}) by the following step-by-step rule:
    \begin{equation} \label{control_law_first_player}
        u(\tau)
        \doteq U(\tau_j, x_{\tau_j}(\cdot))
        \quad \forall \tau \in [\tau_j, \tau_{j + 1}), \ j \in \overline{1, p - 1}
    \end{equation}
    and, formally, $u(T) \doteq u_\ast$ for some fixed $u_\ast \in P$.
    In other words, at every time $\tau_j$, $j \in \overline{1, p - 1}$, the first player measures the history $x_{\tau_j}(\cdot)$ of the motion $x(\cdot)$ on $[0, \tau_j]$ (see \cref{x_t}), computes the value $u_j \doteq U(\tau_j, x_{\tau_j}(\cdot))$, and then applies the constant control $u(\tau) \doteq u_j$ until $\tau_{j + 1}$, when a new measurement of the history is taken.
    Denote the corresponding value of the cost functional \cref{cost_functional} by $J((U, \Delta), v(\cdot))$.

    The mapping that assigns to each second player's control $v(\cdot) \in \mathcal{V}[0, T]$ the first player's control $u(\cdot) \in \mathcal{U}[0, T]$ formed by $(U, \Delta)$ is a non-anticipative strategy of the first player by construction.
    Hence, in view of \cref{lower_value,game_value}, we have
    \begin{displaymath}
        \sup_{v(\cdot) \in \mathcal{V}[0, T]} J((U, \Delta), v(\cdot))
        \geq \rho^0.
    \end{displaymath}
    In this connection, given a number $\zeta > 0$, a positional strategy of the first player $U$ is called $\zeta${\it-optimal} if there exists $\delta > 0$ such that, for any partition $\Delta$ with the diameter $\diam(\Delta) \doteq \max \{\tau_{j + 1} - \tau_j \colon \, j \in \overline{1, p - 1} \} \leq \delta$,
    \begin{displaymath}
        \sup_{v(\cdot) \in \mathcal{V}[0, T]} J ((U, \Delta), v(\cdot))
        \leq \rho^0 + \zeta.
    \end{displaymath}

    Similarly, a positional strategy of the second player is a mapping $V \colon \mathcal{G}^\circ \to Q$.
    Given a partition $\Delta$ and a first player's control $u(\cdot) \in \mathcal{U}[0, T]$, let $J(u(\cdot), (V, \Delta))$ be the value of the cost functional \cref{cost_functional} that corresponds to the case where the second player's control $v(\cdot) \in \mathcal{V}[0, T]$ is formed by the control law $(V, \Delta)$ as follows:
    \begin{displaymath}
        v(\tau)
        \doteq V(\tau_j, x_{\tau_j}(\cdot))
        \quad \forall \tau \in [\tau_j, \tau_{j + 1}), \ j \in \overline{1, p - 1}
    \end{displaymath}
    and, formally, $v(T) \doteq v_\ast$ for some fixed $v_\ast \in Q$.
    Due to \cref{upper_value,game_value}, we get
    \begin{displaymath}
        \inf_{u(\cdot) \in \mathcal{U}[0, T]} J(u(\cdot), (V, \Delta))
        \leq \rho^0.
    \end{displaymath}
    Then, for a number $\zeta > 0$, a positional strategy of the second player $V$ is called $\zeta$-optimal if there exists $\delta > 0$ such that, for any partition $\Delta$ with $\diam(\Delta) \leq \delta$,
    \begin{displaymath}
        \inf_{u(\cdot) \in \mathcal{U}[0, T]} J(u(\cdot), (V, \Delta))
        \geq \rho^0 - \zeta.
    \end{displaymath}

\section{Construction of optimal positional strategies}
\label{section_optimal_positional}

    Let us take the compact sets $\mathcal{X}_1$ and $\mathcal{G}_1$ (see \cref{X_k,G_k}) and, for every $t \in [0, T]$, denote by $\mathcal{G}_1(t)$ the set of functions $w(\cdot)$ such that $(t, w(\cdot)) \in \mathcal{G}_1$.

    For any $(t, w(\cdot)) \in \mathcal{G}$ and $\varepsilon \in (0, 1]$, recalling that the restriction of the value functional $\rho$ (see \cref{value_functional}) to the set $\mathcal{G}_1$ is continuous owing to \cref{lemma_continuity} and that the restriction of the Lyapunov--Krasovskii functional $\nu_\varepsilon$ (see \cref{nu}) to the set $\mathcal{G}_1 \times \mathcal{G}_1$ is continuous by \cref{lemma_nu_basic}, (a), we can introduce the values
    \begin{equation} \label{rho_varepsilon}
        \begin{aligned}
            \rho_\varepsilon^- (t, w(\cdot))
            & \doteq \min_{r(\cdot) \in \mathcal{G}_1(t)} \biggl( \rho(t, r(\cdot)) + \frac{\nu_\varepsilon \bigl( (t, w(\cdot)), (t, r(\cdot)) \bigr)}{\varepsilon} \biggr), \\
            \rho_\varepsilon^+ (t, w(\cdot))
            & \doteq \max_{r(\cdot) \in \mathcal{G}_1(t)} \biggl( \rho(t, r(\cdot)) - \frac{\nu_\varepsilon \bigl( (t, w(\cdot)), (t, r(\cdot)) \bigr)}{\varepsilon} \biggr)
        \end{aligned}
    \end{equation}
    and then choose functions $r_\varepsilon^-(\cdot; t, w(\cdot))$ and $r_\varepsilon^+(\cdot; t, w(\cdot))$ at which, respectively, the minimum and maximum in \cref{rho_varepsilon} are attained.

    For every $\varepsilon \in (0, 1]$, let us consider players' positional strategies $U_\varepsilon^0 \colon \mathcal{G}^\circ \to P$ and $V_\varepsilon^0 \colon \mathcal{G}^\circ \to Q$ such that
    \begin{equation} \label{U_circ}
        \begin{aligned}
            U_\varepsilon^0 (t, w(\cdot))
            & \in \argmin{u \in P} \max_{v \in Q}
            h \biggl(t, w(t), u, v, \frac{\boldsymbol{\nabla} \mu_\varepsilon^{(t, r_\varepsilon^-(\cdot; t, w(\cdot)))} (t, w(\cdot))}{\varepsilon} \biggr), \\
            V_\varepsilon^0 (t, w(\cdot))
            & \in \argmax{v \in Q} \min_{u \in P}
            h \biggl(t, w(t), u, v, - \frac{\boldsymbol{\nabla} \mu_\varepsilon^{(t, r_\varepsilon^+(\cdot; t, w(\cdot)))} (t, w(\cdot)) }{\varepsilon} \biggr)
        \end{aligned}
    \end{equation}
    for all $(t, w(\cdot)) \in \mathcal{G}^\circ$, where $h$ and $\boldsymbol{\nabla} \mu_\varepsilon$ are defined according to \cref{h,lemma_nu_properties_item_c_derivative_x}.

    \begin{theorem} \label{theorem_positional}
        Suppose that \cref{assumption_f,assumption_K,assumption_sigma_chi,assumption_Isaacs,assumption_K_2} hold.
        Then, for any $\zeta > 0$, there exists $\varepsilon_\ast \in (0, 1]$ such that, for every $\varepsilon \in (0, \varepsilon_\ast]$, the players' positional strategies $U_\varepsilon^0$ and $V_\varepsilon^0$ are $\zeta$-optimal.
    \end{theorem}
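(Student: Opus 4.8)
The plan is to argue by the extremal-aiming technique of Krasovskii and Subbotin \cite{Krasovskii_Subbotin_1988,Subbotin_1995}, in the form adapted to hereditary systems in \cite{Lukoyanov_2004_PMM_Eng} and, for fractional-order systems, in \cite{Gomoyunov_2021_Mathematics,Gomoyunov_2023_Motor}, using the smoothed functional $\rho_\varepsilon^-$ (for the first player) as a Lyapunov--Krasovskii function along motions of \cref{system}; the case of $V_\varepsilon^0$ is treated symmetrically, with $\rho_\varepsilon^+$ in place of $\rho_\varepsilon^-$ and with the representation $H = \max_{v}\min_{u} h$ from \cref{assumption_Isaacs}. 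First I would record the needed facts about $\rho_\varepsilon^-$. From \cref{lemma_nu_basic}, (a), one has $\rho_\varepsilon^-(t, w(\cdot)) \le \rho(t, w(\cdot))$ for every $(t, w(\cdot)) \in \mathcal{G}_1$ (put $r(\cdot) = w(\cdot)$ in \cref{rho_varepsilon}), and, combining \cref{lemma_continuity} with \cref{lemma_nu_basic}, (a), (c), and the boundedness of $\rho$ on the compact set $\mathcal{G}_1$, the quantity $\gamma(\varepsilon) \doteq \sup_{(t, w(\cdot)) \in \mathcal{G}_1} \bigl( \rho(t, w(\cdot)) - \rho_\varepsilon^-(t, w(\cdot)) \bigr)$ tends to $0$ as $\varepsilon \to 0^+$; in particular $\rho_\varepsilon^-(0, w(\cdot)) = \rho(0, w(\cdot)) = \rho^0$ with $w(0) = y(0)$ (the minimum in \cref{rho_varepsilon} at $t = 0$ is over a single function), and $\rho_\varepsilon^-(T, w(\cdot)) \ge \sigma(w(\cdot)) - \gamma(\varepsilon)$ for $w(\cdot) \in \mathcal{G}_1(T)$ by the boundary condition $\rho(T, \cdot) = \sigma$. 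Moreover, since $\nu_\varepsilon \bigl( (t, w(\cdot)), (t, r_\varepsilon^-(\cdot; t, w(\cdot))) \bigr) / \varepsilon = \rho_\varepsilon^-(t, w(\cdot)) - \rho \bigl( t, r_\varepsilon^-(\cdot; t, w(\cdot)) \bigr)$ is bounded on $\mathcal{G}_1$ uniformly in $\varepsilon$, we have $\nu_\varepsilon \le C \varepsilon$ there, whence \cref{lemma_nu_basic}, (b) and (e), give the smallness (of order $\varepsilon^{1 / q}$ in the weighted norm of \cref{lemma_nu_basic}, (b)) of $a(\cdot; t, w(\cdot)) - a(\cdot; t, r_\varepsilon^-(\cdot; t, w(\cdot)))$ and the bound $\bigl\| \boldsymbol{\nabla} \mu_\varepsilon^{(t, r_\varepsilon^-(\cdot; t, w(\cdot)))}(t, w(\cdot)) \bigr\| \le C \varepsilon^{(q - 1) / q}$, so that the costate $s_\varepsilon(t, w(\cdot)) \doteq \boldsymbol{\nabla} \mu_\varepsilon^{(t, r_\varepsilon^-(\cdot; t, w(\cdot)))}(t, w(\cdot)) / \varepsilon$ appearing in \cref{U_circ} satisfies $\| s_\varepsilon(t, w(\cdot)) \| \le C \varepsilon^{-1 / q}$ uniformly over $\mathcal{G}_1 \cap \mathcal{G}^\circ$.

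The core of the proof is a one-step estimate. Fix $\varepsilon \in (0, 1]$, a partition $\Delta = \{\tau_j\}_{j = 1}^{p}$, and an arbitrary $v(\cdot) \in \mathcal{V}[0, T]$, and let $x(\cdot)$ be the motion of \cref{system} generated by the control law $(U_\varepsilon^0, \Delta)$ against $v(\cdot)$; then $x(\cdot) \in \mathcal{X}_1$ and $(\tau, x_\tau(\cdot)) \in \mathcal{G}_1$ for all $\tau \in [0, T]$ by the invariance of $\mathcal{G}_1$. For $j \in \overline{1, p - 1}$ set $w^{(j)}(\cdot) \doteq x_{\tau_j}(\cdot)$, $r^{(j)}(\cdot) \doteq r_\varepsilon^-(\cdot; \tau_j, w^{(j)}(\cdot)) \in \mathcal{G}_1(\tau_j)$, $u_j \doteq U_\varepsilon^0(\tau_j, w^{(j)}(\cdot))$, and $s^{(j)} \doteq s_\varepsilon(\tau_j, w^{(j)}(\cdot))$. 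Applying the dynamic programming principle (\cref{proposition_DPP}) to $\rho = \rho_-$ at the auxiliary position $(\tau_j, r^{(j)}(\cdot))$ with $\theta = \tau_{j + 1}$, I would choose a nearly optimal non-anticipative strategy of the first player, feed it the actual $v(\cdot)$, and denote by $\bar{x}^{(j)}(\cdot) \in \mathcal{X}_1$ the resulting motion from $(\tau_j, r^{(j)}(\cdot))$, with control $\bar{u}^{(j)}(\cdot)$; this yields $\rho(\tau_{j + 1}, \bar{x}^{(j)}_{\tau_{j + 1}}(\cdot)) + \int_{\tau_j}^{\tau_{j + 1}} \chi(\tau, \bar{x}^{(j)}(\tau), \bar{u}^{(j)}(\tau), v(\tau)) \, \rd \tau \le \rho(\tau_j, r^{(j)}(\cdot)) + \eta_j$ for any preassigned $\eta_j > 0$. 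Bounding $\rho_\varepsilon^-(\tau_{j + 1}, x_{\tau_{j + 1}}(\cdot))$ from above via \cref{rho_varepsilon} at $r(\cdot) = \bar{x}^{(j)}_{\tau_{j + 1}}(\cdot) \in \mathcal{G}_1(\tau_{j + 1})$ and substituting $\rho(\tau_j, r^{(j)}(\cdot)) = \rho_\varepsilon^-(\tau_j, w^{(j)}(\cdot)) - \nu_\varepsilon \bigl( (\tau_j, w^{(j)}(\cdot)), (\tau_j, r^{(j)}(\cdot)) \bigr) / \varepsilon$, one reduces the one-step increment of $\tau \mapsto \rho_\varepsilon^-(\tau, x_\tau(\cdot)) + \int_0^\tau \chi \, \rd \tau$ to $\eta_j$ plus the increment over $[\tau_j, \tau_{j + 1}]$ of $\nu_\varepsilon \bigl( (\cdot, x_\cdot(\cdot)), (\cdot, \bar{x}^{(j)}_\cdot(\cdot)) \bigr) / \varepsilon$ plus the running-cost discrepancy $\int_{\tau_j}^{\tau_{j + 1}} \bigl[ \chi(\tau, x(\tau), u_j, v(\tau)) - \chi(\tau, \bar{x}^{(j)}(\tau), \bar{u}^{(j)}(\tau), v(\tau)) \bigr] \rd \tau$.

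To handle the $\nu_\varepsilon$-increment I would freeze one position at a time: on $[\tau_j, \tau_{j + 1}]$ the functionals $\mu_\varepsilon^{(\tau_j, r^{(j)}(\cdot))}$ and $\mu_\varepsilon^{(\tau_{j + 1}, x_{\tau_{j + 1}}(\cdot))}$ are $ci$-smooth with $\boldsymbol{\partial}_t \mu_\varepsilon = 0$ (\cref{lemma_nu_basic}, (d)), so by \cref{proposition_ci_smooth} and the symmetry of $\nu_\varepsilon$ (\cref{lemma_nu_basic}, (a)) this increment equals $\int_{\tau_j}^{\tau_{j + 1}} \langle \boldsymbol{\nabla} \mu_\varepsilon^{(\tau_j, r^{(j)}(\cdot))}(\tau, x_\tau(\cdot)), f(\tau, x(\tau), u_j, v(\tau)) \rangle \, \rd \tau + \int_{\tau_j}^{\tau_{j + 1}} \langle \boldsymbol{\nabla} \mu_\varepsilon^{(\tau_{j + 1}, x_{\tau_{j + 1}}(\cdot))}(\tau, \bar{x}^{(j)}_\tau(\cdot)), f(\tau, \bar{x}^{(j)}(\tau), \bar{u}^{(j)}(\tau), v(\tau)) \rangle \, \rd \tau$. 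Using the explicit formula \cref{lemma_nu_properties_item_c_derivative_x} for $\boldsymbol{\nabla} \mu_\varepsilon$, its continuity on $\mathcal{G}_1 \cap \mathcal{G}^\circ$ away from $t = T$, the identity $\boldsymbol{\nabla} \mu_\varepsilon^{(\tau_j, w^{(j)}(\cdot))}(\tau_j, r^{(j)}(\cdot)) = - \boldsymbol{\nabla} \mu_\varepsilon^{(\tau_j, r^{(j)}(\cdot))}(\tau_j, w^{(j)}(\cdot))$ (the second estimate in \cref{lemma_nu_basic}, (e), at coinciding times), and the smallness of the distances involved — controlled by the penalty and by $\diam(\Delta)$ — one replaces both gradients along the step by $\varepsilon s^{(j)}$ and $- \varepsilon s^{(j)}$, respectively, up to error terms; the running-cost discrepancy then combines with these to form the integrand $h(\tau, x(\tau), u_j, v(\tau), s^{(j)}) - h(\tau, \bar{x}^{(j)}(\tau), \bar{u}^{(j)}(\tau), v(\tau), s^{(j)})$ (with $h$ from \cref{h}). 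After normalizing the time and state arguments (using the Lipschitz properties of $f$, $\chi$, $H$ on compacts and the weighted-norm smallness of $x(\cdot) - \bar{x}^{(j)}(\cdot)$ on $[0, \tau_{j + 1}]$), the one-step increment is bounded above by $\eta_j + \int_{\tau_j}^{\tau_{j + 1}} \bigl( h(\tau_j, w^{(j)}(\tau_j), u_j, v(\tau), s^{(j)}) - H(\tau_j, w^{(j)}(\tau_j), s^{(j)}) \bigr) \rd \tau + E_j$, and the integrand here is nonpositive because $u_j$ realizes the outer minimum in \cref{U_circ} (so $\max_{v} h(\tau_j, w^{(j)}(\tau_j), u_j, v, s^{(j)}) = H_+(\tau_j, w^{(j)}(\tau_j), s^{(j)})$) and $H_+ = H$ by \cref{assumption_Isaacs}. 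Summing over $j \in \overline{1, p - 1}$, adding the cost on the last interval $[\tau_{p - 1}, T]$ (estimated crudely by $\| \chi \|_\infty \, \diam(\Delta)$ together with the modulus of continuity of $\rho_\varepsilon^-$ on $\mathcal{G}_1$), and using $\rho_\varepsilon^-(0, \cdot) = \rho^0$ and $\sigma(x(\cdot)) \le \rho_\varepsilon^-(T, x_T(\cdot)) + \gamma(\varepsilon)$, one arrives at $J((U_\varepsilon^0, \Delta), v(\cdot)) \le \rho^0 + \gamma(\varepsilon) + \sum_j \eta_j + \sum_j E_j$. Choosing $\eta_j \doteq \zeta(\tau_{j + 1} - \tau_j) / (4 T)$ and inspecting the $E_j$ — which consist of terms $\| s^{(j)} \| \, \omega(\diam(\Delta)) (\tau_{j + 1} - \tau_j)$ summing to $O(\varepsilon^{-1 / q}) \, \omega(\diam(\Delta))$ (with $\omega$ a common modulus of continuity of $f$ and of the motions in $\mathcal{X}_1$), terms of order $\varepsilon^{-1} (\diam(\Delta))^{\alpha \wedge \beta} (\tau_{j + 1} - \tau_j)$ from \cref{lemma_nu_basic}, (e), summing to $O(\varepsilon^{-1}) (\diam(\Delta))^{\alpha \wedge \beta}$, and Lipschitz-in-$w(\cdot)$ remainders for $\chi$ and the penalty — one concludes: given $\zeta > 0$, first pick $\varepsilon_\ast$ so that $\gamma(\varepsilon) \le \zeta / 4$ for $\varepsilon \le \varepsilon_\ast$, and then, for each fixed $\varepsilon \in (0, \varepsilon_\ast]$, pick $\delta > 0$ so that $\sum_j E_j \le \zeta / 2$ whenever $\diam(\Delta) \le \delta$; this gives $\sup_{v(\cdot) \in \mathcal{V}[0, T]} J((U_\varepsilon^0, \Delta), v(\cdot)) \le \rho^0 + \zeta$, i.e.\ $U_\varepsilon^0$ is $\zeta$-optimal. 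The argument for $V_\varepsilon^0$ is identical, with $\rho_\varepsilon^+$, the reversed inequalities, and $H = \max_v \min_u h$.

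The step I expect to be the main obstacle is the one-step estimate itself: precisely tracking how the penalty gradient $\boldsymbol{\nabla} \mu_\varepsilon$ along a step combines with the extremal choice in \cref{U_circ} so that the Hamiltonian $H$ appears with the correct sign, while keeping under control the cross-terms that compare the actual motion with the auxiliary target $r^{(j)}(\cdot)$ and with the nearly optimal comparison motion $\bar{x}^{(j)}(\cdot)$ — distances that are small only in the weighted norm of \cref{lemma_nu_basic}, (b), which forces the use of the weighted Lipschitz property of $\rho_\varepsilon^-$ rather than a plain one. The weakly-singular kernel is felt here through the exponent $\alpha \wedge \beta$ in \cref{lemma_nu_basic}, (e), and through the blow-up of $\boldsymbol{\nabla} \mu_\varepsilon$ as $t \to T$ (handled by treating the last interval separately); this, in turn, dictates the order of the limits, $\diam(\Delta) \to 0$ for each fixed $\varepsilon$ and only afterwards $\varepsilon \to 0^+$.
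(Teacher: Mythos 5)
Your overall scheme (extremal aiming with the smoothed functional $\rho_\varepsilon^-$ as a Lyapunov--Krasovskii function along the actual motion, one-step estimates, last interval near $T$ treated separately, $\delta$ chosen after $\varepsilon$) is the same as the paper's, but the construction of the comparison (guide) motion contains a genuine gap at the decisive step. You generate the guide $\bar{x}^{(j)}(\cdot)$ from the aiming point $(\tau_j, r^{(j)}(\cdot))$ by a nearly optimal non-anticipative strategy of the first player \emph{fed the actual control} $v(\cdot)$, and then claim that the guide's contribution $h\bigl(\tau_j, \cdot, \bar{u}^{(j)}(\tau), v(\tau), s^{(j)}\bigr)$ can be replaced from below by $H(\tau_j, \cdot, s^{(j)})$ up to small errors. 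This is false in general: for an arbitrary $\bar{u}^{(j)}(\tau)$ and the actual $v(\tau)$ one only has $h(\cdot, \bar{u}^{(j)}(\tau), v(\tau), s^{(j)}) \geq \min_{u \in P} h(\cdot, u, v(\tau), s^{(j)})$, and this minimum can be strictly (and uniformly) below $\max_{v \in Q}\min_{u \in P} h = H$ whenever $v(\tau)$ is far from the maximizer; the near-optimality of the non-anticipative strategy controls the value $\rho$ along the guide but gives no pointwise relation between $\bar{u}^{(j)}(\tau)$ and the costate $s^{(j)}$, so the increment of $\nu_\varepsilon/\varepsilon$ is simply not controlled. The paper avoids exactly this by choosing the \emph{constant extremal} control $v_j \in \argmax{v \in Q}\min_{u \in P} h(\tau_j, r^{[j]}(\tau_j), u, v, s^{(j)})$ for the guide and invoking the $u$-stability property (\cref{lemma_stability}, (a), a consequence of \cref{proposition_DPP} for constant $v$) to obtain $u^{[j]}(\cdot)$; then $h(\tau_j, r^{[j]}(\tau_j), u^{[j]}(\tau), v_j, s^{(j)}) \geq \min_{u} h(\cdot, u, v_j, s^{(j)}) = H(\tau_j, r^{[j]}(\tau_j), s^{(j)})$, which is what produces the cancellation against $h(\tau_j, x(\tau_j), u_j, v(\tau), s^{(j)}) \leq H(\tau_j, x(\tau_j), s^{(j)})$ coming from \cref{U_circ}. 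Your version of the one-step estimate therefore does not close.

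A secondary, but also substantive, issue is your bound $\|s_\varepsilon\| \leq C\varepsilon^{-1/q}$. The error you must absorb includes the Hamiltonian mismatch between the actual position and the aiming point, of the form $\lambda_H(1 + \|s^{(j)}\|)\,\|x(\tau_j) - r^{[j]}(\tau_j)\|$; the aiming distance is controlled only through the penalty (it is of order $\varepsilon^{1/q}$ in the weighted norm of \cref{lemma_nu_basic}, (b), and $\leq \varkappa(\varepsilon)$ in the sup norm), not through $\diam(\Delta)$, so with $\|s^{(j)}\| = O(\varepsilon^{-1/q})$ this product is $O(1)$ and cannot be made small by refining the partition. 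The paper resolves this by a bootstrapping step you skip: combining \cref{j_basic}-type inequalities with the weighted Lipschitz property of $\rho$ (property (b) of $\Phi$) and \cref{lemma_nu_basic}, (b) and (e), to get the \emph{uniform-in-$\varepsilon$} bound $\|\boldsymbol{\nabla}\mu_\varepsilon^{(\tau_j, r^{[j]}(\cdot))}(\tau_j, x_{\tau_j}(\cdot))\|/\varepsilon \leq C_3(\lambda C_2 + C_1^{(q-1)/q})$, which then dictates the choice of $\varkappa$ and $\varepsilon_\ast$ before $\delta$. Without both corrections --- the extremal constant $v_j$ with the stability property, and the uniform costate bound at the minimizer --- the proposed proof does not establish $\zeta$-optimality.
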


    Before proceeding with the proof of \cref{theorem_positional}, we establish some additional properties of the Lyapunov--Krasovskii functional $\nu_\varepsilon$ and the value functional $\rho$.
    \begin{lemma} \label{lemma_nu_advanced}
        The following statements hold.

        \noindent {\rm (a)}
            For any $\varepsilon \in (0, 1]$, the mapping below is continuous:
            \begin{equation} \label{mapping_nu_6}
                [0, T) \times \mathcal{X}_1 \times [0, T] \times \mathcal{X}_1
                \ni (\tau, x(\cdot), \tau^\prime, x^\prime(\cdot))
                \mapsto \boldsymbol{\nabla} \mu_\varepsilon^{(\tau^\prime, x^\prime_{\tau^\prime}(\cdot))} (\tau, x_\tau(\cdot))
                \in \mathbb{R}^n.
            \end{equation}

        \noindent {\rm (b)}
            Suppose that $\varepsilon \in (0, 1]$, $\theta \in (0, T)$, and $x(\cdot)$, $x^\prime(\cdot) \in \mathcal{X}_1$ and consider the function $\omega(\tau) \doteq \nu_\varepsilon((\tau, x_\tau(\cdot)), (\tau, x^\prime_\tau(\cdot)))$, $\tau \in [0, \theta]$.
            Then, $\omega(\cdot)$ is Lipschitz continuous and
            \begin{equation} \label{nu.7_main}
                \dot{\omega}(\tau)
                = \langle \boldsymbol{\nabla} \mu_\varepsilon^{(\tau, x^\prime_\tau(\cdot))} (\tau, x_\tau(\cdot)), \ell(\tau; T, x(\cdot)) - \ell(\tau; T, x^\prime(\cdot)) \rangle
                \quad \text{for a.e. } \tau \in [0, \theta],
            \end{equation}
            where $\ell(\cdot; T, x(\cdot))$ and $\ell(\cdot; T, x^\prime(\cdot))$ are determined by $x(\cdot)$ and $x^\prime(\cdot)$ according to \cref{representation_via_l_t_w}.

        \noindent {\rm (c)}
            For any $R > 0$ and $\varkappa > 0$, there exists $\varepsilon_\ast \in (0, 1]$ such that, for any $\varepsilon \in (0, \varepsilon_\ast]$, $t \in [0, T]$, and $w(\cdot)$, $w^\prime(\cdot) \in \mathcal{G}_1(t)$ satisfying the condition $\nu_\varepsilon((t, w(\cdot)), (t, w^\prime(\cdot))) \leq R \varepsilon$, the inequality $\|w(\cdot) - w^\prime(\cdot)\|_{C([0, t], \mathbb{R}^n)} \leq \varkappa$ is valid.
    \end{lemma}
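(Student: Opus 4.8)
The plan is to settle the three claims in turn, with (c) immediate, (a) a continuity-of-a-parameter-dependent-integral argument, and (b) the substantive one. Claim (c) I would obtain by compactness and contradiction. If it failed, there would be $R > 0$, $\varkappa > 0$, a sequence $\varepsilon_i \to 0^+$, times $t_i \in [0, T]$, and functions $w^{(i)}(\cdot)$, $w^{\prime(i)}(\cdot) \in \mathcal{G}_1(t_i)$ with $\nu_{\varepsilon_i}((t_i, w^{(i)}(\cdot)), (t_i, w^{\prime(i)}(\cdot))) \le R \varepsilon_i$ but $\|w^{(i)}(\cdot) - w^{\prime(i)}(\cdot)\|_{C([0, t_i], \mathbb{R}^n)} > \varkappa$. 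Since $\mathcal{G}_1$ is compact and both $|t_i - t_i| = 0 \to 0$ and $\nu_{\varepsilon_i}((t_i, w^{(i)}(\cdot)), (t_i, w^{\prime(i)}(\cdot))) \to 0$, \cref{lemma_nu_basic}, (c), yields $\dist((t_i, w^{(i)}(\cdot)), (t_i, w^{\prime(i)}(\cdot))) \to 0$; but by the definition of the metric on $\mathcal{G}$ this distance equals $\|w^{(i)}(\cdot) - w^{\prime(i)}(\cdot)\|_{C([0, t_i], \mathbb{R}^n)}$, a contradiction.

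For claim (a) I would work from the explicit formula \cref{lemma_nu_properties_item_c_derivative_x}. By \cref{lemma_a_mapping}, the mapping $(t, w(\cdot)) \mapsto a(\cdot; t, w(\cdot))$ is continuous on $\mathcal{G}_1$; composing with the continuous mapping \cref{mapping_basic} shows that $(\tau, x(\cdot)) \mapsto a(\cdot; \tau, x_\tau(\cdot))$ and $(\tau^\prime, x^\prime(\cdot)) \mapsto a(\cdot; \tau^\prime, x^\prime_{\tau^\prime}(\cdot))$ are continuous from $[0, T) \times \mathcal{X}_1$, respectively $[0, T] \times \mathcal{X}_1$, into $C([0, T], \mathbb{R}^n)$. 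Since $\mathcal{X}_1$ is compact these functions are uniformly bounded, so the numerators in \cref{lemma_nu_properties_item_c_derivative_x} are bounded; for a fixed $\varepsilon$ the denominators containing $\varepsilon^{2 / (q - 1)}$ are bounded away from zero; and the remaining factors $\|K(\xi, \tau)\| \le \kappa_\ast (\xi - \tau)^{-(1 - \alpha)}$ and $(T - \xi)^{-(1 - \alpha - \alpha^\prime) q}$ are integrable in $\xi$ over $[\tau, T]$, the latter because $q = 2 / (2 - \alpha)$ together with $\alpha^\prime < (1 - \alpha) \wedge (\alpha / 2)$ forces $(1 - \alpha - \alpha^\prime) q < 1$. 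Splitting the $\xi$-integral near the moving endpoint $\xi = \tau$, to cope with the moving weak singularity of $K(\xi, \tau)$, and invoking the dominated convergence theorem, one obtains continuity of mapping \cref{mapping_nu_6}; the leading term of \cref{lemma_nu_properties_item_c_derivative_x} is handled by continuity of $K(T, \cdot)$ off the diagonal. This reproduces the corresponding step of \cite{Gomoyunov_2023_JDE}.

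Claim (b) is the substantive part. Fixing $\theta^\prime \in (\theta, T)$ and, for $0 \le \tau < \tau^{\prime\prime} \le \theta$, using the symmetry of $\nu_\varepsilon$ from \cref{lemma_nu_basic}, (a), I would split
\begin{displaymath}
    \begin{aligned}
        \omega(\tau^{\prime\prime}) - \omega(\tau)
        & = \Bigl( \mu_\varepsilon^{(\tau^{\prime\prime}, x^\prime_{\tau^{\prime\prime}}(\cdot))}(\tau^{\prime\prime}, x_{\tau^{\prime\prime}}(\cdot)) - \mu_\varepsilon^{(\tau^{\prime\prime}, x^\prime_{\tau^{\prime\prime}}(\cdot))}(\tau, x_\tau(\cdot)) \Bigr) \\
        & \quad + \Bigl( \mu_\varepsilon^{(\tau, x_\tau(\cdot))}(\tau^{\prime\prime}, x^\prime_{\tau^{\prime\prime}}(\cdot)) - \mu_\varepsilon^{(\tau, x_\tau(\cdot))}(\tau, x^\prime_\tau(\cdot)) \Bigr).
    \end{aligned}
\end{displaymath}
Each bracket is the increment of a $ci$-smooth functional (\cref{lemma_nu_basic}, (d)) along the motion $x(\cdot)$, respectively $x^\prime(\cdot)$, so by \cref{proposition_ci_smooth} and $\boldsymbol{\partial}_t \mu_\varepsilon^{(\cdot)} \equiv 0$ the first bracket equals $\int_{\tau}^{\tau^{\prime\prime}} \langle \boldsymbol{\nabla} \mu_\varepsilon^{(\tau^{\prime\prime}, x^\prime_{\tau^{\prime\prime}}(\cdot))}(s, x_s(\cdot)), \ell(s; T, x(\cdot)) \rangle \, \rd s$ and the second equals $\int_{\tau}^{\tau^{\prime\prime}} \langle \boldsymbol{\nabla} \mu_\varepsilon^{(\tau, x_\tau(\cdot))}(s, x^\prime_s(\cdot)), \ell(s; T, x^\prime(\cdot)) \rangle \, \rd s$. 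Combining the bound of \cref{lemma_nu_basic}, (e), (with horizon $\theta^\prime$) with the uniform boundedness of $\nu_\varepsilon$ on the compact set $\mathcal{G}_1 \times \mathcal{G}_1$ from \cref{lemma_nu_basic}, (a), the two integrands are bounded uniformly over $s \in [\tau, \tau^{\prime\prime}]$ and $\tau^{\prime\prime} \le \theta^\prime$; since $\ell(\cdot; T, x(\cdot))$, $\ell(\cdot; T, x^\prime(\cdot)) \in L_\infty([0, T], \mathbb{R}^n)$, this gives Lipschitz continuity of $\omega(\cdot)$ on $[0, \theta]$. For the derivative formula, at any $\tau \in (0, \theta)$ where $\dot{\omega}(\tau)$ exists and which is a common Lebesgue point of $\ell(\cdot; T, x(\cdot))$ and $\ell(\cdot; T, x^\prime(\cdot))$ (almost every $\tau$), I would divide the displayed identity by $\tau^{\prime\prime} - \tau$ and let $\tau^{\prime\prime} \to \tau^+$: part (a), continuity of \cref{mapping_basic}, and compactness of $\mathcal{X}_1$ give $\boldsymbol{\nabla} \mu_\varepsilon^{(\tau^{\prime\prime}, x^\prime_{\tau^{\prime\prime}}(\cdot))}(s, x_s(\cdot)) \to \boldsymbol{\nabla} \mu_\varepsilon^{(\tau, x^\prime_\tau(\cdot))}(\tau, x_\tau(\cdot))$ and $\boldsymbol{\nabla} \mu_\varepsilon^{(\tau, x_\tau(\cdot))}(s, x^\prime_s(\cdot)) \to \boldsymbol{\nabla} \mu_\varepsilon^{(\tau, x_\tau(\cdot))}(\tau, x^\prime_\tau(\cdot))$ uniformly for $s \in [\tau, \tau^{\prime\prime}]$, while the Lebesgue-point property gives $(\tau^{\prime\prime} - \tau)^{-1} \int_\tau^{\tau^{\prime\prime}} \ell(s; T, x(\cdot)) \, \rd s \to \ell(\tau; T, x(\cdot))$ and similarly for $x^\prime(\cdot)$; finally the second inequality of \cref{lemma_nu_basic}, (e), at coincident times forces $\boldsymbol{\nabla} \mu_\varepsilon^{(\tau, x_\tau(\cdot))}(\tau, x^\prime_\tau(\cdot)) = - \boldsymbol{\nabla} \mu_\varepsilon^{(\tau, x^\prime_\tau(\cdot))}(\tau, x_\tau(\cdot))$, and adding the two limits yields \cref{nu.7_main}.

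The main obstacle is in (b): passing to the limit $\tau^{\prime\prime} \to \tau^+$ in the first bracket, where the anchor $(\tau^{\prime\prime}, x^\prime_{\tau^{\prime\prime}}(\cdot))$ of the gradient moves together with $\tau^{\prime\prime}$. This is precisely why the joint continuity of $\boldsymbol{\nabla} \mu_\varepsilon$ in both arguments supplied by part (a) is needed, and it is the anti-symmetry relation of \cref{lemma_nu_basic}, (e), at equal times that produces the cancellation collapsing the two brackets into the single expression \cref{nu.7_main}.
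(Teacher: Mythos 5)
Your proposal is correct and follows essentially the same route as the paper: the same symmetry-based decomposition of $\omega(\tau^{\prime\prime}) - \omega(\tau)$ into two increments handled by \cref{proposition_ci_smooth}, the same Lebesgue-point limit using the joint continuity from part (a), the anti-symmetry of the gradient at coincident times, and the reduction of (c) to \cref{lemma_nu_basic}, (c). The only differences are in minor sub-steps (the paper proves (a) via a quantitative Lipschitz estimate in the anchor variable combined with \cref{lemma_a_mapping} rather than your dominated-convergence argument, and it reads the uniform bound and the anti-symmetry in (b) off part (a) and formula \cref{lemma_nu_properties_item_c_derivative_x} rather than off \cref{lemma_nu_basic}, (e)), which are equivalent justifications.
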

    \begin{proof}
        Let us verify property (a).
        Consider the function
        \begin{displaymath}
            g(y)
            \doteq \frac{y}{(\varepsilon^{\frac{2}{q - 1}} + \|y\|^2)^{1 - \frac{q}{2}}}
            \quad \forall y \in \mathbb{R}^n
        \end{displaymath}
        and observe that it satisfies the estimate
        \begin{displaymath}
            \|g(y) - g(y^\prime)\|
            \leq \frac{3 - q}{\varepsilon^{\frac{2 - q}{q - 1}}} \|y - y^\prime\|
            \quad \forall y, y^\prime \in \mathbb{R}^n.
        \end{displaymath}
        Hence, in view of \cref{lemma_nu_properties_item_c_derivative_x}, for any $(t, w(\cdot)) \in \mathcal{G}^\circ$ and $(\tau, r(\cdot))$, $(\tau^\prime, r^\prime(\cdot)) \in \mathcal{G}$,
        taking $\kappa_\ast$ from \cref{mu_ast} and noting that (see, e.g., \cite[equality (A.6)]{Gomoyunov_2023_JDE})
        \begin{displaymath}
            \begin{aligned}
                \int_{t}^{T} \frac{\rd \xi}{(\xi - t)^{1 - \alpha} (T - \xi)^{(1 - \alpha - \alpha^\prime) q}}
                & = \mathrm{B}(\alpha, 1 - (1 - \alpha - \alpha^\prime) q) (T - t)^{\alpha - (1 - \alpha - \alpha^\prime) q} \\
                & \leq \frac{\mathrm{B}(\alpha, 1 - (1 - \alpha - \alpha^\prime) q) T^{1 - (1 - \alpha - \alpha^\prime) q}}{(T - t)^{1 - \alpha}},
            \end{aligned}
        \end{displaymath}
        where $\mathrm{B}$ is the Euler beta-function, we derive
        \begin{displaymath}
            \begin{aligned}
                & \| \boldsymbol{\nabla} \mu_\varepsilon^{(\tau, r(\cdot))} (t, w(\cdot))
                - \boldsymbol{\nabla} \mu_\varepsilon^{(\tau^\prime, r^\prime(\cdot))} (t, w(\cdot)) \| \\
                & \quad \leq \frac{q \kappa_\ast}{(T - t)^{1 - \alpha}} \bigl\| g\bigl( a(T; t, w(\cdot)) - a(T; \tau, r(\cdot)) \bigr) - g\bigl( a(T; t, w(\cdot)) - a(T; \tau^\prime, r^\prime(\cdot)) \bigr) \bigr\| \\
                & \qquad + q \kappa_\ast \int_{t}^{T} \frac{\bigl\| g \bigl( a(\xi; t, w(\cdot)) - a(\xi; \tau, r(\cdot)) \bigr) - g \bigl( a(\xi; t, w(\cdot)) - a(\xi; \tau^\prime, r^\prime(\cdot)) \bigr) \bigr\|}{(\xi - t)^{1 - \alpha} (T - \xi)^{(1 - \alpha - \alpha^\prime) q}}  \, \rd \xi \\
                & \quad \leq \frac{\lambda}{(T - t)^{1 - \alpha}} \| a(\cdot; \tau, r(\cdot)) - a(\cdot; \tau^\prime, r^\prime(\cdot)) \|_{C([0, T], \mathbb{R}^n)}
            \end{aligned}
        \end{displaymath}
        with
        \begin{displaymath}
            \lambda
            \doteq \frac{(3 - q) q \kappa_\ast}{\varepsilon^{\frac{2 - q}{q - 1}}}
            \Bigl( 1 + \mathrm{B}(\alpha, 1 - (1 - \alpha - \alpha^\prime) q) T^{1 - (1 - \alpha - \alpha^\prime) q} \Bigr).
        \end{displaymath}
        Due to \cref{lemma_a_mapping}, the obtained estimate implies that, for every $\theta \in (0, T)$, the family of mappings $\mathcal{G}_1 \ni (\tau, r(\cdot)) \mapsto \boldsymbol{\nabla} \mu_\varepsilon^{(\tau, r(\cdot))} (t, w(\cdot)) \in \mathbb{R}^n$ parameterized by $(t, w(\cdot)) \in \mathcal{G}^\circ$ with $t \leq \theta$ is uniformly equicontinuous.
        Together with continuity of the restriction of $\boldsymbol{\nabla} \mu_\varepsilon^{(\tau, r(\cdot))}$ to $\mathcal{G}_1 \cap \mathcal{G}^\circ$ for every $(\tau, r(\cdot)) \in \mathcal{G}$ (see \cref{lemma_nu_basic}, (d)), this yields continuity of the mapping $(\mathcal{G}_1 \cap \mathcal{G}^\circ) \times \mathcal{G}_1 \ni ((t, w(\cdot)), (\tau, r(\cdot))) \mapsto \boldsymbol{\nabla} \mu_\varepsilon^{(\tau, r(\cdot))} (t, w(\cdot)) \in \mathbb{R}^n$.
        Thus, recalling that mapping \cref{mapping_basic} is continuous, we get continuity of mapping \cref{mapping_nu_6}.

        Further, let us prove property (b).
        By property (a), there exists $M > 0$ such that $\| \boldsymbol{\nabla} \mu_\varepsilon^{(\tau^\prime, x^\prime_{\tau^\prime}(\cdot))} (\tau, x_\tau(\cdot))\| \leq M$ and $\| \boldsymbol{\nabla} \mu_\varepsilon^{(\tau, x_\tau(\cdot))} (\tau^\prime, x^\prime_{\tau^\prime}(\cdot))\| \leq M$ for all $\tau$, $\tau^\prime \in [0, \theta]$.
        Fix $\tau$, $\tau^\prime \in [0, \theta]$ with $\tau^\prime \geq \tau$.
        Then, in view of \cref{lemma_nu_basic}, (a), and \cref{psi_varepsilon}, we have
        \begin{displaymath}
            \begin{aligned}
                \omega(\tau^\prime) - \omega(\tau)
                & = \mu_\varepsilon^{(\tau^\prime, x^\prime_{\tau^\prime}(\cdot))} (\tau^\prime, x_{\tau^\prime}(\cdot))
                - \mu_\varepsilon^{(\tau^\prime, x^\prime_{\tau^\prime}(\cdot))} (\tau, x_\tau(\cdot)) \\
                & \quad + \mu_\varepsilon^{(\tau, x_\tau(\cdot))} (\tau^\prime, x^\prime_{\tau^\prime}(\cdot))
                - \mu_\varepsilon^{(\tau, x_\tau(\cdot))} (\tau, x^\prime_\tau(\cdot)).
            \end{aligned}
        \end{displaymath}
        Therefore, based on \cref{lemma_nu_basic}, (d), and applying \cref{proposition_ci_smooth}, we derive
        \begin{equation} \label{omega_difference}
            \begin{aligned}
                \omega(\tau^\prime) - \omega(\tau)
                & = \int_{\tau}^{\tau^\prime} \Bigl( \langle \boldsymbol{\nabla} \mu_\varepsilon^{(\tau^\prime, x^\prime_{\tau^\prime}(\cdot))} (\xi, x_\xi(\cdot)), \ell(\xi; T, x(\cdot)) \rangle \\
                & \quad + \langle \boldsymbol{\nabla} \mu_\varepsilon^{(\tau, x_\tau(\cdot))} (\xi, x^\prime_\xi(\cdot)), \ell(\xi; T, x^\prime(\cdot)) \rangle \Bigr) \, \rd \xi.
            \end{aligned}
        \end{equation}
        Hence,
        \begin{displaymath}
            |\omega(\tau^\prime) - \omega(\tau)|
            \leq M \bigl( \|\ell(\cdot; T, x(\cdot))\|_{L_\infty([0, T], \mathbb{R}^n)}
            + \|\ell(\cdot; T, x^\prime(\cdot))\|_{L_\infty([0, T], \mathbb{R}^n)} \bigr)
            (\tau^\prime - \tau),
        \end{displaymath}
        which implies that the function $\omega(\cdot)$ is Lipschitz continuous.

        Now, let $\mathcal{T}$ be the set of $\tau \in (0, \theta)$ such that the derivative $\dot{\omega}(\tau)$ exists and $\tau$ is a Lebesgue point of both functions $\ell(\cdot; T, x(\cdot))$ and $\ell(\cdot; T, x^\prime(\cdot))$.
        For every $\tau \in \mathcal{T}$, due to \cref{omega_difference} and using property (a), we obtain
        \begin{displaymath}
            \begin{aligned}
                \dot{\omega}(\tau)
                & = \lim_{\tau^\prime \to \tau^+} \frac{\omega(\tau^\prime) - \omega(\tau)}{\tau^\prime - \tau} \\
                & = \biggl\langle \boldsymbol{\nabla} \mu_\varepsilon^{(\tau, x^\prime_\tau(\cdot))} (\tau, x_\tau(\cdot)), \lim_{\tau^\prime \to \tau^+} \frac{1}{\tau^\prime - \tau}
                \int_{\tau}^{\tau^\prime} \ell(\xi; T, x(\cdot)) \, \rd \xi \biggr\rangle \\
                & \quad + \biggl\langle \boldsymbol{\nabla} \mu_\varepsilon^{(\tau, x_\tau(\cdot))} (\tau, x^\prime_\tau(\cdot)), \lim_{\tau^\prime \to \tau^+} \frac{1}{\tau^\prime - \tau}
                \int_{\tau}^{\tau^\prime} \ell(\xi; T, x^\prime(\cdot)) \, \rd \xi \biggr\rangle \\
                & = \langle \boldsymbol{\nabla} \mu_\varepsilon^{(\tau, x^\prime_\tau(\cdot))} (\tau, x_\tau(\cdot)), \ell(\tau; T, x(\cdot)) \rangle
                + \langle \boldsymbol{\nabla} \mu_\varepsilon^{(\tau, x_\tau(\cdot))} (\tau, x^\prime_\tau(\cdot)), \ell(\tau; T, x^\prime(\cdot)) \rangle.
            \end{aligned}
        \end{displaymath}
        As a result, and since $\boldsymbol{\nabla} \mu_\varepsilon^{(\tau, x_\tau(\cdot))} (\tau, x^\prime_\tau(\cdot)) = - \boldsymbol{\nabla} \mu_\varepsilon^{(\tau, x^\prime_\tau(\cdot))} (\tau, x_\tau(\cdot))$ by \cref{lemma_nu_properties_item_c_derivative_x} and the set $[0, \theta] \setminus \mathcal{T}$ has zero Lebesgue measure, we come to \cref{nu.7_main}.

        Finally, observe that property (c) follows directly from \cref{lemma_nu_basic}, (c).
    \end{proof}

    \begin{lemma} \label{lemma_stability}
        The following statements hold.

        \noindent {\rm (a)}
            For any $(t, w(\cdot)) \in \mathcal{G}$, $\theta \in [t, T]$, $v \in Q$, and $\eta > 0$, there exists $u(\cdot) \in \mathcal{U}[t, T]$ such that, for the system motion $x(\cdot) \doteq x(\cdot; t, w(\cdot), u(\cdot), v(\cdot))$ with $v(\tau) \doteq v$, $\tau \in [t, T]$,
            \begin{displaymath}
                \rho(\theta, x_\theta(\cdot)) + \int_{t}^{\theta} \chi(\tau, x(\tau), u(\tau), v) \, \rd \tau
                \leq \rho(t, w(\cdot)) + \eta.
            \end{displaymath}

        \noindent {\rm (b)}
            For any $(t, w(\cdot)) \in \mathcal{G}$, $\theta \in [t, T]$, $u \in P$, and $\eta > 0$, there exists $v(\cdot) \in \mathcal{V}[t, T]$ such that, for the system motion $x(\cdot) \doteq x(\cdot; t, w(\cdot), u(\cdot), v(\cdot))$ with $u(\tau) \doteq u$, $\tau \in [t, T]$,
            \begin{displaymath}
                \rho(\theta, x_\theta(\cdot)) + \int_{t}^{\theta} \chi(\tau, x(\tau), u, v(\tau)) \, \rd \tau
                \geq \rho(t, w(\cdot)) - \eta.
            \end{displaymath}
    \end{lemma}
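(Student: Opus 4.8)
The plan is to derive both statements directly from the dynamic programming principle (\cref{proposition_DPP}) combined with the identity $\rho = \rho_- = \rho_+$ supplied by \cref{theorem_characterization}, which is available because \cref{assumption_Isaacs} is in force in this section.

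For part~(a), I would fix $(t, w(\cdot)) \in \mathcal{G}$, $\theta \in [t, T]$, $v \in Q$, and $\eta > 0$. Invoking the first equality of \cref{proposition_DPP} and using $\rho = \rho_-$, there is a non-anticipative strategy $\boldsymbol{a} \in \boldsymbol{\mathcal{A}}[t, T]$ such that
\begin{displaymath}
    \rho\bigl(\theta, \hat{x}_\theta(\cdot)\bigr) + \int_{t}^{\theta} \chi\bigl(\tau, \hat{x}(\tau), \boldsymbol{a}[\hat{v}(\cdot)](\tau), \hat{v}(\tau)\bigr) \, \rd \tau
    \leq \rho(t, w(\cdot)) + \eta
    \quad \forall \hat{v}(\cdot) \in \mathcal{V}[t, T],
\end{displaymath}
where $\hat{x}(\cdot) \doteq x(\cdot; t, w(\cdot), \boldsymbol{a}[\hat{v}(\cdot)](\cdot), \hat{v}(\cdot))$. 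I would then specialize $\hat{v}(\cdot)$ to the constant control $v(\tau) \doteq v$, $\tau \in [t, T]$, which belongs to $\mathcal{V}[t, T]$, and set $u(\cdot) \doteq \boldsymbol{a}[v(\cdot)](\cdot)$; the corresponding motion $x(\cdot) \doteq x(\cdot; t, w(\cdot), u(\cdot), v(\cdot))$ then satisfies the asserted inequality.

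Part~(b) would be handled by the mirror-image argument: using the second equality of \cref{proposition_DPP} and $\rho = \rho_+$, I would pick a non-anticipative strategy $\boldsymbol{b} \in \boldsymbol{\mathcal{B}}[t, T]$ that is $\eta$-optimal for the upper value, supply it with the constant control $u(\tau) \doteq u$, $\tau \in [t, T]$, and take $v(\cdot) \doteq \boldsymbol{b}[u(\cdot)](\cdot)$. I do not anticipate any real obstacle here; the only things to keep in mind are that the constant maps $\tau \mapsto v$ and $\tau \mapsto u$ are admissible open-loop controls, and that the replacement of $\rho_-$ and $\rho_+$ by the single value functional $\rho$ relies on the Isaacs condition \cref{assumption_Isaacs}.
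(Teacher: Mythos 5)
Your argument is correct and is exactly the route the paper intends: it states that properties (a) and (b) follow "in a direct way" from the dynamic programming principle (\cref{proposition_DPP}) together with the identity $\rho = \rho_- = \rho_+$ from \cref{value_functional}, which is precisely your choice of an $\eta$-optimal non-anticipative strategy fed with the constant opponent control. No gaps; the only implicit point, which you already note, is that the passage from $\rho_\mp$ to $\rho$ uses \cref{theorem_characterization} under the Isaacs condition.
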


    In the positional differential games theory \cite{Krasovskii_Subbotin_1988,Subbotin_1995}, properties (a) and (b) are called $u$- and $v$-stability properties of the value functional $\rho$ respectively.
    These properties can be derived from \cref{value_functional,proposition_DPP} in a direct way.

    \begin{proof}[Proof of \cref{theorem_positional}]
        The proof mainly follows the scheme from, e.g., \cite{Garnysheva_Subbotin_1994_Eng} and \cite[Theorem 12.3]{Subbotin_1995} (see also, e.g., \cite[Theorem 1]{Lukoyanov_2004_PMM_Eng} and \cite[Theorem 1]{Gomoyunov_2023_Motor}).

        Fix $\zeta > 0$.
        Since the function $\chi$ is continuous by \cref{assumption_sigma_chi}, (a), there exists $M > 0$ such that $|\chi(\tau, x(\tau), u, v)| \leq M$ for all $\tau \in [0, T]$, $x(\cdot) \in \mathcal{X}_1$, $u \in P$, and $v \in Q$.
        By \cref{lemma_continuity}, the value functional $\rho$ belongs to $\Phi$.
        Hence, first, the restriction of $\rho$ to $\mathcal{G}_1$ is continuous, and, consequently, there exists $R > 0$ such that $2 |\rho(t, w(\cdot))| \leq R$, $(t, w(\cdot)) \in \mathcal{G}_1$; there exists $\varkappa_\ast > 0$ such that $|\rho(t, w(\cdot)) - \rho(t, w^\prime(\cdot))| \leq \zeta / 4$ for all $t \in [0, T]$ and $w(\cdot)$, $w^\prime(\cdot) \in \mathcal{G}_1(t)$ satisfying the condition $\|w(\cdot) - w^\prime(\cdot)\|_{C([0, t], \mathbb{R}^n)} \leq \varkappa_\ast$; and there exists $\theta \in (0, T)$ such that $|\rho(T, x(\cdot)) - \rho(\tau, x_\tau(\cdot))| + M (T - \tau) \leq \zeta / 4$ for all $\tau \in [\theta, T]$ and $x(\cdot) \in \mathcal{X}_1$.
        Second, we can find $\lambda > 0$ such that inequality \cref{varphi.2_inequality} with $\varphi$ replaced by $\rho$ is valid for all $t \in [0, T]$ and $w(\cdot)$, $w^\prime(\cdot) \in \mathcal{G}_1(t)$.
        Further, using \cref{assumption_f}, (b), and \cref{assumption_sigma_chi}, (b), and recalling definition \cref{Hamiltonian} of $H$, let us take $\lambda_H > 0$ such that, for any $t \in [0, T]$, $w(\cdot)$, $w^\prime(\cdot) \in \mathcal{G}_1(t)$, and $s \in \mathbb{R}^n$,
        \begin{equation} \label{lambda_H}
            |H(t, w(t), s) - H(t, w^\prime(t), s)|
            \leq \lambda_H (1 + \|s\|) \|w(t) - w^\prime(t)\|.
        \end{equation}
        Put $\varkappa \doteq \min\{ \varkappa_\ast, \zeta / (8 T \lambda_H (1 + C_3 (\lambda C_2 + C_1^{\frac{q - 1}{q}})))\}$, where $C_1$, $q$ are from \cref{nu}, $C_2$ is from \cref{lemma_nu_basic}, (b), and $C_3$ is from \cref{lemma_nu_basic}, (e), for $(T + \theta) / 2$ and $k = 1$.
        By $R$ and $\varkappa$, let us choose $\varepsilon_\ast \in (0, 1]$ based on \cref{lemma_nu_advanced}, (c).

        Fix $\varepsilon \in (0, \varepsilon_\ast]$.
        Let us show that the first player's positional control strategy $U_\varepsilon^0$ is $\zeta$-optimal; the proof for $V_\varepsilon^0$ is similar.
        In view of \cref{lemma_nu_advanced}, (a), and continuity of $h$ (see \cref{h}), there exists $\delta \in (0, (T - \theta) / 2]$ such that, for any $x(\cdot)$, $x^\prime(\cdot)$, $x^{\prime \prime}(\cdot) \in \mathcal{X}_1$, $u \in P$, $v \in Q$, and $\tau$, $\tau^\prime \in [0, (T + \theta) / 2]$ with $|\tau - \tau^\prime| \leq \delta$,
        \begin{displaymath}
            \begin{aligned}
                & \biggl|
                h \biggl( \tau, x^{\prime \prime}(\tau), u, v, \frac{\boldsymbol{\nabla} \mu_\varepsilon^{(\tau, x^\prime_\tau(\cdot))} (\tau, x_\tau(\cdot))}{\varepsilon} \biggr) \\
                & \quad - h \biggl( \tau^\prime, x^{\prime \prime}(\tau^\prime), u, v, \frac{\boldsymbol{\nabla} \mu_\varepsilon^{(\tau^\prime, x^\prime_{\tau^\prime}(\cdot))} (\tau^\prime, x_{\tau^\prime}(\cdot))}{\varepsilon} \biggr) \biggr|
                \leq \frac{\zeta}{16 T}.
            \end{aligned}
        \end{displaymath}
        Take a partition $\Delta$ with $\diam(\Delta) \leq \delta$ and consider a motion $x(\cdot)$ of system \cref{system} generated by the first player's control law $(U_\varepsilon^0, \Delta)$ together with a second player's control $v(\cdot) \in \mathcal{V}[0, T]$.
        Denote by $u(\cdot) \in \mathcal{U}[0, T]$ the corresponding control of the first player.
        According to the definition of $\zeta$-optimality, we need to verify that
        \begin{equation} \label{lemma_general_case_u_main}
            J(u(\cdot), v(\cdot))
            \leq \rho^0 + \zeta.
        \end{equation}

        Choose $m \in \overline{2, p}$ from the condition $\tau_{m - 1} < \theta \leq \tau_m$.
        Note that $\tau_m \leq (T + \theta) / 2$ since $\delta \leq (T - \theta) / 2$.
        Using the fact that the value functional $\rho$ satisfies the boundary condition \cref{boundary_condition} and the choice of $M$ and $\theta$, we derive
        \begin{equation} \label{J_first}
            \begin{aligned}
                J(u(\cdot), v(\cdot))
                & \leq \rho(T, x(\cdot)) + \int_{0}^{\tau_m} \chi(\tau, x(\tau), u(\tau), v(\tau)) \, \rd \tau
                + M (T - \tau_m) \\
                & \leq \rho(\tau_m, x_{\tau_m}(\cdot))
                + \int_{0}^{\tau_m} \chi(\tau, x(\tau), u(\tau), v(\tau)) \, \rd \tau
                + \frac{\zeta}{4}.
            \end{aligned}
        \end{equation}

        Denote $r^{[j]}(\cdot) \doteq r_\varepsilon^-(\cdot; \tau_j, x_{\tau_j}(\cdot))$, $j \in \overline{1, m}$.
        By \cref{rho_varepsilon} and \cref{lemma_nu_basic}, (a), we have
        \begin{displaymath}
            \rho(\tau_j, r^{[j]}(\cdot)) + \frac{\nu_\varepsilon\bigl( (\tau_j, x_{\tau_j}(\cdot)), (\tau_j, r^{[j]}(\cdot)) \bigr)}{\varepsilon}
            = \rho_\varepsilon^-(\tau_j, x_{\tau_j}(\cdot))
            \leq \rho(\tau_j, x_{\tau_j}(\cdot))
        \end{displaymath}
        and, therefore,
        \begin{equation} \label{j_basic}
            \frac{\nu_\varepsilon\bigl( (\tau_j, x_{\tau_j}(\cdot)), (\tau_j, r^{[j]}(\cdot)) \bigr)}{\varepsilon}
            \leq \rho(\tau_j, x_{\tau_j}(\cdot)) - \rho(\tau_j, r^{[j]}(\cdot)).
        \end{equation}
        Due to the choice of $R$, this inequality implies that $\nu_\varepsilon((\tau_j, x_{\tau_j}(\cdot)), (\tau_j, r^{[j]}(\cdot))) \leq R \varepsilon$, and, hence, by the choice of $\varepsilon_\ast$, we get
        \begin{equation} \label{x-z_r}
            \|x_{\tau_j}(\cdot) - r^{[j]}(\cdot)\|_{C([0, \tau_j], \mathbb{R}^n)}
            \leq \varkappa.
        \end{equation}
        In addition, according to the choice of $\lambda$ and $C_2$, we obtain
        \begin{displaymath}
            \rho(\tau_j, x_{\tau_j}(\cdot)) - \rho(\tau_j, r^{[j]}(\cdot))
            \leq \lambda C_2 \Bigl( \nu_\varepsilon \bigl( (\tau_j, x_{\tau_j}(\cdot)), (\tau_j, r^{[j]}(\cdot)) \bigr) + C_1 \varepsilon^{\frac{q}{q - 1}} \Bigr)^{\frac{1}{q}}.
        \end{displaymath}
        Then, taking into account that
        \begin{displaymath}
            \frac{C_1 \varepsilon^{\frac{q}{q - 1}}}{\varepsilon}
            = C_1^{\frac{q - 1}{q}} \bigl( C_1 \varepsilon^{\frac{q}{q - 1}} \bigr)^{\frac{1}{q}}
            \leq C_1^{\frac{q - 1}{q}} \Bigl( \nu_\varepsilon \bigl( (\tau_j, x_{\tau_j}(\cdot)), (\tau_j, r^{[j]}(\cdot)) \bigr)
            + C_1 \varepsilon^{\frac{q}{q - 1}} \Bigr)^{\frac{1}{q}},
        \end{displaymath}
        it follows from \cref{j_basic} that
        \begin{displaymath}
            \frac{\Bigl( \nu_\varepsilon \bigl( (\tau_j, x_{\tau_j}(\cdot)), (\tau_j, r^{[j]}(\cdot)) \bigr)
            + C_1 \varepsilon^{\frac{q}{q - 1}} \Bigr)^{\frac{q - 1}{q}}}{\varepsilon}
            \leq \lambda C_2 + C_1^{\frac{q - 1}{q}}.
        \end{displaymath}
        Recalling the choice of $C_3$, we derive $\|\boldsymbol{\nabla} \mu_\varepsilon^{(\tau_j, r^{[j]}(\cdot))} (\tau_j, x_{\tau_j}(\cdot))\| / \varepsilon \leq C_3 (\lambda C_2 + C_1^{\frac{q - 1}{q}})$.
        Consequently, by \cref{lambda_H}, \cref{x-z_r}, and the choice of $\varkappa$, the inequality below holds:
        \begin{equation} \label{H-H}
            \begin{aligned}
                & \biggl| H \biggl( \tau_j, x(\tau_j), \frac{\boldsymbol{\nabla} \mu_\varepsilon^{(\tau_j, r^{[j]}(\cdot))} (\tau_j, x_{\tau_j}(\cdot))}{\varepsilon} \biggr) \\
                & \quad - H \biggl( \tau, r^{[j]}(\tau_j), \frac{\boldsymbol{\nabla} \mu_\varepsilon^{(\tau_j, r^{[j]}(\cdot))} (\tau_j, x_{\tau_j}(\cdot))}{\varepsilon} \biggr) \biggr|
                \leq \frac{\zeta}{8 T}.
            \end{aligned}
        \end{equation}

        According to \cref{x-z_r} with $j \doteq m$, the inequality $\varkappa \leq \varkappa_\ast$, and \cref{lemma_nu_basic}, (a), we have
        $\rho(\tau_m, x_{\tau_m}(\cdot)) \leq \rho(\tau_m, r^{[m]}(\cdot)) + \zeta / 4 \leq \rho_\varepsilon^-(\tau_m, x_{\tau_m}(\cdot)) + \zeta / 4$.
        Therefore, observing that $\rho^0 = \rho(\tau_1, x_{\tau_1}(\cdot)) \geq \rho_\varepsilon^-(\tau_1, x_{\tau_1}(\cdot))$ and using \cref{J_first}, we conclude that, in order to verify \cref{lemma_general_case_u_main}, it remains to show that
        \begin{displaymath}
            \rho_\varepsilon^-(\tau_m, x_{\tau_m}(\cdot))
            + \int_{0}^{\tau_m} \chi(\tau, x(\tau), u(\tau), v(\tau)) \, \rd \tau
            - \rho_\varepsilon^-(\tau_1, x_{\tau_1}(\cdot))
            \leq \frac{\zeta}{2}.
        \end{displaymath}
        In its turn, to this end, it suffices to fix $j \in \overline{1, m - 1}$ and prove that
        \begin{equation} \label{inequality_step_main}
            \begin{aligned}
                & \rho_\varepsilon^-(\tau_{j + 1}, x_{\tau_{j + 1}}(\cdot))
                + \int_{\tau_j}^{\tau_{j + 1}} \chi(\tau, x(\tau), u(\tau), v(\tau)) \, \rd \tau
                - \rho_\varepsilon^-(\tau_j, x_{\tau_j}(\cdot)) \\
                & \quad \leq \frac{\zeta (\tau_{j + 1} - \tau_j)}{2 T}.
            \end{aligned}
        \end{equation}

        Let us choose
        \begin{displaymath}
            v_j \in \argmax{v \in Q} \min_{u \in P} h \biggl( \tau_j, r^{[j]}(\tau_j), u, v, \frac{\boldsymbol{\nabla} \mu_\varepsilon^{(\tau_j, r^{[j]}(\cdot))} (\tau_j, x_{\tau_j}(\cdot))}{\varepsilon} \biggr).
        \end{displaymath}
        Applying \cref{lemma_stability}, (a), we obtain that there exists $u^{[j]}(\cdot) \in \mathcal{U}[\tau_j, T]$ such that
        \begin{displaymath}
            \rho(\tau_{j + 1}, x^{[j]}_{\tau_{j + 1}}(\cdot))
            + \int_{\tau_j}^{\tau_{j + 1}} \chi(\tau, x^{[j]}(\tau), u^{[j]}(\tau), v_j) \, \rd \tau
            \leq \rho(\tau_j, r^{[j]}(\cdot)) + \frac{\zeta (\tau_{j + 1} - \tau_j)}{4 T},
        \end{displaymath}
        where $x^{[j]}(\cdot) \doteq x(\cdot; \tau_j, r^{[j]}(\cdot), u^{[j]}(\cdot), v^{[j]}(\cdot))$ and $v^{[j]}(\tau) \doteq v_j$, $\tau \in [\tau_j, T]$.
        Taking \cref{rho_varepsilon} into account and noting that $x^{[j]}_{\tau_{j + 1}}(\cdot) \in \mathcal{G}_1(\tau_{j + 1})$, we derive
        \begin{align*}
            & \rho_\varepsilon^-(\tau_{j + 1}, x_{\tau_{j + 1}}(\cdot))
            \leq \rho(\tau_{j + 1}, x^{[j]}_{\tau_{j + 1}}(\cdot))
            + \frac{\nu_\varepsilon \bigl( (\tau_{j + 1}, x_{\tau_{j + 1}}(\cdot)), (\tau_{j + 1}, x^{[j]}_{\tau_{j + 1}}(\cdot)) \bigr)}{\varepsilon} \\
            & \quad \leq \rho(\tau_j, r^{[j]}(\cdot))
            - \int_{\tau_j}^{\tau_{j + 1}} \chi(\tau, x^{[j]}(\tau), u^{[j]}(\tau), v_j) \, \rd \tau \\
            & \qquad + \frac{\nu_\varepsilon \bigl( (\tau_{j + 1}, x_{\tau_{j + 1}}(\cdot)), (\tau_{j + 1}, x^{[j]}_{\tau_{j + 1}}(\cdot)) \bigr)}{\varepsilon} + \frac{\zeta (\tau_{j + 1} - \tau_j)}{4 T} \\
            & \quad = \rho_\varepsilon^-(\tau_j, x_{\tau_j}(\cdot))
            - \frac{\nu_\varepsilon \bigl( (\tau_j, x_{\tau_j}(\cdot)), (\tau_j, x^{[j]}_{\tau_j}(\cdot)) \bigr)}{\varepsilon}
            - \int_{\tau_j}^{\tau_{j + 1}} \chi(\tau, x^{[j]}(\tau), u^{[j]}(\tau), v_j) \, \rd \tau \\
            & \qquad + \frac{\nu_\varepsilon \bigl( (\tau_{j + 1}, x_{\tau_{j + 1}}(\cdot)), (\tau_{j + 1}, x^{[j]}_{\tau_{j + 1}}(\cdot)) \bigr)}{\varepsilon} + \frac{\zeta (\tau_{j + 1} - \tau_j)}{4 T}.
        \end{align*}
        Thus, we find that, if we introduce the function
        \begin{displaymath}
            \begin{aligned}
                \omega(\tau)
                & \doteq \frac{\nu_\varepsilon \bigl( (\tau, x_\tau(\cdot)), (\tau, x^{[j]}_\tau(\cdot)) \bigr)}{\varepsilon} \\
                & \quad + \int_{\tau_j}^{\tau} \bigl( \chi(\xi, x(\xi), u(\xi), v(\xi))
                - \chi(\xi, x^{[j]}(\xi), u^{[j]}(\xi), v_j) \bigr) \, \rd \xi
                \quad \forall \tau \in [\tau_j, \tau_{j + 1}],
            \end{aligned}
        \end{displaymath}
        then, in order to prove \cref{inequality_step_main}, it remains to verify that
        \begin{equation} \label{kappa_main}
            \omega(\tau_{j + 1}) - \omega(\tau_j)
            \leq \frac{\zeta(\tau_{j + 1} - \tau_j)}{4 T}.
        \end{equation}

        In accordance with \cref{lemma_nu_advanced}, (b), the function $\omega(\cdot)$ is Lipschitz continuous and (see \cref{motion_arbitrary_ell,h})
        \begin{displaymath}
            \begin{aligned}
                \dot{\omega}(\tau)
                & = h \biggl( \tau, x(\tau), u(\tau), v(\tau), \frac{\boldsymbol{\nabla} \mu_\varepsilon^{(\tau, x^{[j]}_\tau(\cdot))} (\tau, x_\tau(\cdot))}{\varepsilon} \biggr) \\
                & \quad - h \biggl( \tau, x^{[j]}(\tau), u^{[j]}(\tau), v_j, \frac{\boldsymbol{\nabla} \mu_\varepsilon^{(\tau, x^{[j]}_\tau(\cdot))} (\tau, x_\tau(\cdot))}{\varepsilon} \biggr)
                \quad \text{for a.e. } \tau \in [\tau_j, \tau_{j + 1}].
            \end{aligned}
        \end{displaymath}
        Hence, due the choice of $\delta$, we have
        \begin{equation} \label{dot_kappa_1}
            \begin{aligned}
                \dot{\omega}(\tau)
                & \leq h \biggl( \tau_j, x(\tau_j), u(\tau), v(\tau), \frac{\boldsymbol{\nabla} \mu_\varepsilon^{(\tau_j, r^{[j]}(\cdot))} (\tau_j, x_{\tau_j}(\cdot))}{\varepsilon} \biggr) \\
                & \quad - h \biggl( \tau_j, r^{[j]}(\tau_j), u^{[j]}(\tau), v_j, \frac{\boldsymbol{\nabla} \mu_\varepsilon^{(\tau_j, r^{[j]}(\cdot))} (\tau_j, x_{\tau_j}(\cdot))}{\varepsilon} \biggr)
                + \frac{\zeta}{8 T}
            \end{aligned}
        \end{equation}
        for a.e. $\tau \in [\tau_j, \tau_{j + 1}]$.
        Since $u(\tau) = U_\varepsilon^0(\tau_j, x_{\tau_j}(\cdot))$ for all $\tau \in [\tau_j, \tau_{j + 1})$ by \cref{control_law_first_player}, using definitions \cref{U_circ} and \cref{Hamiltonian} of $U_\varepsilon^0$ and $H$ (see also \cref{upper_Hamiltonian}), we derive
        \begin{equation} \label{dot_kappa_2}
            \begin{aligned}
                & h \biggl( \tau_j, x(\tau_j), u(\tau), v(\tau), \frac{\boldsymbol{\nabla} \mu_\varepsilon^{(\tau_j, r^{[j]}(\cdot))} (\tau_j, x_{\tau_j}(\cdot))}{\varepsilon} \biggr) \\
                & \quad \leq H\biggl( \tau_j, x(\tau_j),
                \frac{\boldsymbol{\nabla} \mu_\varepsilon^{(\tau_j, r^{[j]}(\cdot))} (\tau_j, x_{\tau_j}(\cdot))}{\varepsilon} \biggr).
            \end{aligned}
        \end{equation}
        On the other hand, in a similar way, the choice of $v_j$ implies that (see also \cref{lower_Hamiltonian})
        \begin{equation} \label{dot_kappa_3}
            \begin{aligned}
                & h \biggl( \tau_j, r^{[j]}(\tau_j), u^{[j]}(\tau), v_j, \frac{\boldsymbol{\nabla} \mu_\varepsilon^{(\tau_j, r^{[j]}(\cdot))} (\tau_j, x_{\tau_j}(\cdot))}{\varepsilon} \biggr) \\
                & \quad \geq H\biggl( \tau_j, r^{[j]}(\tau_j),
                \frac{\boldsymbol{\nabla} \mu_\varepsilon^{(\tau_j, r^{[j]}(\cdot))} (\tau_j, x_{\tau_j}(\cdot))}{\varepsilon} \biggr).
            \end{aligned}
        \end{equation}
        From \cref{dot_kappa_1,dot_kappa_2,dot_kappa_3,H-H}, it follows that $\dot{\omega}(\tau) \leq \zeta / (4 T)$ for a.e. $\tau \in [\tau_j, \tau_{j + 1}]$, which yields \cref{kappa_main} and, thus, completes the proof.
    \end{proof}

    Thus, according to \cref{theorem_positional}, the players can ensure the achievement of the game value $\rho^0$ with any given accuracy $\zeta > 0$ by using the corresponding feedback control laws $(U^0_\varepsilon, \Delta)$ and $(V^0_\varepsilon, \Delta)$.
    In particular, at every time $\tau_j$, $j \in \overline{1, p - 1}$, of the partition $\Delta$, the players choose their controls $u(\tau)$ and $v(\tau)$ for $\tau \in [\tau_j, \tau_{j + 1})$ based on the information about the motion history $x_{\tau_j}(\cdot)$ on $[0, \tau_j]$ only.

    In this regard, it is appropriate to make one more comment on \cref{assumption_K_2}.
    Suppose that the free term $y(\cdot)$ and the kernel $K$ from the integral equation \cref{system} meet some additional smoothness conditions that provide the correctness of further reasoning.
    First of all, note that, if we differentiate equation \cref{system}, we get
    \begin{equation*}
        \dot{x}(\tau)
        = \dot{y}(\tau)
        + K(\tau, \tau) f(\tau, x(\tau), u(\tau), v(\tau))
        + \int_{0}^{\tau} \frac{\partial K(\tau, \xi)}{\partial \tau} f(\xi, x(\xi), u(\xi), v(\xi)) \, \rd \xi
    \end{equation*}
    for a.e. $\tau \in [0, T]$.
    Taking this into account, let us introduce an additional variable $z(\cdot)$ as a solution of the ordinary differential equation
    \begin{equation} \label{reduction_z}
        \dot{z}(\tau)
        = f(\tau, x(\tau), u(\tau), v(\tau))
        \quad \text{for a.e. } \tau \in [0, T]
    \end{equation}
    under the initial condition $z(0) = 0$.
    Then, using the integration by parts formula, we obtain
    \begin{equation} \label{reduction_x}
        \begin{aligned}
            \dot{x}(\tau)
            & = \dot{y}(\tau)
            + K(\tau, \tau) f(\tau, x(\tau), u(\tau), v(\tau)) \\
            & \quad + \frac{\partial K(\tau, \tau)}{\partial \tau} z(\tau)
            - \int_{0}^{\tau} \frac{\partial^2 K(\tau, \xi)}{\partial \xi \, \partial \tau} z(\xi) \, \rd \xi
            \quad \text{for a.e. } \tau \in [0, T].
        \end{aligned}
    \end{equation}
    As a result, we see that the integral equation \cref{system} reduces to the system of two differential equations \cref{reduction_x,reduction_z} under the initial conditions $z(0) = 0$ and $x(0) = y(0)$.
    Since the last term from the right-hand side of equation \cref{reduction_x} depends on the values $z(\xi)$ for all $\xi \in [0, \tau]$, the system \cref{reduction_x,reduction_z} can be classified as a time-delay system.
    So, in order to study the original game \cref{system,cost_functional}, the results of the differential game theory developed for such systems can be applied.
    In particular, we can derive the existence of $\zeta$-optimal players' feedback strategies that, at each time $\tau \in [0, T)$, depend on the values $z(\xi)$, $\xi \in [0, \tau]$, and $x(\tau)$.
    Nevertheless, it should be emphasized that the additional variable $z(\cdot)$ is introduced artificially, and it does not seem reasonable to assume that the players are able to know the required values $z(\xi)$, $\xi \in [0, \tau]$.
    At this point, \cref{assumption_K_2} comes to the fore.
    Namely, it allows us to extract the values $f(\xi, x(\xi), u(\xi), v(\xi))$ for a.e. $\xi \in [0, \tau]$ and, as a consequence, the values $z(\xi)$ for all $\xi \in [0, \tau]$ from the history $x_\tau(\cdot)$ of the motion $x(\cdot)$ of the original system \cref{system}, information about which is available to the players.
    Finally, note that, under the assumptions made in the paper, the above reduction of the integral equation \cref{system} to the system  \cref{reduction_x,reduction_z} is not applicable, which is mainly due to the fact that the kernel $K$ may have a singularity.

\section{Conclusion}
\label{section_conclusion}

    In this paper, we have considered the zero-sum game for the weakly-singular Volterra integral equation of Hammerstein type \cref{system} and the cost functional \cref{cost_functional}.
    Under the additional assumption on the kernel of this equation (see \cref{assumption_K_2}), we have proved that this game has the value and proposed the method for constructing players' optimal positional strategies.
    To this end, we have developed the approach from the differential games theory that is based on the dynamic programming principle and involves the study of the corresponding Hamilton--Jacobi equations and their viscosity solutions.
    In particular, this required introducing the new class of path-dependent Hamilton--Jacobi equations with the special coinvariant derivatives and establishing some results on viscosity solutions of such equations.

    The paper seems useful for further investigations of zero-sum games for Volterra integral equations.
    In particular, it is of interest to obtain another characterizations of the value functional that are more convenient for verifying compared to the proposed definition of a viscosity solution (for example, in terms of suitable directional derivatives).
    This would open up opportunities for finding the value functional and constructing players' optimal positional strategies in specific examples.

\end{document}